\newtheorem{theorem}{Theorem}
\newtheorem{proposition}{Proposition}
\newtheorem{lemma}{Lemma}
\newtheorem{corollary}{Corollary}
\newtheorem{problem}{Problem}
\newproof{proof}{Proof}
\newdefinition{remark}{Remark}
\numberwithin{equation}{section}
\newcommand\restr[2]{{
		\left.\kern-\nulldelimiterspace 
		#1 
		\vphantom{\big|} 
		\right|_{#2} 
}}
\journal{The Journal of Differential Equations}
\begin{document}

\begin{frontmatter}



\title{A non-local reduction principle for cocycles in Hilbert spaces\tnoteref{thanks}}


\author{Mikhail Anikushin}
\tnotetext[thanks]{This work is supported by V.~A. Rokhlin grant for young mathematicians of Saint Petersburg.}
\address{Department of
	Applied Cybernetics, Faculty of Mathematics and Mechanics,
	Saint-Petersburg State University, Universitetskiy prospekt 28, Peterhof, Saint-Petersburg, Russia, 198504}
\ead{demolishka@gmail.com}

\begin{abstract}
We study cocycles (non-autonomous dynamical systems) satisfying a certain squeezing condition with respect to the quadratic form of a bounded self-adjoint operator acting in a Hilbert space. We prove that (under additional assumptions) the orthogonal projector maps the fibres of some invariant set, containing bounded trajectories, in a one-to-one manner onto the negative subspace of the operator. This allows to reduce interesting dynamics onto this invariant set, which in some cases can be considered as a kind of inertial manifold for the cocycle. We consider applications of the reduction principle for periodic cocycles. For such cocycles we give an extension of the Massera second theorem, obtain the conditions for the existence of a Lyapunov stable periodic trajectory and prove convergence-type results, which we apply to study nonlinear periodic in time delayed-feedback equations posed in a proper Hilbert space and parabolic problems with a nonlinear periodic in time boundary control. The required operator is obtained as a solution to certain operator inequalities with the use of the Yakubovich-Likhtarnikov frequency theorem for $C_{0}$-semigroups and its properties are established from the Lyapunov inequality and dichotomy of the linear part of the problem.
\end{abstract}



\begin{keyword}
Reduction principle \sep Periodic cocycles \sep Squeezing property \sep Frequency theorem \sep Monotone dynamics
\end{keyword}

\end{frontmatter}


\section{Introduction}
We start with the precise statements of our main results.

Suppose $\mathcal{Q}$ is a metric space and $\vartheta^{t} \colon \mathcal{Q} \to \mathcal{Q}$, $t \in \mathbb{R}$, is a dynamical system (=flow) on $\mathcal{Q}$. A \textit{cocycle} (in a separable Hilbert space $\mathbb{H}$) over the dynamical system $\left( \mathcal{Q}, \{ \vartheta^{t} \} \right)$ is a family of maps $\psi^{t} \colon \mathcal{Q} \times \mathbb{H}  \to \mathbb{H}$, $t \geq 0$, satisfying the following properties \cite{Cheban2020Book}:
\begin{enumerate}
	\item $\psi^{0}(q,u) = u$ for every $u \in \mathbb{H}, q \in \mathcal{Q}$.
	\item $\psi^{t+s}(q,u)=\psi^{t}(\vartheta^{s}(q),\psi^{s}(q,u))$ for all $u \in \mathbb{H}, q \in \mathcal{Q}$ and $t,s \geq 0$.
	\item The map $\mathbb{R}_{+} \times \mathcal{Q} \times \mathbb{H} \to \mathbb{H}$ defined as $(t,q,u) \mapsto \psi^{t}(q,u)$ is continuous.
\end{enumerate}

For the sake of brevity, we denote the cocycle by $(\psi, \vartheta)$. We will use the following main conditions imposed on the cocycle.

\begin{enumerate}
	\item[\textbf{(H1)}] There is a bounded linear operator $P \colon \mathbb{H} \to \mathbb{H}$, self-adjoint ($P=P^{*}$) and such that $\mathbb{H}$ splits into the direct sum of orthogonal $P$-invariant subspaces $\mathbb{H}^{+}$ and $\mathbb{H}^{-}$, i.~e. $\mathbb{H} = \mathbb{H}^{+} \oplus \mathbb{H}^{-}$, such that $\restr{P}{\mathbb{H}^{-}} < 0$ and $\restr{P}{\mathbb{H}^{+}}>0$.
	\item[\textbf{(H2)}] For some integer $j>0$ we have $\dim \mathbb{H}^{-} = j$.
	\item[\textbf{(H3)}] For $V(u):=(Pu,u)$ and some numbers $\delta>0$, $\nu>0$ we have
	\begin{multline}
	\label{EQ: SqueezingProperty}
	e^{2\nu r}V(\psi^{r}(q,u)-\psi^{r}(q,v))-e^{2\nu l} V(\psi^{l}(q,u)-\psi^{l}(q,v)) \leq \\ \leq -\delta\int_{l}^{r} e^{2\nu s} |\psi^{s}(q,u)-\psi^{s}(q,v)|^{2}ds,
	\end{multline}
	for every $u,v \in \mathbb{H}$, $q \in \mathcal{Q}$ and $0 \leq l \leq r$.
\end{enumerate}

\begin{remark}
	Suppose \textbf{(H1)} and \textbf{(H2)} are satisfied and $\mathbb{H}$ is real. Then the set $\mathcal{C}:=\{ u \in \mathbb{H} \ | \ (Pu,u) \leq 0 \}$ is called a $j$-\textit{dimensional quadratic cone}\footnote{The set $\mathcal{C}$ is a cone of rank $j$ in the terminology of \cite{FengWangWu2017}.} in $\mathbb{H}$. For every $u \in \mathbb{H}$ we write $u=(u^{+},u^{-})$, where $u^{+} \in \mathbb{H}^{+}$ and $u^{-} \in \mathbb{H}^{-}$. If $j=1$ and $e \in \mathbb{H}^{-}$ is non-zero we define $\mathcal{C}^{+}:=\{ u=(u^{+},u^{-}) \in \mathcal{C} \ | \ u^{-}=\alpha e, \alpha \geq 0 \}$. It can be shown that $\mathcal{C}^{+}$ is a closed convex cone in $\mathbb{H}$ and, therefore, it defines a partial order: $u \leqslant v$ iff $v-u \in \mathcal{C}^{+}$. In this case inequality \eqref{EQ: SqueezingProperty} implies that the cocycle is monotone (in the sense of the theory of monotone dynamical systems \cite{Cheban2019,Cheban2020Book}) w. r. t. the partial order given by $\mathcal{C}^{+}$. For $j>1$ there is no natural partial order due to the lack of convexity, but a pseudo-order can be defined and this also leads to some limitations for the dynamics \cite{FengWangWu2017}. However, when the quadratic cone $\mathcal{C}$ is obtained from a solution to certain operator inequalities (as in our case), the property in \textbf{(H3)} arises naturally. This lead to some topological conclusions (see below), which seem unreachable if we only consider the abstract monotonicity w.~r.~t. the pseudo-order as in \cite{FengWangWu2017}.
\end{remark}

A \textit{complete trajectory} is a continuous map $u \colon \mathbb{R} \to \mathbb{H}$ such that for some $q \in \mathcal{Q}$ the equality $u(t+s)=\psi^{t}(\vartheta^{s} (q),u(s))$ holds for all $t \geq 0$ and $s \in \mathbb{R}$. In this case we say that $u(\cdot)$ is \textit{passing through $u(0)$ at $q$}.

Within \textbf{(H3)} a complete trajectory $u(\cdot)$ of the cocycle is called \textit{amenable} if
\begin{equation}
\int\limits_{-\infty}^{0} e^{2\nu s}|u(s)|^{2} ds < \infty.
\end{equation}

Define $\mathfrak{A}(q)$ as the set of all $u_{0} \in \mathbb{H}$ such that there exists an amenable trajectory of the cocycle passing through $u_{0}$ at $q$. We call the set $\mathfrak{A}(q)$ the \textit{amenable set at $q$}. We consider the following assumption.

\begin{enumerate}
	\item[\textbf{(H4)}] For any $q \in \mathcal{Q}$ there exists at least one amenable trajectory at $q$, i.~e. the set $\mathfrak{A}(q)$ is non-empty.
\end{enumerate}

\noindent In applications, a bounded solution of some equation plays the role of an amenable trajectory required in \textbf{(H4)} (since $\nu>0$ all bounded trajectories are amenable).

We also make use of some compactness-like properties.

\begin{enumerate}
	\item[\textbf{(COM1)}] The operator $P$ from \textbf{(H1)} is compact.
\end{enumerate}

The assumption \textbf{(COM1)} makes the quadratic form of $P$ (that is $V(\cdot)$ from \textbf{(H3)}) continuous on bounded sets endowed with the weak topology of $\mathbb{H}$. It can be shown that within \textbf{(H1)} and \textbf{(H2)} the mentioned continuity of the quadratic form is equivalent to the compactness of $P$. A somewhat different compactness property is the following.

\begin{enumerate}
	\item[\textbf{(COM2)}] There exists $t_{com}>0$ such that the map $\psi^{t_{com}}(q,\cdot) \colon \mathbb{H} \to \mathbb{H}$ is compact for every $q \in \mathcal{Q}$.
\end{enumerate}

Compactness-like assumption \textbf{(COM2)} described in terms of the cocycle seems to be more convenient as it simplifies the proof of Theorem \ref{TH: SmithTheorem} and can be checked for cocycles generated by parabolic and delayed problems. Due to a smoothing effect in parabolic problems we may have both assumptions \textbf{(COM1)} and \textbf{(COM2)} satisfied (see \cite{Likhtarnikov1976, Lions1971} and Section \ref{SEC: Parabolic}). It seems that the compactness of $P$ does not hold in delayed problems (see \cite{Anikushin2019+OnCom}) in which the smoothing effect is delayed, but the corresponding evolution operator can still be compact for sufficiently large times (see \cite{Faheem1987,Webb1976} and Section \ref{SEC: DelayedSystems}). However, both assumptions lead to the compactness of bounded semi-trajectories (see Corollary \ref{COR: CompatTrajectories}), which we also use as a weaker hypothesis.
\begin{enumerate}
	\item[\textbf{(COM3)}] Any bounded semi-trajectory of the cocycle is compact.
\end{enumerate}

Consider the orthogonal projector $\Pi \colon \mathbb{H} \to \mathbb{H}^{-}$ onto $\mathbb{H}^{-}$. The following theorem is a generalization of a result of R.~A.~Smith (see Theorem 8 in \cite{Smith1986} or Theorem 5 in \cite{Smith1994}) and it is one of main results of the present paper.
\begin{theorem}
	\label{TH: SmithTheorem}
	Suppose that \textbf{(H1)}-\textbf{(H4)} and one of \textbf{(COM1)} or \textbf{(COM2)} hold. Then the map $\Pi_{q} := \restr{\Pi}{\mathfrak{A}(q)} \colon \mathfrak{A}(q) \to \mathbb{H}^{-}$ is a homeomorphism.
\end{theorem}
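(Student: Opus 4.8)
The plan is to prove that $\Pi_q$ is a continuous bijection with continuous inverse by establishing injectivity, surjectivity, and then invoking a topological argument to upgrade the set-theoretic bijection into a homeomorphism. The heart of the matter lies in extracting geometric information from the squeezing inequality \eqref{EQ: SqueezingProperty}.

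\emph{Injectivity.} First I would show $\Pi_q$ is injective. Suppose $u_0, v_0 \in \mathfrak{A}(q)$ have the same projection onto $\mathbb{H}^-$, so that $\Pi(u_0 - v_0) = 0$, i.~e. $u_0 - v_0 \in \mathbb{H}^+$. Let $u(\cdot)$ and $v(\cdot)$ be amenable trajectories through $u_0$ and $v_0$ at $q$. Since by \textbf{(H1)} we have $\restr{P}{\mathbb{H}^+} > 0$, it follows that $V(u_0 - v_0) \geq 0$. The strategy is to run the squeezing estimate \eqref{EQ: SqueezingProperty} \emph{backwards in time} using amenability. Applying the inequality on $[l, 0]$ for $l \leq 0$ (via the complete-trajectory structure, choosing base points along $\vartheta^l(q)$) gives $V(u(0) - v(0)) \leq e^{2\nu l} V(u(l) - v(l))$ up to a sign, and I would combine this with the amenability bound $\int_{-\infty}^0 e^{2\nu s}|u(s) - v(s)|^2\,ds < \infty$ to force the difference to vanish. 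Concretely, if $V(u_0 - v_0) \geq 0$, integrating backwards and using that the right-hand side is non-positive should trap $V(u(l)-v(l))$ between contradictory bounds unless $u(s) \equiv v(s)$ on $(-\infty, 0]$, hence $u_0 = v_0$.

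\emph{Surjectivity.} This is where I expect the main obstacle, and where the compactness hypotheses \textbf{(COM1)}/\textbf{(COM2)} enter. Given any target $\zeta \in \mathbb{H}^-$, I must produce an amenable trajectory whose value at $t=0$ projects to $\zeta$. The natural approach is a continuation/degree argument: by \textbf{(H4)} the set $\mathfrak{A}(q)$ is non-empty, and by injectivity $\Pi_q$ is a continuous injection of $\mathfrak{A}(q)$ into the $j$-dimensional space $\mathbb{H}^- \cong \mathbb{R}^j$. I would aim to show that $\Pi_q(\mathfrak{A}(q))$ is simultaneously open (via an inverse-function or domain-invariance type argument, showing nearby targets are hit by perturbing the amenable trajectory) and closed (via compactness: a sequence of amenable trajectories whose projections converge, having uniformly controlled amenable norms from the squeezing inequality, is precompact, and the limit is again an amenable trajectory). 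Since $\mathbb{H}^-$ is connected, openness and closedness of the non-empty image give surjectivity. The compactness of bounded semi-trajectories (Corollary \ref{COR: CompatTrajectories}, which follows from either \textbf{(COM1)} or \textbf{(COM2)}) is essential for the closedness step, allowing passage to the limit.

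\emph{Homeomorphism.} Finally, continuity of $\Pi_q$ is immediate as a restriction of the bounded projector $\Pi$. For the continuity of the inverse, the cleanest route is invariance of domain: $\Pi_q$ is a continuous injection from $\mathfrak{A}(q)$ into $\mathbb{R}^j$, and once surjectivity onto the $j$-dimensional manifold $\mathbb{H}^-$ is known, Brouwer's invariance of domain theorem (applicable because both the image and—after transferring structure—the domain are $j$-dimensional) yields that $\Pi_q$ is an open map, hence a homeomorphism. Alternatively, one argues directly that $\Pi_q^{-1}$ is continuous by a compactness-plus-uniqueness argument: if $\Pi(u_0^{(n)}) \to \Pi(u_0)$ but $u_0^{(n)} \not\to u_0$, extract a convergent subsequence of the associated trajectories using \textbf{(COM3)} and the amenability estimates, and show the limit is an amenable trajectory projecting to $\Pi(u_0)$, contradicting injectivity. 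The decisive quantitative input throughout is that \eqref{EQ: SqueezingProperty} controls the full trajectory difference by the $\mathbb{H}^-$-data, converting the finite-dimensionality of $\mathbb{H}^-$ into rigidity of the amenable set.
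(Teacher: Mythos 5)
Your injectivity argument and your ``compactness-plus-uniqueness'' route to continuity of the inverse are sound, and they essentially reproduce what the paper does in Lemma \ref{LEM: AmenableLemma} and Lemma \ref{LEM: PiIsHomeo} (the paper's inverse-continuity argument is in fact compactness-free: the squeezing estimate bounds $\int_{-\infty}^{0}e^{2\nu s}|u^{*}(s)-v^{*}(s)|^{2}ds$ by the distance of the projections, and the cocycle's continuity propagates smallness forward to $t=0$). The genuine gap is in your surjectivity step, which is the real content of the theorem. You propose to prove openness of $\Pi_{q}(\mathfrak{A}(q))$ by applying invariance of domain (or an inverse-function-type argument) to $\Pi_{q}$ itself. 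But Brouwer's theorem applies to continuous injections defined on open subsets of $\mathbb{R}^{j}$ (or on $j$-manifolds), and $\mathfrak{A}(q)$ is merely a subset of the infinite-dimensional space $\mathbb{H}$: it carries no a priori $j$-dimensional structure --- endowing it with one is precisely what the theorem asserts, so the argument is circular. The same circularity undermines your final step, where you claim that surjectivity plus invariance of domain ``after transferring structure'' makes $\Pi_{q}$ open: a continuous bijection onto $\mathbb{H}^{-}$ need not be a homeomorphism unless the domain is already known to be a $j$-manifold. Nor is there any implementable mechanism behind ``perturbing the amenable trajectory'' to hit nearby targets: amenability is an asymptotic constraint at $-\infty$, the cocycle only flows forward, and there is no differentiable structure available for an inverse function theorem.

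What is missing is the paper's device for manufacturing trajectories with a prescribed projection. For finite times one considers $G_{T_{1}}^{T_{2}}(\zeta):=\Pi\psi^{T_{2}-T_{1}}(\vartheta^{T_{1}}(q),v+\zeta)$, $\zeta\in\mathbb{H}^{-}$, which is a continuous injection of $\mathbb{H}^{-}$ into itself (injectivity follows from \textbf{(H3)} because the initial difference $\zeta_{1}-\zeta_{2}$ lies in $\mathbb{H}^{-}$, where $V\leq 0$); here invariance of domain applies legitimately, since domain and codomain are both $\mathbb{H}^{-}\cong\mathbb{R}^{j}$, and an open-plus-closed argument shows each $G_{T_{1}}^{T_{2}}$ is a self-homeomorphism of $\mathbb{H}^{-}$ (Lemma \ref{LEM: OperatorG}). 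Given a target $\zeta$, the paper then sets $v_{k}(t):=\psi^{t+k}\left(\vartheta^{-k}(q),\left(G_{-k}^{0}\right)^{-1}(\zeta;q,u^{*}(-k))\right)$, anchored at the amenable trajectory $u^{*}$ from \textbf{(H4)}; each $v_{k}$ satisfies $\Pi v_{k}(0)=\zeta$ and $V(v_{k}(t)-u^{*}(t))\leq 0$, and the squeezing inequality gives a uniform bound on $\int_{-k}^{0}e^{2\nu s}|v_{k}(s)-u^{*}(s)|^{2}ds$, which supplies the boundedness hypotheses of the compactness lemma (Lemma \ref{LEM: GeneralCompactnessLemma}, the only place where \textbf{(COM1)}/\textbf{(COM2)} enter). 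The limit of a subsequence is a complete trajectory $v^{*}$ with $V(u^{*}(t)-v^{*}(t))\leq 0$, hence amenable by part 2) of Lemma \ref{LEM: AmenableLemma}, and $\Pi v^{*}(0)=\zeta$. Your closedness idea is in the same spirit as this limiting step, but without the $G$-maps there is nothing to make the image open, so the connectedness argument you rely on never gets off the ground.
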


Without \textbf{(COM1)} or \textbf{(COM2)} we only know that the map $\Pi_{q}$ provides a homeomorphism onto a subset of $\mathbb{H}^{-}$ (Lemma \ref{LEM: PiIsHomeo} below). In the finite-dimensional case the map $\Pi_{q}$ is a bi-Lipschitz homeomorphism between $\mathfrak{A}(q)$ and $\mathbb{H}^{-}$ \cite{Smith1986}. This is due to the fact that the positive operator $\restr{P}{\mathbb{H}^{+}}$ generates an equivalent scalar product on $\mathbb{H}_{+}$. In the case of infinite dimensions the equivalence does not hold (that can be easily seen if $P$ is compact). 

In certain cases the family of sets $\mathfrak{A}(q)$, $q \in \mathcal{Q}$, can be considered as a kind of inertial manifold for the cocycle (see Section \ref{SEC: TheMapPhi}). See also Section 6 of \cite{Smith1994} for a comparison with the inertial manifolds theory.

The conclusion of Theorem \ref{TH: SmithTheorem} (or its weak version, Lemma \ref{LEM: PiIsHomeo}) is convenient if we are looking for an embedding of invariant sets into some finite-dimensional space. It allows, for example, to sharp and extend the estimates of M.~L.~Cartwright \cite{Cartwright1969} of additional frequencies in the Fourier spectrum of almost periodic solutions (see \cite{KuzLeoReit2019}, where this is done for almost periodic ODEs). Topological structure of amenable sets $\mathfrak{A}(q)$ stated in Theorem \ref{TH: SmithTheorem} can be used to extend delicate theorems of V.~V.~Zhikov for almost periodic ODEs in low dimensions (see Chapter VII in \cite{LevitanZhikov1982}) to high-dimensional and infinite-dimensional cases. We will deal with these generalizations somewhere else. In the case of semi-flows, Theorem \ref{TH: SmithTheorem} (with $j=2$) can be used to prove an analog of the Poincar\'{e}-Bendixson theorem and obtain conditions for the existence of an orbitally stable periodic orbit (see \cite{Anik2020PB}).

In this paper we concentrate on applications of the reduction principle for studying abstract periodic cocycles\footnote{A cocycle $(\psi,\vartheta)$ is called $\sigma$-periodic if $\mathcal{Q}=\mathcal{S}^{1}_{\sigma}:=\mathbb{R}/\sigma\mathbb{Z}$ and $\vartheta^{t}(\theta):=\theta+t$ for $\theta \in \mathcal{S}^{1}_{\sigma}$ and $t \in \mathbb{R}$. See Section \ref{SEC: AbstractPerCoc}.}. In this direction we generalize all the main results from \cite{Smith1986,Smith1990} (for periodic ODEs and RFDEs) to our abstract context. Comparing with the proofs of similar theorems in \cite{Smith1986,Smith1990}, our proofs are more topological (the mentonied results of R.~A. Smith rely on some exponential estimates and topological arguments are mixed with them). Namely, we obtain the following convergence theorem.

\begin{theorem}
	\label{TH: ConvergenceTh}
	Suppose that the cocycle $(\psi,\vartheta)$ is $\sigma$-periodic and let \textbf{(H1)}, \textbf{(H2)} with $j=1$, \textbf{(H3)} and \textbf{(COM3)} hold. Then for any bounded semi-trajectory $u(t):=\psi^{t}(q,u_{0})$, where $t \geq 0$, there exists a $\sigma$-periodic trajectory $u^{*}$ at $q$ such that $u(t)-u^{*}(t) \to 0$ as $t \to +\infty$.
\end{theorem}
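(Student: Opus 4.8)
The plan is to pass to the time-$\sigma$ Poincar\'{e} map and to analyse its $\omega$-limit set through the order generated by the cone $\mathcal{C}$. Since the cocycle is $\sigma$-periodic we have $\vartheta^{\sigma}=\mathrm{id}$, so $\Phi:=\psi^{\sigma}(q,\cdot)\colon\mathbb{H}\to\mathbb{H}$ is a well-defined continuous map and $x_{n}:=\psi^{n\sigma}(q,u_{0})=\Phi^{n}(u_{0})$. A $\sigma$-periodic trajectory at $q$ is precisely a complete trajectory through a fixed point of $\Phi$: if $\Phi(p^{*})=p^{*}$ then $u^{*}(t):=\psi^{t}(q,p^{*})$ is $\sigma$-periodic and complete. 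First I would note that by \textbf{(COM3)} the bounded semi-trajectory $\{u(t)\mid t\ge 0\}$ is precompact, hence so is the discrete orbit $\{x_{n}\}$; let $\omega\subseteq\mathbb{H}$ be its $\omega$-limit set under $\Phi$, which is then non-empty, compact and satisfies $\Phi(\omega)=\omega$. A standard backward-extension argument (using precompactness together with the cocycle property) shows that every point of $\omega$ lies on a complete bounded trajectory at $q$; since $\nu>0$ such trajectories are amenable, so $\omega\subseteq\mathfrak{A}(q)$.

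Next I would record the two order-theoretic facts that drive the argument, both consequences of the squeezing \eqref{EQ: SqueezingProperty}. Because $j=1$, the cone $\mathcal{C}^{+}$ has non-empty interior (e.g. $e\in\mathrm{int}\,\mathcal{C}^{+}$). Moreover, for $u\neq v$, applying \eqref{EQ: SqueezingProperty} on $[0,\sigma]$ gives $e^{2\nu\sigma}V(\Phi v-\Phi u)<V(v-u)\le 0$, so $V(\Phi v-\Phi u)<0$; the $\mathbb{H}^{-}$-component of $\Phi v-\Phi u$ cannot vanish (otherwise $\Phi v-\Phi u\in\mathbb{H}^{+}$, where $V\ge 0$). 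Hence $\Phi$ is \emph{strongly} monotone: $u<v$ implies $\Phi v-\Phi u\in\mathrm{int}\,\mathcal{C}^{+}$. Second, for any two amenable trajectories $u(\cdot),v(\cdot)$ at $q$ the function $t\mapsto e^{2\nu t}V(u(t)-v(t))$ is non-increasing by \eqref{EQ: SqueezingProperty}; were it positive at some $t_{0}$ it would stay positive for $t\le t_{0}$, forcing $e^{2\nu t}|u(t)-v(t)|^{2}$ to be bounded below by a positive constant on $(-\infty,t_{0}]$ and contradicting amenability. Thus $V(u(t)-v(t))\le 0$ for all $t$, so the difference lies in $\mathcal{C}=\mathcal{C}^{+}\cup(-\mathcal{C}^{+})$. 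Consequently $\mathfrak{A}(q)$ — and in particular $\omega$ — is \emph{totally} ordered by $\leqslant$, and $\Pi_{q}$ identifies it order-isomorphically with a subset of $\mathbb{H}^{-}\cong\mathbb{R}$ (this is the structure already exploited in Lemma \ref{LEM: PiIsHomeo}).

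The crux — and the step I expect to be the main obstacle — is to show that $\omega$ reduces to a single point. Here I would invoke the Nonordering Principle for limit sets of strongly monotone discrete dynamical systems: no two distinct points of $\omega$ can be related by the strict order $<$. The strict decrease in \eqref{EQ: SqueezingProperty} (with $\delta>0$) is exactly what supplies the strong monotonicity required for this principle. Combining nonordering with the total ordering of $\omega$ established above then forces the conclusion: any two distinct points of $\omega$ would be comparable, hence strictly ordered, which is impossible; therefore $\omega=\{p^{*}\}$ is a singleton. By invariance $\Phi(p^{*})=p^{*}$, so $p^{*}$ is a fixed point and $u^{*}(t):=\psi^{t}(q,p^{*})$ is the desired $\sigma$-periodic trajectory at $q$. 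As a more hands-on route that localises the difficulty, one may split on the sign of $V(x_{n+1}-x_{n})$: if it is $\le 0$ for some $n$, then $x_{n}$ and $x_{n+1}$ are $\leqslant$-comparable, monotonicity makes $\{x_{n}\}$ eventually monotone, and precompactness forces convergence to a fixed point; if instead $V(x_{n+1}-x_{n})>0$ for all $n$, then \eqref{EQ: SqueezingProperty} yields $\int_{0}^{\infty}e^{2\nu s}|u(s+\sigma)-u(s)|^{2}\,ds<\infty$, whence $x_{n+1}-x_{n}\to 0$ and every point of $\omega$ is a fixed point — and one is reduced to ruling out a non-trivial totally ordered continuum of fixed points, which is again precisely the nonordering issue.

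Finally I would upgrade the discrete convergence $x_{n}\to p^{*}$ to the continuous statement. Writing $t=n\sigma+\tau$ with $\tau\in[0,\sigma]$ and using $\vartheta^{n\sigma}(q)=q$, we have $u(t)=\psi^{\tau}(q,x_{n})$ and $u^{*}(t)=\psi^{\tau}(q,p^{*})$; the joint continuity of the cocycle on the compact set $[0,\sigma]\times\{q\}\times\overline{\{x_{n}\}}$ then gives $\sup_{\tau\in[0,\sigma]}|\psi^{\tau}(q,x_{n})-\psi^{\tau}(q,p^{*})|\to 0$, that is, $u(t)-u^{*}(t)\to 0$ as $t\to+\infty$, as claimed.
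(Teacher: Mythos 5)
Your strategy is sound, and it is genuinely different from the paper's. The paper stays in continuous time: Lemma \ref{LEM: ConvergenceLemma} gives $u(t)-u(t+\sigma)\to 0$ for bounded semi-trajectories, Lemma \ref{LEM: OmegaLimitSet} identifies the fibre $\omega_{q}(\gamma_{0})$ of the $\omega$-limit set with a closed segment of $\mathbb{R}$ consisting of $\sigma$-periodic trajectories, and a non-degenerate segment is then excluded by a stability dichotomy: an interior point would be a non-isolated periodic trajectory, hence Lyapunov stable by Lemma \ref{LEM: LyapunovInstabilityLemma}, and a Lyapunov stable periodic trajectory lying in an $\omega$-limit set must exhaust it. You instead discretize to the Poincar\'{e} map and exclude a non-degenerate limit set by nonordering for strongly monotone maps. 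Each route buys something: yours, once complete, runs under \textbf{(COM3)} alone, exactly as the theorem is stated, whereas the paper's own proof invokes Lemma \ref{LEM: LyapunovInstabilityLemma}, which is formulated (through its use of Lemma \ref{LEM: GeneralCompactnessLemma}) under \textbf{(COM1)} or \textbf{(COM2)}; the paper's route, in turn, builds machinery that is reused in Theorems \ref{TH: StablePeriodicSmith} and \ref{TH: MasseraTheorem}, which your shortcut does not provide.

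However, you cannot leave the nonordering step as a citation, and you correctly sense that it is the crux: Hirsch's nonordering theorem is a result about \emph{semiflows}, and for discrete-time strongly monotone systems it is not an off-the-shelf textbook fact; both of your routes collapse onto it. It is true in your setting, and the proof is short enough that you should include it. Let $a\neq b$ belong to $\omega$; as you showed, distinct points of $\mathfrak{A}(q)$ satisfy $V(b-a)<0$, so we may assume $b-a\in\operatorname{int}\mathcal{C}^{+}$. Choose $n$ and $p>0$ with $x_{n}$ so close to $b$ and $x_{n+p}$ so close to $a$ that $x_{n}-x_{n+p}\in\operatorname{int}\mathcal{C}^{+}$. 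Monotonicity of $\Phi$ propagates this: $x_{j+p}\leqslant x_{j}$ for all $j\geq n$, so along each arithmetic progression of step $p$ the orbit is decreasing. Since $\mathcal{C}^{+}$ is closed and pointed ($\mathcal{C}^{+}\cap(-\mathcal{C}^{+})=\{0\}$), a precompact decreasing sequence has a unique limit point and hence converges; therefore $\omega$ consists of at most $p$ points, cyclically permuted by $\Phi$. A cycle of a strictly order-preserving map cannot contain two ordered points (iterating $c<\Phi^{d}(c)$ around the cycle yields $c<c$ by transitivity), contradicting $a\ll b$. Separately, your derivation of strong monotonicity is incomplete: $V(\Phi v-\Phi u)<0$ together with a non-vanishing $\mathbb{H}^{-}$-component only places $\Phi v-\Phi u$ in $\operatorname{int}\mathcal{C}^{+}\cup\left(-\operatorname{int}\mathcal{C}^{+}\right)$; to pin down the sign, observe that the scalar function $s\mapsto\Pi\bigl(\psi^{s}(q,v)-\psi^{s}(q,u)\bigr)$ is continuous on $[0,\sigma]$, positive at $s=0$ (if $\Pi(v-u)=0$ then $v-u\in\mathbb{H}^{+}$ and $V(v-u)>0$, impossible for $v-u\in\mathcal{C}^{+}\setminus\{0\}$), and never vanishes for $s>0$ because $V<0$ there; hence it stays positive. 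With these two insertions your argument is complete, including the final continuity upgrade, which coincides with the paper's.
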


Below we present some applications of Theorem \ref{TH: ConvergenceTh} for periodic cocycles generated by certain delay and parabolic equations. Note that results of convergence even in the case of almost periodic cocycles require (in addition to some low-dimensionality of fibres) certain stability assumptions (see \cite{Cheban2019,Cheban2020Book} and references therein).

A bounded closed subset $\mathcal{S}_{0} \subset \mathbb{H}$ is called a \textit{sink} for the cocycle $(\psi,\vartheta)$ if there exists an open set $\mathcal{G} \supset \mathcal{S}_{0}$ such that for every $u_{0} \in \mathcal{G}$ and $q \in \mathcal{Q}$ there exists $T = T(q, u_{0}) \geq 0$ such that $\psi^{t}(q,u_{0}) \in \mathcal{S}_{0}$ for all $t \geq T$.

\begin{theorem}
	\label{TH: StablePeriodicSmith}
	Suppose that the cocycle $(\psi,\vartheta)$ is $\sigma$-periodic and let \textbf{(H1)}, \textbf{(H2)} with $j=1$, \textbf{(H3)} and one of \textbf{(COM1)} or \textbf{(COM2)} hold. If a bounded closed set $\mathcal{S}_{0}$ is a \textit{sink} for the cocycle $(\psi,\vartheta)$ then there exists a Lyapunov stable $\sigma$-periodic trajectory in $\mathcal{S}_{0}$.
\end{theorem}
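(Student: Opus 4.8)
The plan is to reduce the problem to the one-dimensional dynamics of the period map on an amenable fibre, extract a stable fixed point there, and then lift the stability back to $\mathbb{H}$. Fix the base point $q\in\mathcal{S}^{1}_{\sigma}$ and write $T:=\psi^{\sigma}(q,\cdot)$ for the period map; since $\vartheta^{\sigma}=\mathrm{id}$, a $\sigma$-periodic trajectory at $q$ is exactly a fixed point of $T$, and Lyapunov stability of the trajectory is equivalent to Lyapunov stability of the corresponding fixed point under $T$. Because one of \textbf{(COM1)}, \textbf{(COM2)} holds, \textbf{(COM3)} holds as well (Corollary \ref{COR: CompatTrajectories}) and Theorem \ref{TH: ConvergenceTh} is available: for any $u_{0}\in\mathcal{S}_{0}$ the forward orbit is bounded, hence converges to a $\sigma$-periodic trajectory whose values lie in the closed set $\mathcal{S}_{0}$. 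Thus $T$ already has fixed points, all amenable, and the task is to single out a stable one.

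First I would pass to the reduced map. As $\vartheta^{\sigma}(q)=q$, the set $\mathfrak{A}(q)$ is invariant under $T$ (shifting an amenable trajectory by $\sigma$ keeps it amenable), so by Theorem \ref{TH: SmithTheorem} the conjugate $f:=\Pi_{q}\circ T\circ\Pi_{q}^{-1}$ is a continuous self-map of $\mathbb{H}^{-}\cong\mathbb{R}$. The structural fact I would establish is that $f$ is non-decreasing. For any two amenable trajectories $u_{1},u_{2}$ the function $g(t):=e^{2\nu t}V(u_{1}(t)-u_{2}(t))$ is non-increasing by \textbf{(H3)}; amenability gives $\int_{-\infty}^{0}e^{2\nu s}|u_{1}(s)-u_{2}(s)|^{2}ds<\infty$, so the integrand, and with it $g$ (using $|V(\cdot)|\le\|P\|\,|\cdot|^{2}$), tends to $0$ along a sequence $s_{n}\to-\infty$, and monotonicity pins $\sup_{t}g(t)=0$. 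Hence $u_{1}(t)-u_{2}(t)\in\mathcal{C}$ for all $t$. For $j=1$ one has $\mathcal{C}=\mathcal{C}^{+}\cup(-\mathcal{C}^{+})$, so amenable points are $\leqslant$-comparable and their order is recorded precisely by the sign of their $\mathbb{H}^{-}$-components; thus $\Pi_{q}$ is an order isomorphism onto $\mathbb{R}$, and combined with the monotonicity of the cocycle noted in the Remark after \textbf{(H3)} this makes $f$ non-decreasing.

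Next I would extract a stable fixed point of $f$ from the sink. Projecting $\mathcal{S}_{0}$ and its neighbourhood $\mathcal{G}$ yields a bounded interval $I$ containing $\mathrm{Fix}(f)$ and an open $J\supset I$ whose $f$-orbits enter $I$; since $f$ is monotone its orbits are monotone sequences, so entering $I$ from below (above) forces $f(x)\ge x$ (respectively $f(x)\le x$) just below (above) $I$. Reading the sign of $f(x)-x$ from left to right across $I$ it starts nonnegative and ends nonpositive, so there is a fixed point $p^{*}$ across which the sign changes from $+$ to $-$; orbits on both sides are then monotone and trapped toward $p^{*}$, so $p^{*}$ is Lyapunov stable for $f$. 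Let $u^{*}$ be the $\sigma$-periodic trajectory with $\Pi_{q}u^{*}(0)=p^{*}$.

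The main obstacle is the final step: transferring stability of $p^{*}$ under $f$ to stability of $u^{*}$ in the norm of $\mathbb{H}$. Two genuinely infinite-dimensional difficulties arise. First, $\Pi_{q}$ controls only the one-dimensional part $w^{-}$ of $w(t):=\psi^{t}(q,u_{0})-u^{*}(t)$; since $\restr{P}{\mathbb{H}^{+}}$ is compact, hence not bounded below, the squeezing bound $V(w(t))\le e^{-2\nu t}V(w(0))$ controls $(Pw^{+}(t),w^{+}(t))$ but not $|w^{+}(t)|$. Second, a norm-small perturbation need not be $\leqslant$-comparable to $u^{*}(0)$. I would address the second point by noting that $g(t)=e^{2\nu t}V(w(t))$ stays positive only while $w(t)\notin\mathcal{C}$, so the strict dissipation from $\delta>0$ in \textbf{(H3)} drives $w$ into $\mathcal{C}$, after which forward invariance of $\mathcal{C}$ for differences makes $\psi^{t}(q,u_{0})$ comparable to $u^{*}(t)$; sandwiching by two periodic trajectories $u^{*}_{-}\leqslant u^{*}\leqslant u^{*}_{+}$ with $\Pi_{q}$-values inside the stable range of $p^{*}$ then keeps $w^{-}(t)$ small for all large $t$. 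For the first point I would argue by contradiction through a first-exit time: the offending points lie on a bounded tube around the compact orbit $u^{*}([0,\sigma])$, and by \textbf{(COM1)}/\textbf{(COM2)} together with compactness of $\mathcal{Q}=\mathcal{S}^{1}_{\sigma}$ they are precompact, so a subsequence converges strongly; the weak continuity of $V$ supplied by \textbf{(COM1)} and the positive definiteness of $\restr{P}{\mathbb{H}^{+}}$ then convert the smallness of the quadratic form into smallness of $|w^{+}|$, contradicting that the exit value has fixed size $\varepsilon_{0}>0$. This compactness-driven upgrade from control of $V$ to control of the norm is the delicate heart of the proof and the precise place where \textbf{(COM1)}/\textbf{(COM2)} are indispensable.
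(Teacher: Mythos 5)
The first half of your argument is essentially the paper's own, translated into interval-map language: your strictly monotone map $f=\Pi_q\circ T\circ\Pi_q^{-1}$ and the extraction of a stable fixed point $p^{*}$ from the sink is the same construction the paper performs on trajectories (it takes the periodic trajectory with $\Pi v_1^{*}(0)=\sup\Pi\mathcal{K}$, observes it is \emph{upper amenable stable}, and then takes the one realizing $\inf$ over the upper amenable stable class). What this step delivers, in your version and the paper's alike, is stability of $u^{*}$ against perturbations \emph{inside the amenable set} $\mathfrak{A}(q)$. The theorem asserts Lyapunov stability against arbitrary perturbations in $\mathbb{H}$, and that upgrade is exactly the paper's Lemma \ref{LEM: AmenStIsLyapunSt}; your sketch of it contains three genuine gaps. (i) The dissipation in \textbf{(H3)} does not ``drive $w$ into $\mathcal{C}$'': the correct statement is a dichotomy --- either $w(t)\in\mathcal{C}$ at some finite time (and then forever, by monotonicity of $e^{2\nu t}V(w(t))$), or $V(w(t))>0$ for all $t\ge 0$, in which case \textbf{(H3)} only yields $\delta\int_{0}^{\infty}e^{2\nu s}|w(s)|^{2}ds\le V(w(0))$. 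The second alternative happens to be the easy, favorable case, but your argument does not treat it. (ii) The sandwich $u^{*}_{-}\leqslant u^{*}\leqslant u^{*}_{+}$ by periodic trajectories with projections in the stable range of $p^{*}$ does not exist when $p^{*}$ is an isolated fixed point of $f$; and if you replace $u^{*}_{\pm}$ by amenable trajectories (which do exist, by Theorem \ref{TH: SmithTheorem}), you then need the perturbed trajectory $\psi^{t}(q,u_{0})$ --- which is \emph{not} amenable --- to be order-comparable with them. Lemma \ref{LEM: AmenableLemma} gives comparability only between pairs of amenable trajectories; comparability of an arbitrary trajectory with a chosen amenable one is precisely what is unproven, so the sandwich never closes. (iii) Under \textbf{(COM1)} your compactness claim is false: compactness of $P$ does not make a bounded tube precompact (bounded sets of an infinite-dimensional $\mathbb{H}$ never are); it only makes $V$ sequentially weakly continuous on bounded sets. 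Passing from weak limits to the strong convergence your exit-time contradiction needs is exactly the delicate content of Lemma \ref{LEM: GeneralCompactnessLemma} (the homeomorphisms $G_{T_{1}}^{T_{2}}$ plus squeezing against a comparison trajectory); it cannot be invoked as bare ``precompactness'', and your sentence conflates what \textbf{(COM1)} and \textbf{(COM2)} provide.

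For comparison, the paper closes this last step differently and without any of the above: assuming $u^{*}$ is Lyapunov unstable, it takes the escaping trajectories $\widetilde{v}_{k}$ (starting within $1/k$ of $u^{*}(0)$ and exiting the $\varepsilon$-tube at time $T_{k}$), shifts them back by whole periods, and applies Lemma \ref{LEM: GeneralCompactnessLemma} to produce a single \emph{amenable} trajectory $v^{\varepsilon}$ staying $\varepsilon$-close to $u^{*}$ on $(-\infty,\theta_{\varepsilon})$ and exiting at $\theta_{\varepsilon}$ (Lemma \ref{LEM: LyapunovInstabilityLemma}); combined with the backward convergence of $v^{\varepsilon}$ to a periodic trajectory (Lemma \ref{LEM: ConvergenceLemma}) this contradicts amenable stability directly. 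Since every object produced by that limiting procedure is amenable by construction, the paper never needs your cone-entry claim, nor comparability of non-amenable trajectories, nor a precompact tube. If you want to complete your write-up, the realistic route is to prove the implication ``amenable stable $\Rightarrow$ Lyapunov stable'' in this form rather than to repair the sandwich argument.
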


Under the conditions of Theorem \ref{TH: ConvergenceTh} any isolated Lyapunov stable $\sigma$-periodic trajectory is asymptotically stable (see Proposition \ref{PROS: IsolatedStable}).

\begin{remark}
	In applications, the property in \textbf{(H3)} is linked with the Lipschitzity of nonlinearities. Often these conditions are not satisfied globally and holds only on a certain invariant set $\mathcal{S}$. Redefining the nonlinearities outside $\mathcal{S}$ to make them well-behaved globally, one can still apply Theorems \ref{TH: ConvergenceTh} and \ref{TH: StablePeriodicSmith} for the modified system to derive results for the original one, but only on the set $\mathcal{S}$. Non-trivial applications of this are given in \cite{Anik2020PB}.
\end{remark}

We also derive an extension of the Massera second theorem \cite{Massera1950} as follows.
\begin{theorem}
	\label{TH: MasseraTheorem}
	Suppose that the cocycle $(\psi,\vartheta)$ is $\sigma$-periodic and let \textbf{(H1)}, \textbf{(H2)} with $j=2$, \textbf{(H3)} and one of \textbf{(COM1)} or \textbf{(COM2)} hold. If there exists a bounded semi-trajectory $u(t)=\psi^{t}(q,u_{0})$, where $t \geq 0$, at $q \in \mathcal{Q}$ then there exists a $\sigma$-periodic trajectory.
\end{theorem}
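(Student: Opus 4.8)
The plan is to use the reduction principle of Theorem \ref{TH: SmithTheorem} to conjugate the period map to an orientation-preserving homeomorphism of the two-dimensional space $\mathbb{H}^{-}$, and then to invoke the Brouwer plane translation theorem. Throughout, put $\mathcal{P}_{q} := \psi^{\sigma}(q,\cdot)$; since $\vartheta^{\sigma} = \mathrm{id}$ on $\mathcal{S}^{1}_{\sigma}$, the map $\mathcal{P}_{q}$ sends the fibre over $q$ into itself, and a fixed point of $\mathcal{P}_{q}$ produces a $\sigma$-periodic trajectory (extend $t \mapsto \psi^{t}(q,p^{*})$ by $\sigma$-periodicity). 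So the whole problem reduces to finding a fixed point of $\mathcal{P}_{q}$ in $\mathfrak{A}(q)$.

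First I would establish \textbf{(H4)} from the bounded semi-trajectory, as it is needed to apply Theorem \ref{TH: SmithTheorem}. By Corollary \ref{COR: CompatTrajectories} the assumptions \textbf{(COM1)}/\textbf{(COM2)} yield \textbf{(COM3)}, so the forward orbit $\{\mathcal{P}_{q}^{n}u_{0}\}_{n\ge 0}$ is precompact. A standard backward-extension argument using this precompactness shows that every point of its $\omega$-limit set $\omega$ lies on a complete bounded trajectory at $q$; since $\nu>0$, such trajectories are amenable, whence $\emptyset \neq \omega \subset \mathfrak{A}(q)$. Shifting an amenable trajectory in time preserves amenability, so $\mathfrak{A}(\vartheta^{\tau}q)\neq\emptyset$ for every $\tau$, and as $\vartheta$ acts transitively on $\mathcal{S}^{1}_{\sigma}$ the assumption \textbf{(H4)} holds on all fibres. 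Note the semi-trajectory itself need not lie in $\mathfrak{A}(q)$, which is why one must pass to $\omega$.

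Next I would carry out the reduction. By Theorem \ref{TH: SmithTheorem} each $\Pi_{q'}\colon\mathfrak{A}(q')\to\mathbb{H}^{-}$ is a homeomorphism, and $\mathcal{P}_{q}$ restricts to a homeomorphism of $\mathfrak{A}(q)$ (its inverse is obtained by running the complete amenable trajectories backward, uniqueness following from the squeezing estimate \eqref{EQ: SqueezingProperty}). Hence $\widehat{\mathcal{P}} := \Pi_{q}\circ\mathcal{P}_{q}\circ\Pi_{q}^{-1}$ is a homeomorphism of $\mathbb{H}^{-}\cong\mathbb{R}^{2}$, and $K:=\Pi_{q}(\omega)$ is a nonempty compact $\widehat{\mathcal{P}}$-invariant subset. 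To see that $\widehat{\mathcal{P}}$ preserves orientation, consider the path $h_{t} := \Pi_{\vartheta^{t}q}\circ\psi^{t}(q,\cdot)\circ\Pi_{q}^{-1}$, $t\in[0,\sigma]$, of homeomorphisms of the fixed two-dimensional space $\mathbb{H}^{-}$; it joins $h_{0}=\mathrm{id}$ to $h_{\sigma}=\widehat{\mathcal{P}}$ continuously (using continuity of the cocycle and the fibrewise continuity of the inverses $\Pi_{q'}^{-1}$), so $\widehat{\mathcal{P}}$ is isotopic to the identity and therefore orientation-preserving.

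Finally I would apply the Brouwer plane translation theorem in the form: an orientation-preserving homeomorphism of $\mathbb{R}^{2}$ with a non-wandering point has a fixed point. A minimal subset of the compact invariant set $K$ supplies a recurrent, hence non-wandering, point of $\widehat{\mathcal{P}}$, so $\widehat{\mathcal{P}}$ has a fixed point $\hat p$. Then $p^{*}:=\Pi_{q}^{-1}(\hat p)\in\mathfrak{A}(q)$ satisfies $\psi^{\sigma}(q,p^{*})=p^{*}$, yielding the desired $\sigma$-periodic trajectory. The main obstacle is the topological input: verifying that $\widehat{\mathcal{P}}$ is genuinely orientation-preserving (the continuity and well-definedness of the isotopy $h_{t}$, which hinges on the fibrewise continuity of $\Pi_{q'}^{-1}$), and rigorously transferring the presence of a compact invariant set into the hypothesis of the plane translation theorem; the construction of the amenable $\omega$-limit set from a merely forward-bounded trajectory is the other point demanding care.
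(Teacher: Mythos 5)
Your proposal is correct and follows essentially the same route as the paper: establish \textbf{(H4)} from the bounded semi-trajectory (via a complete bounded, hence amenable, trajectory), use Theorem \ref{TH: SmithTheorem} to identify $\mathfrak{A}(q)$ with $\mathbb{R}^{2}$, and apply the Massera--Brouwer planar fixed-point argument to the Poincar\'e map $\psi^{\sigma}(q,\cdot)$ restricted to $\mathfrak{A}(q)$. The only difference is one of detail: the paper delegates the topological step (orientation preservation and the passage from a point with convergent subsequence of iterates to a fixed point) to the cited arguments of Massera and Pliss, whereas you spell it out through the isotopy $h_{t}$ and the plane translation theorem.
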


For ODEs, various analogs of \textbf{(H3)} are well-known and were widely used to study stability, existence of forced oscillations and dimension-like properties \cite{LeoBurShep1996,Smith1986,Anikushin2019ND,AnikushinRR2019,Anikushin2019Vestnik, KuzLeoReit2019}. The key point here is that \textbf{(H3)} can be effectively verified with the use of the Yakubovich-Kalman frequency theorem \cite{Gelig1978}. Infinite-dimensional versions of the frequency theorem are known \cite{Likhtarnikov1976,Likhtarnikov1977,LouisWexler1991}, but their applications are usually considered for the problems of absolute stability and optimal control\footnote{However, even in these directions applications encounter a couple of obstacles.}, and rarely seen in the direction of the present paper \cite{KalininReitmann2012,Popov2014}. 

In order to apply our results to delay equations we consider them in the $L_{2}$-setting. For checking the well-posedness of delay equations the theory presented in \cite{Webb1981,Faheem1987,BatkaiPiazzera2005,Webb1976} is useful. 

For our purposes to get the operator $P$ from \textbf{(H3)} the frequency theorem is applied to a linear equation with an unstable operator or more precisely, to the pair $(A,B)$, where $A$ denotes the linear part and $B$ is a control operator (for example, boundary operator). For these applications it is required the $L_{2}$-controllability of the pair $(A,B)$ or, that turns out to be equivalent, its exponential stabilizability (see Appendix \ref{SEC: FrequencyTheorem}). In concrete examples this property can be checked by direct calculations (see Sections \ref{SEC: DelayedSystems} and \ref{SEC: Parabolic} and also \cite{Anik2020PB}). In most cases the stabilizability can be considered as a non-degeneracy condition for the pair $(A,B)$. However, the problem of boundary stabilizability for parabolic problems is less trivial and was considered in several papers (see, for example, \cite{Lasiecka1983,Liu2003} and references therein). This condition makes it harder to compare\footnote{One of the main conditions in \cite{Smith1990,Smith1994} is of frequency-domain type and as it shown in \cite{Anik2020PB} it implies the usual frequency condition, i.~e. the condition $\alpha_{3}<0$, where $\alpha_{3}$ is from Appendix \ref{SEC: FrequencyTheorem}.} our results with the corresponding ones in \cite{Smith1986,Smith1990}. The use of the frequency theorem allows us to study problems, which were not considered by R.~A.~Smith (such as parabolic problems with boundary controls), and sometimes leads to more sharper estimates than those of R.~A.~Smith (see \cite{Anik2020PB} for a comparison).

So, the presented approach (along with \cite{Anik2020PB}) is an attempt to unify the results of \cite{Smith1986,Smith1990,Smith1994} and other papers of R.~A.~Smith. While the exponential stabilizability does not seem so restrictive (but sometimes hard to verify), we cannot apply our theory to study delay equations with an unbounded (in $L_{2}$) measurement operator due to the limitations in the current versions of the frequency theorem (see Remarks \ref{REM: DelayLimitations} and \ref{REM: LimitFreqCond}). We hope that our results will also stimulate developments of the frequency-domain methods.

This paper is organized as follows. In Section \ref{SEC: AmenableSets} we give a proof of Theorem \ref{TH: SmithTheorem}. In Section \ref{SEC: TheMapPhi} we study properties of the family of sets $\mathfrak{A}(q)$, $q \in \mathcal{Q}$, which in certain cases give rise to an invariant w.~r.~t. the skew-product flow topological manifold, which attracts compact semi-trajectories. In Section \ref{SEC: AbstractPerCoc} we prove Theorems \ref{TH: ConvergenceTh}, \ref{TH: StablePeriodicSmith} and \ref{TH: MasseraTheorem}. In Section \ref{SEC: DelayedSystems} we consider delay equations modelling analog neural networks posed in a proper Hilbert space. For a single scalar equation we present a complete analysis of all the conditions necessary to apply Theorem \ref{TH: ConvergenceTh}. In Section \ref{SEC: Parabolic} by similar techniques we study the parabolic problem of heating of a one-dimensional rode under a monotone nonlinear boundary control. In Appendix \ref{SEC: FrequencyTheorem}, for the convenience of the reader, we expound the frequency theorem of Yakubovich-Likhtarnikov for $C_{0}$-semigroups. In Appendix \ref{SEC: LyapunivIneq} we collect (with proofs) some propositions that are useful in studying spectral properties of the operator $P$.
\section{Structure of amenable sets}
\label{SEC: AmenableSets}
We say that a continuous map $u \colon [-T;+\infty) \to \mathbb{H}$ is a \textit{trajectory of the cocycle at $q$ defined for $t \geq -T$} if $u(t+s)=\psi^{t}(\vartheta^{s}(q),u(s))$ for every $t \geq 0$ and $s \in [-T,+\infty)$. It is easy too see that in terms of trajectories $u(\cdot)$ and $v(\cdot)$ (both at $q$ and defined for $t \geq -T$) inequality \eqref{EQ: SqueezingProperty} can be written as
\begin{equation}
\label{EQ: H3TrajectoryForm}
e^{2\nu r} V(u(r)-v(r)) - e^{2\nu l}V(u(l)-v(l)) \leq -\delta\int_{l}^{r} e^{2\nu s} |u(s) - v(s)|^{2}ds
\end{equation}
for every $-T \leq l\leq r < \infty$. We will usually refer to \textbf{(H3)} in such a form.

\begin{lemma}
	\label{LEM: AmenableLemma}
	Within \textbf{(H1)} and \textbf{(H3)} let $u^{*}$ and $v^{*}$ be complete trajectories at $q \in \mathcal{Q}$. We have
	\begin{enumerate}
		\item[1)] If both $u^{*}$ and $v^{*}$ are amenable then $V(u^{*}(t)-v^{*}(t)) \leq 0$ for every $t \in \mathbb{R}$.
		\item[2)] If $u^{*}$ is amenable and $V(u^{*}(t)-v^{*}(t)) \leq 0$ for every $t \in \mathbb{R}$ then $v^{*}$ is amenable.
	\end{enumerate}
\end{lemma}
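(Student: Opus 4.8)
The plan is to work throughout with the difference $w(t) := u^{*}(t) - v^{*}(t)$ and to exploit inequality \eqref{EQ: H3TrajectoryForm}, which for the two complete trajectories $u^{*}, v^{*}$ at $q$ holds for all $-\infty < l \leq r < \infty$ and reads
\begin{equation*}
e^{2\nu r} V(w(r)) - e^{2\nu l}V(w(l)) \leq -\delta\int_{l}^{r} e^{2\nu s} |w(s)|^{2}\,ds.
\end{equation*}
The two ingredients I would combine with this are the boundedness of $P$ (which gives $|V(w(t))| \leq \|P\|\,|w(t)|^{2}$) and the elementary bound $|w(s)|^{2} = |u^{*}(s)-v^{*}(s)|^{2} \leq 2|u^{*}(s)|^{2} + 2|v^{*}(s)|^{2}$, which transfers amenability between the individual trajectories and their difference.

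For part 1), I would first note that if both $u^{*}$ and $v^{*}$ are amenable, then by the quadratic bound above the integral $\int_{-\infty}^{0} e^{2\nu s}|w(s)|^{2}\,ds$ is finite. Convergence of this integral forces $\liminf_{s \to -\infty} e^{2\nu s}|w(s)|^{2} = 0$, so one may select a sequence $l_{n} \to -\infty$ along which $e^{2\nu l_{n}}|w(l_{n})|^{2} \to 0$; by $|V(w(l_{n}))| \leq \|P\|\,|w(l_{n})|^{2}$ the boundary term $e^{2\nu l_{n}}V(w(l_{n}))$ then tends to $0$. Fixing an arbitrary $r$ and passing to the limit $l = l_{n} \to -\infty$ in the displayed inequality yields $e^{2\nu r}V(w(r)) \leq -\delta\int_{-\infty}^{r} e^{2\nu s}|w(s)|^{2}\,ds \leq 0$, hence $V(w(r)) \leq 0$ for every $r$, as required.

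For part 2), the sign information $V(w(t)) \leq 0$ is used directly. Fixing $r = 0$ and keeping $l$ free, the displayed inequality rearranges to
\begin{equation*}
\delta\int_{l}^{0} e^{2\nu s}|w(s)|^{2}\,ds \leq e^{2\nu l}V(w(l)) - V(w(0)) \leq -V(w(0)),
\end{equation*}
where the last step uses $V(w(l)) \leq 0$. Since the right-hand side is a finite constant independent of $l$, letting $l \to -\infty$ gives $\int_{-\infty}^{0} e^{2\nu s}|w(s)|^{2}\,ds < \infty$. Combining this with the amenability of $u^{*}$ through the bound $|v^{*}(s)|^{2} \leq 2|u^{*}(s)|^{2} + 2|w(s)|^{2}$ shows that $v^{*}$ is amenable.

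The only delicate point is the subsequence argument in part 1): one cannot expect $e^{2\nu l}V(w(l))$ to converge as $l \to -\infty$ without extra regularity, so the key is to extract the vanishing boundary term from the mere convergence of the amenability integral, and this is exactly where the boundedness (not compactness) of $P$ enters. Part 2) is essentially a one-line consequence once the sign condition is available, so I expect no real obstacle there.
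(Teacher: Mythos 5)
Your proof is correct and follows essentially the same route as the paper: inequality \eqref{EQ: H3TrajectoryForm} applied to the difference, a sequence $l_n \to -\infty$ extracted from the convergence of the amenability integral to kill the boundary term in part 1), and the sign condition plus a Minkowski-type bound ($|v^{*}|^{2} \leq 2|u^{*}|^{2} + 2|w|^{2}$) to transfer amenability in part 2). Your write-up is in fact slightly more explicit than the paper's, which leaves the $\liminf$ extraction and the role of the boundedness of $P$ implicit.
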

\begin{proof}
	1) Using \textbf{(H3)} we consider \eqref{EQ: H3TrajectoryForm} for $u^{*}(\cdot)$ and $v^{*}(\cdot)$ at $[l,t]$:
	\begin{equation}
	\label{EQ: AmenablePropGeneral}
	e^{2\nu t} V(u^{*}(t)-v^{*}(t)) - e^{2 \nu l}V(u^{*}(l)-v^{*}(l)) \leq -\delta \int\limits_{l}^{t}e^{2\nu s}|u^{*}(s)-v^{*}(s)|^{2}ds.
	\end{equation}
	Since $u^{*}$ and $v^{*}$ are amenable the integral $\int_{-\infty}^{0} e^{2\nu s} |u^{*}(s)-v^{*}(s)|^{2} ds$ converges and as a consequence there is a sequence of $l=l_{k} \to -\infty$ such that
	$e^{2\nu l_{k}} V(u^{*}(l_{k}) - v^{*}(l_{k})) \to 0$ as $k \to \infty$. Considering \eqref{EQ: AmenablePropGeneral} with $l=l_{k}$ and taking it to the limit as $k \to \infty$ we get
	\begin{equation}
	e^{2\nu t} V(u^{*}(t)-v^{*}(t)) \leq -\delta \int\limits_{-\infty}^{t}e^{2\nu s}|u^{*}(s)-v^{*}(s)|^{2}ds
	\end{equation}
	that proves $V(u^{*}(t)-v^{*}(t)) \leq 0$ for all $t \in \mathbb{R}$.
	
	2) Consider \eqref{EQ: AmenablePropGeneral} with $t=0$ and $l \to -\infty$. From the property $V(u^{*}(l)-v^{*}(l)) \leq 0$ for all $l \in \mathbb{R}$ we get
	\begin{equation}
	\label{EQ: Lemma1SmithAmenable}
	-V(u^{*}(0)-v^{*}(0)) \geq \delta \int\limits_{-\infty}^{0}e^{2\nu s}|u^{*}(s)-v^{*}(s)|^{2}ds.
	\end{equation}
	Thus the amenability of $v^{*}$ follows from \eqref{EQ: Lemma1SmithAmenable} and the Minkowski inequality.
\end{proof}

Within \textbf{(H1)} any $u \in \mathbb{H}$ can be represented uniquely as $u=u^{+} + u^{-}$, where $u^{+} \in \mathbb{H}^{+}$ and $u^{-} \in \mathbb{H}^{-}$. It is clear that $V(u)=(Pu,u) = (\restr{P}{\mathbb{H}^{+}}u^{+},u^{+}) + (\restr{P}{\mathbb{H}^{-}}u^{-},u^{-})$.

\begin{lemma}
	\label{LEM: PiIsHomeo}
	Let \textbf{(H1)}, \textbf{(H3)}, \textbf{(H4)} hold and $q$ be fixed; then the map $\Pi_{q} = \restr{\Pi}{\mathfrak{A}(q)} \colon \mathfrak{A}(q) \to \Pi \mathfrak{A}(q)$ is a homeomorphism.
\end{lemma}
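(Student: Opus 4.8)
The plan is to verify that $\Pi_{q}$ is a continuous bijection onto $\Pi\mathfrak{A}(q)$ whose inverse is continuous as well. Continuity of $\Pi_{q}$ is immediate, being the restriction of the bounded linear projector $\Pi$, and surjectivity onto $\Pi\mathfrak{A}(q)$ holds by the very definition of the image. So only injectivity and continuity of $\Pi_{q}^{-1}$ require work, and I expect the latter to be the main obstacle.

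For injectivity I would take $u_{0},v_{0} \in \mathfrak{A}(q)$ with $\Pi u_{0} = \Pi v_{0}$, i.~e. $u_{0}^{-}=v_{0}^{-}$, and let $u^{*},v^{*}$ be amenable complete trajectories through them at $q$. By part 1) of Lemma~\ref{LEM: AmenableLemma} we have $V(u_{0}-v_{0}) \leq 0$. But $w:=u_{0}-v_{0}$ lies in $\mathbb{H}^{+}$ since $w^{-}=u_{0}^{-}-v_{0}^{-}=0$, so $V(w)=(\restr{P}{\mathbb{H}^{+}}w,w) \geq 0$ with equality only for $w=0$ by the positive-definiteness of $\restr{P}{\mathbb{H}^{+}}$ in \textbf{(H1)}. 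Hence $w=0$ and $u_{0}=v_{0}$.

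The delicate point is continuity of $\Pi_{q}^{-1}$: one cannot bound $|w^{+}|$ by $V(w^{+})$ because $\restr{P}{\mathbb{H}^{+}}$ need not be uniformly positive (indeed it is not under \textbf{(COM1)}), so the naive estimate fails and the argument must instead pass through the dynamics. I would argue as follows. Let $u_{n},u_{0} \in \mathfrak{A}(q)$ with $\Pi u_{n} \to \Pi u_{0}$, and denote by $u_{n}^{*},u_{0}^{*}$ the corresponding amenable trajectories. Writing $w_{n}:=u_{n}-u_{0}$ and splitting $V(w_{n})=(\restr{P}{\mathbb{H}^{+}}w_{n}^{+},w_{n}^{+})+(\restr{P}{\mathbb{H}^{-}}w_{n}^{-},w_{n}^{-})$, the nonnegativity of the first summand together with $V(w_{n}) \leq 0$ (Lemma~\ref{LEM: AmenableLemma}) gives $0 \leq -V(w_{n}) \leq -(\restr{P}{\mathbb{H}^{-}}w_{n}^{-},w_{n}^{-}) \leq \|P\|\,|\Pi u_{n}-\Pi u_{0}|^{2} \to 0$, so $V(w_{n}) \to 0$. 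Feeding this into \eqref{EQ: Lemma1SmithAmenable} yields $\int_{-\infty}^{0} e^{2\nu s}|u_{n}^{*}(s)-u_{0}^{*}(s)|^{2}\,ds \to 0$.

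Finally I would convert this $L_{1}$-in-time control of the past into norm convergence at $t=0$ by continuous dependence. The integral convergence forces a subsequence along which $|u_{n_{k}}^{*}(s)-u_{0}^{*}(s)| \to 0$ for almost every $s<0$; fixing one such instant $l_{0}<0$ we get $u_{n_{k}}^{*}(l_{0}) \to u_{0}^{*}(l_{0})$, and applying the continuous map $\psi^{-l_{0}}(\vartheta^{l_{0}}(q),\cdot)$ (property 3 of a cocycle) together with the complete-trajectory relation $u^{*}(0)=\psi^{-l_{0}}(\vartheta^{l_{0}}(q),u^{*}(l_{0}))$ gives $u_{n_{k}} \to u_{0}$. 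A standard subsequence argument then upgrades this to $u_{n} \to u_{0}$, which proves that $\Pi_{q}^{-1}$ is continuous. I emphasize that no compactness is used here; the assumptions \textbf{(COM1)}, \textbf{(COM2)} enter only later, to show that $\Pi\mathfrak{A}(q)$ exhausts $\mathbb{H}^{-}$ in Theorem~\ref{TH: SmithTheorem}.
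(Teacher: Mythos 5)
Your proof is correct and follows essentially the same route as the paper's: both hinge on the inequality $-V(u^{*}(0)-v^{*}(0)) \geq \delta \int_{-\infty}^{0} e^{2\nu s}|u^{*}(s)-v^{*}(s)|^{2}\,ds$ for pairs of amenable trajectories, deduce that convergence of the $\Pi$-projections forces this integral to vanish, extract convergence at a past time, and push it forward to $t=0$ using the continuity of the cocycle together with a subsequence argument. The only cosmetic differences are that you obtain injectivity from the sign of $V$ on $\mathbb{H}^{+}$ (rather than directly from the integral inequality) and that you use almost-everywhere convergence along a subsequence at a fixed instant $l_{0}$ where the paper uses the mean value theorem on $[-2,-1]$ with varying times $t_{m}$.
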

\begin{proof}
	Clearly, $\Pi$ is continuous. Suppose $u^{*}(\cdot)$ and $v^{*}(\cdot)$ are two amenable trajectories passing trough $u^{*}(0)$ and $v^{*}(0)$ respectively at $q$. From \textbf{(H3)}, \textbf{(H1)}, the amenability and the Cauchy–Bunyakovsky–Schwarz inequality we have
	\begin{equation}
	\label{EQ: AmenablePiHomeo}
	\left| \restr{P}{\mathbb{H}^{-}} \right| \cdot \left| \Pi u^{*}(0) - \Pi v^{*}(0) \right|^{2} \geq -V(u^{*}(0)-v^{*}(0)) \geq \delta \int_{-\infty}^{0}e^{2\nu s} | u^{*}(s) - v^{*}(s)|^{2}ds.
	\end{equation}
	From \eqref{EQ: AmenablePiHomeo} it is clear that $\Pi$ is injective on $\mathfrak{A}(q)$. 
	
	Suppose $\Pi v^{(0)}_{k}$, $v^{(0)}_{k} \in \mathfrak{A}(q)$ for $k=1,2,\ldots$, converges to $\Pi u^{*}_{0}$, $u^{*}_{0} \in \mathfrak{A}(q)$. Consider the corresponding amenable trajectories $v^{*}_{k}(\cdot)$ and $u^{*}(\cdot)$ at $q$ passing through $v^{(0)}_{k}=v^{*}_{k}(0)$ and $u_{0}=u^{*}(0)$ respectively. Suppose that $v^{*}_{k}(0)$ does not converge to $u^{*}_{0}$. Then there exists a subsequence $v^{*}_{k_{m}}(0)$, $m=1,2,\ldots$, and $\delta_{0}>0$ such that $|v^{*}_{k_{m}}(0) - u^{*}(0) | \geq \delta_{0} > 0$. From a similar to \eqref{EQ: AmenablePiHomeo} inequality we have that
	\begin{equation}
	\int_{-\infty}^{0}e^{2\nu s} | u^{*}(s) - v^{*}_{k_{m}}(s)|^{2} \to 0.
	\end{equation}
	In particular, by the mean value theorem, for some sequence $t_{m} \in [-2,-1]$ we have that $v^{*}_{k_{m}}(t_{m}) - u^{*}(t_{m}) \to 0$ as $m \to \infty$. We may assume that $t_{m}$ converges to some $\overline{t} \in [-2,-1]$. From this it follows that $v^{*}_{k_{m}}(t_{m}) \to u^{*}(\overline{t})$ Using the continuity of the cocycle we get the convergence $v^{*}_{k_{m}}(t) \to u^{*}(t)$ for $t \in (\overline{t},+\infty)$ and, in particular, $v^{*}_{k_{m}}(0) \to u^{*}(0)$ that leads to a contradiction.
\end{proof}

\begin{corollary}
	\label{COR: UniqueAmenable}
	Let the conditions of Lemma \ref{LEM: PiIsHomeo} be satisfied. Then for every $u_{0} \in \mathfrak{A}_{q}$ there exists a unique amenable trajectory passing through $u_{0}$ at $q$.
\end{corollary}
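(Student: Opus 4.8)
The plan is to separate the two time-directions and let existence take care of itself. Existence is immediate from the very definition of the amenable set: by construction $u_{0} \in \mathfrak{A}(q)$ means precisely that \emph{some} amenable complete trajectory passes through $u_{0}$ at $q$, so the whole content of the statement is uniqueness.

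For uniqueness I would suppose that $u^{*}(\cdot)$ and $v^{*}(\cdot)$ are two amenable complete trajectories at $q$ with $u^{*}(0)=v^{*}(0)=u_{0}$ and first dispose of the forward direction. Since a complete trajectory satisfies $u^{*}(t)=\psi^{t}(q,u^{*}(0))$ for all $t \geq 0$ (take $s=0$ in the defining relation $u^{*}(t+s)=\psi^{t}(\vartheta^{s}(q),u^{*}(s))$), and likewise for $v^{*}$, the forward determinism built into the cocycle gives $u^{*}(t)=\psi^{t}(q,u_{0})=v^{*}(t)$ for every $t \geq 0$.

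The backward direction is where the squeezing property does the real work. Both trajectories are amenable, so Lemma \ref{LEM: AmenableLemma} (part 1) yields $V(u^{*}(t)-v^{*}(t)) \leq 0$ for all $t \in \mathbb{R}$, and hence the chain of inequalities \eqref{EQ: AmenablePiHomeo} from the proof of Lemma \ref{LEM: PiIsHomeo} is available for this pair. Since $u^{*}(0)=v^{*}(0)$ we have $\Pi u^{*}(0)=\Pi v^{*}(0)$, so the left-hand side of \eqref{EQ: AmenablePiHomeo} vanishes, forcing
\[
\delta \int_{-\infty}^{0} e^{2\nu s}\,|u^{*}(s)-v^{*}(s)|^{2}\,ds = 0.
\]
As the integrand is non-negative and continuous, $u^{*}(s)=v^{*}(s)$ for all $s \leq 0$; combined with the forward agreement this gives $u^{*} \equiv v^{*}$, which is exactly uniqueness.

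The one point requiring care — and what I would flag as the crux — is that a cocycle is a priori only a forward semi-dynamical system, so one cannot invoke invertibility to obtain backward uniqueness for free; it is precisely the dissipative estimate \textbf{(H3)}, funneled through \eqref{EQ: AmenablePiHomeo}, that rigidifies the past once the present value is fixed. Everything else is routine: the forward half is pure determinism of the cocycle, and the only inequality used, \eqref{EQ: AmenablePiHomeo}, together with the sign condition $V(u^{*}(t)-v^{*}(t))\leq 0$, has already been established in Lemmas \ref{LEM: AmenableLemma} and \ref{LEM: PiIsHomeo}.
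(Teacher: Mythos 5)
Your proof is correct and is essentially the paper's own argument: the paper disposes of the corollary in one line as an ``immediate consequence of \eqref{EQ: AmenablePiHomeo}'', and your write-up simply makes explicit the two ingredients implicit there, namely forward agreement by cocycle determinism and backward agreement from the vanishing of the integral $\delta \int_{-\infty}^{0} e^{2\nu s}|u^{*}(s)-v^{*}(s)|^{2}\,ds$ once $\Pi u^{*}(0)=\Pi v^{*}(0)$. (Your invocation of Lemma \ref{LEM: AmenableLemma}, part 1, is harmless but not needed: the first inequality in \eqref{EQ: AmenablePiHomeo} holds for any vector since $\restr{P}{\mathbb{H}^{+}}\geq 0$, and the second already encodes amenability.)
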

\begin{proof}
	This is an immediate consequence of \eqref{EQ: AmenablePiHomeo}.
\end{proof}

Let $q \in \mathcal{Q}$ and $v \in \mathbb{H}$ be fixed. For two real numbers $T_{1} < T_{2}$ consider the map $G_{T_{1}}^{T_{2}} \colon \mathbb{H}^{-} \to \mathbb{H}^{-}$ defined as
\begin{equation}
\label{EQ: OperatorG}
G_{T_{1}}^{T_{2}}(\zeta) = G_{T_{1}}^{T_{2}}(\zeta; q, v) := \Pi \psi^{T_{2}-T_{1}}(\vartheta^{T_{1}}(q), v + \zeta).
\end{equation}

\begin{lemma}
	\label{LEM: OperatorG}
	Assume \textbf{(H1)}, \textbf{(H2)} and \textbf{(H3)} are satisfied; then $G_{T_{1}}^{T_{2}} \colon \mathbb{H}^{-} \to \mathbb{H}^{-}$ defined in \eqref{EQ: OperatorG} is a homeomorphism.
\end{lemma}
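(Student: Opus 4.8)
The plan is to fix the notation $\tau := T_{2} - T_{1} > 0$ and $p := \vartheta^{T_{1}}(q)$, so that $G := G_{T_{1}}^{T_{2}}$ acts as $G(\zeta) = \Pi \psi^{\tau}(p, v + \zeta)$, and then to establish in turn that $G$ is continuous, injective and proper. Since $\mathbb{H}^{-}$ is finite-dimensional by \textbf{(H2)}, invariance of domain will upgrade these three facts to the assertion that $G$ is a homeomorphism of $\mathbb{H}^{-}$ onto itself. Continuity is immediate: $\zeta \mapsto v + \zeta$ is affine, $\psi^{\tau}(p,\cdot)$ is continuous by property 3 of a cocycle, and $\Pi$ is bounded and linear.

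For injectivity I would suppose $G(\zeta_{1}) = G(\zeta_{2})$ and let $u(\cdot)$, $w(\cdot)$ be the trajectories at $p$ through $v + \zeta_{1}$ and $v + \zeta_{2}$. At $t = 0$ the difference $u(0) - w(0) = \zeta_{1} - \zeta_{2}$ lies in $\mathbb{H}^{-}$, so by \textbf{(H1)} and the negative-definiteness of $\restr{P}{\mathbb{H}^{-}}$ we have $V(u(0)-w(0)) \leq 0$, with equality only if $\zeta_{1} = \zeta_{2}$. On the other hand $G(\zeta_{1}) = G(\zeta_{2})$ means $\Pi(u(\tau) - w(\tau)) = 0$, i.e. $u(\tau) - w(\tau) \in \mathbb{H}^{+}$, hence $V(u(\tau)-w(\tau)) \geq 0$. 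Feeding these two sign facts into \eqref{EQ: H3TrajectoryForm} on $[0,\tau]$ sandwiches
\[
0 \leq e^{2\nu\tau} V(u(\tau)-w(\tau)) \leq V(u(0)-w(0)) \leq 0,
\]
which forces $V(u(0)-w(0)) = 0$ and therefore $\zeta_{1} = \zeta_{2}$.

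For properness I would compare the trajectory through $v + \zeta$ with the fixed one through $v$. Writing $d := \psi^{\tau}(p, v+\zeta) - \psi^{\tau}(p, v)$ and applying \eqref{EQ: H3TrajectoryForm} on $[0,\tau]$ gives $e^{2\nu\tau} V(d) \leq V(\zeta)$. Because $\mathbb{H}^{-}$ is finite-dimensional, there are constants $0 < c \leq M$ with $-M|x|^{2} \leq (\restr{P}{\mathbb{H}^{-}}x,x) \leq -c|x|^{2}$ for all $x \in \mathbb{H}^{-}$; since $V(d^{+}) \geq 0$ we have $V(d) \geq V(d^{-}) \geq -M|d^{-}|^{2}$, and combining these estimates yields
\[
-M|d^{-}|^{2} \leq e^{-2\nu\tau} V(\zeta) \leq -c\, e^{-2\nu\tau} |\zeta|^{2},
\]
so that $|d^{-}| \geq \sqrt{c/M}\, e^{-\nu\tau} |\zeta|$. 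As $d^{-} = \Pi d = G(\zeta) - G(0)$, this shows $|G(\zeta)| \to \infty$ as $|\zeta| \to \infty$, i.e. $G$ is proper.

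Finally, identifying $\mathbb{H}^{-} \cong \mathbb{R}^{j}$ via \textbf{(H2)}, invariance of domain applied to the continuous injective map $G$ shows that $G$ is open, so $G(\mathbb{H}^{-})$ is open; properness makes $G(\mathbb{H}^{-})$ closed (a preimage-of-compact-is-compact argument shows the image contains all its limit points); and since $\mathbb{H}^{-}$ is connected, $G(\mathbb{H}^{-}) = \mathbb{H}^{-}$. A continuous open bijection is a homeomorphism, which completes the argument. I expect the main obstacle to be the properness estimate together with the topological step: the essential ingredient there is the finite-dimensionality \textbf{(H2)}, which is precisely what is unavailable in Lemma \ref{LEM: PiIsHomeo}, where one can only conclude a homeomorphism onto the image.
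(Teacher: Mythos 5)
Your proof is correct and takes essentially the same route as the paper: continuity, injectivity derived from \textbf{(H3)} and the sign structure of $V$ on $\mathbb{H}^{+}\oplus\mathbb{H}^{-}$, then Brouwer invariance of domain combined with an open--closed--connected argument to force $G_{T_{1}}^{T_{2}}(\mathbb{H}^{-})=\mathbb{H}^{-}$. The only minor internal differences are that the paper proves injectivity via a quantitative integral inequality and closedness of the image via a Cauchy-sequence estimate between arbitrary pairs $\zeta_{k},\zeta_{m}$, whereas you use a sign sandwich for injectivity and a coercivity (properness) bound relative to the basepoint $\zeta=0$; both hinge on the negative definiteness of $\restr{P}{\mathbb{H}^{-}}$ on the finite-dimensional space $\mathbb{H}^{-}$.
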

\begin{proof}
	Let $\zeta_{1}, \zeta_{2} \in \mathbb{H}^{-}$ and consider two corresponding trajectories $u_{i}(t):=\psi^{t - T_{1}}(\vartheta^{T_{1}}(q), v + \zeta_{i})$, where $i=1,2$ and $t \geq T_{1}$. From \textbf{(H3)} we have
	\begin{equation}
	\label{EQ: OperatorGFromH3}
	-e^{2\nu T_{2}} V( u_{1}(T_{2})-u_{2}(T_{2}) ) + e^{2\nu T_{1}} V(u_{1}(T_{1})-u_{2}(T_{1})) \geq \delta \int\limits_{T_{1}}^{T_{2}} e^{2\nu s} |u_{1}(s)-u_{2}(s)|^{2}ds.
	\end{equation}
	Note that $u_{1}(T_{1}) - u_{2}(T_{1}) = \zeta_{1}-\zeta_{2} \in \mathbb{H}^{-}$ and therefore, by \textbf{(H1)}, $V(u_{1}(T_{1})-u_{1}(T_{1})) \leq 0$. From this, \eqref{EQ: OperatorGFromH3} and the inequality
	\begin{equation*}
	-V(u_{1}(T_{2})-u_{1}(T_2)) \leq \left| \restr{P}{\mathbb{H}^{-}} \right| \cdot \left| \Pi u_{1}(T_{2}) - \Pi u_{2}(T_{2}) \right|
	\end{equation*}
	we get
	\begin{equation}
	\label{EQ: OperatorGKeyIneq}
	\left|\Pi u_{1}(T_{2}) - \Pi u_{2} (T_{2}) \right|^{2} \geq \delta \left|\restr{P}{\mathbb{H}^{-}}\right|^{-1} e^{-\nu T_{2}} \int\limits_{T_{1}}^{T_{2}} e^{2\nu s} |u_{1}(s)-u_{2}(s)|^{2}ds.
	\end{equation}
	 By definition, $G_{T_{1}}^{T_{2}}(\zeta_{1}) - G_{T_{1}}^{T_{2}}(\zeta_{2}) = \Pi u_{1}(T_{2})- \Pi u_{2}(T_{2})$ that is contained in the left-hand side of \eqref{EQ: OperatorGKeyIneq}. From \eqref{EQ: OperatorG} and since the cocycle is continuous the map $G_{T_{1}}^{T_{2}}$ is continuous too. From \eqref{EQ: OperatorGKeyIneq} the injectivity of $G_{T_{1}}^{T_{2}}$ follows at once. By the Brouwer theorem on invariance of domain, $G_{T_{1}}^{T_{2}}(\mathbb{H}^{-})$ is open in $\mathbb{H}^{-}$ and $G_{T_{1}}^{T_{2}}$ realizes a homeomorphism between $\mathbb{H}^{-}$ and its image. We will show that the image $G_{T_{1}}^{T_{2}}(\mathbb{H}^{-})$ is closed.
	 Suppose we are given with a fundamental sequence $\Pi u_{k}(T_{2})$, where $k=1,2,\ldots$, corresponding to $u_{k}(t)=\psi^{t-T_{1}}(\vartheta^{T_{1}}(q), \zeta_{k}+v)$. From \textbf{(H3)} for some $\delta_{0}>0$ we have
	 \begin{equation}
	 \label{EQ: OperatorGClosedness}
	 \begin{split}
	 &e^{2\nu T_{2}} \left| \restr{P}{\mathbb{H}^{-}} \right| \cdot \left| \Pi u_{k}(T_{2}) - \Pi u_{m}(T_{2}) \right| \geq -e^{2\nu T_{2}}V(u_{k}(T_{2})-u_{m}(T_{2})) \geq \\
	 &-e^{2\nu T_{1}}V(u_{k}(T_{1})-u_{m}(T_{1})) \geq -e^{2\nu T_{1}} \cdot \delta_{0} \cdot |\zeta_{k}-\zeta_{m}|^{2}.
	 \end{split}
	 \end{equation}
	 By \eqref{EQ: OperatorGClosedness} the sequence $\{ \zeta_{k} \}$ is also fundamental. We denote its limit by $\overline{\zeta}$ and, by continuity of $G_{T_{1}}^{T_{2}}$ we have $G_{T_{1}}^{T_{2}}(\zeta_{k})=\Pi u_{k}(T_{2}) \to G_{T_{1}}^{T_{2}}(\overline{\zeta})$ as $k \to \infty$ that proves the closedness. Thus $G_{T_{1}}^{T_{2}}(\mathbb{H}^{-})$ is open and closed in $\mathbb{H}^{-}$ and, consequently, $G_{T_{1}}^{T_{2}}(\mathbb{H}^{-})=\mathbb{H}^{-}$.
\end{proof}

\begin{corollary}
	\label{COR: OperatorGCont}
	Within the assumptions of Lemma \ref{LEM: OperatorG} consider the set $T_{1,2}:=\{ (T_{1}, T_{2}) \in \mathbb{R}^{2} \ | \ T_{1} < T_{2} \}$; then the map $G \colon T_{1,2} \times \mathbb{H}^{-} \to T_{1,2} \times \mathbb{H}^{-}$ defined as
	\begin{equation}
	G(T_{1},T_{2}, \zeta) := (T_{1},T_{2},G_{T_{1}}^{T_{2}}(\zeta)), 
	\end{equation}
	where $G_{T_{1}}^{T_{2}}(\zeta):=G_{T_{1}}^{T_{2}}(\zeta; q, v)$, is a homeomorphism.
\end{corollary}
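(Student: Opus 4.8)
The plan is to realize $G$ as a continuous injective self-map of an open subset of a finite-dimensional Euclidean space and then invoke Brouwer's invariance of domain theorem, exactly in the spirit of the proof of Lemma \ref{LEM: OperatorG}. The decisive structural observation is that, by \textbf{(H2)}, the subspace $\mathbb{H}^{-}$ is finite-dimensional; after a linear identification $\mathbb{H}^{-} \cong \mathbb{R}^{d}$ (with $d = \dim \mathbb{H}^{-}$), the set $T_{1,2} \times \mathbb{H}^{-}$ becomes an open subset of $\mathbb{R}^{2+d}$, and $G$ maps it into $\mathbb{R}^{2+d}$. This finite-dimensionality is what legitimizes the shortcut and lets us avoid a separate, hands-on continuity-of-inverse argument.

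First I would verify that $G$ is continuous. Since $G$ leaves the coordinates $(T_{1},T_{2})$ fixed, it suffices to check that $(T_{1},T_{2},\zeta) \mapsto G_{T_{1}}^{T_{2}}(\zeta) = \Pi \psi^{T_{2}-T_{1}}(\vartheta^{T_{1}}(q), v+\zeta)$ is continuous. This is a composition of the continuous map $(T_{1},T_{2},\zeta) \mapsto (T_{2}-T_{1}, \vartheta^{T_{1}}(q), v+\zeta)$ — continuous because $\vartheta$ is a flow and vector addition is continuous — followed by the jointly continuous cocycle map $(t,q',w) \mapsto \psi^{t}(q',w)$ and the bounded linear projector $\Pi$. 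Next I would record that $G$ is a bijection of $T_{1,2} \times \mathbb{H}^{-}$ onto itself: if $G(T_{1},T_{2},\zeta_{1}) = G(T_{1}',T_{2}',\zeta_{2})$, the first two coordinates force $T_{1}=T_{1}'$ and $T_{2}=T_{2}'$, whence $G_{T_{1}}^{T_{2}}(\zeta_{1}) = G_{T_{1}}^{T_{2}}(\zeta_{2})$ yields $\zeta_{1}=\zeta_{2}$ by the injectivity of $G_{T_{1}}^{T_{2}}$ from Lemma \ref{LEM: OperatorG}; and surjectivity onto $T_{1,2} \times \mathbb{H}^{-}$ follows since, for each fixed $(T_{1},T_{2}) \in T_{1,2}$, Lemma \ref{LEM: OperatorG} gives $G_{T_{1}}^{T_{2}}(\mathbb{H}^{-}) = \mathbb{H}^{-}$.

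Finally, with $G$ exhibited as a continuous injection of the open set $T_{1,2}\times\mathbb{H}^{-} \subset \mathbb{R}^{2+d}$ into $\mathbb{R}^{2+d}$, the invariance of domain theorem shows that $G$ is an open map and a homeomorphism onto its image $G(T_{1,2}\times\mathbb{H}^{-})$; by the surjectivity just established this image is all of $T_{1,2}\times\mathbb{H}^{-}$, so $G$ is a homeomorphism of $T_{1,2}\times\mathbb{H}^{-}$ onto itself. The only point requiring genuine care is the joint continuity in the time variables $(T_{1},T_{2})$, where one must combine the continuity of the flow $\vartheta^{T_{1}}$ with the joint continuity of the cocycle; the remaining steps are bookkeeping, and the finite dimension supplied by \textbf{(H2)} is precisely what makes the invariance-of-domain argument applicable to the full map $G$ rather than merely to its fibre restrictions.
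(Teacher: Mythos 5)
Your proof is correct and follows essentially the same route as the paper's own argument: the paper likewise observes that $G$ is continuous from its formula, deduces bijectivity from Lemma \ref{LEM: OperatorG} (since $G$ fixes the $(T_{1},T_{2})$ coordinates), and applies Brouwer's invariance of domain using that $T_{1,2}\times\mathbb{H}^{-}$ sits in a finite-dimensional space. Your write-up merely fills in the continuity and bijectivity bookkeeping that the paper leaves implicit.
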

\begin{proof}
	From \eqref{EQ: OperatorG} it is clear that $G$ is continuous. By Lemma \ref{LEM: OperatorG}, $G$ is bijective and since $T_{1,2}$ is homeomorphic to $\mathbb{R}^{2}$ we may apply the theorem on invariance of domain again.
\end{proof}
\begin{remark}
	\label{REM: RemarkBrowerSecond}
	From Corollary \ref{COR: OperatorGCont} it follows that if $T^{(k)}_{1} \to T_{1}$, $T_{2}^{(k)} \to T_{2}$, where $T_{1}<T_{2}$, and $\zeta_{k} \to \zeta$ in $\mathbb{H}^{-}$ then $\left( G_{T^{(k)}_{1}}^{T_{2}^{(k)}} \right)^{-1}(\zeta_{k}) \to \left( G_{T_{1}}^{T_{2}} \right)^{-1}(\zeta)$.
\end{remark}

\begin{lemma}
	\label{LEM: GeneralCompactnessLemma}
	Let $v_{k}(\cdot)$, where $k=1,2,\ldots$, be a sequence of trajectories at $q \in \mathcal{Q}$, which are defined for $t \geq T_{k}$ with $T_{k} \to -\infty$ as $k \to +\infty$. Suppose also that for every $k$ there is a sequence $t^{(l)}_{k}$, where $l=-1,-2,\ldots$, satisfying the properties
	\begin{enumerate}
		\item[1)] For every $l$ there is a compact interval $I_{l}=[a_{l},b_{l}]$ such that $t^{(l)}_{k} \in I_{l}$ for all $k=1,2,\ldots$. Moreover, $b_{l} \to -\infty$ as $l \to -\infty$.
		\item[2)] For every $l$ the sequence $v_{k}(t^{(l)}_{k})$ is bounded uniformly for all sufficiently large $k$ (depending on $l$).
	\end{enumerate}
	Suppose that either \textbf{(COM2)} or \textbf{(H1)}, \textbf{(H2)}, \textbf{(H3)} and \textbf{(COM1)} are satisfied. Then there exists a complete trajectory $v^{*}(\cdot)$ and a subsequence $v_{k_{m}}(\cdot)$, where $m=1,2,\ldots$, such that $v_{k_{m}}(t) \to v^{*}(t)$ as $m \to +\infty$ for every $t \in \mathbb{R}$.
\end{lemma}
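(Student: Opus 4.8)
The plan is to extract, by a diagonal procedure, a single subsequence $v_{k_m}$ that converges strongly at a sequence of times tending to $-\infty$, to build the limit by propagating this convergence forward through the cocycle, and to use the joint continuity of $\psi$ to identify the limit as a complete trajectory. Two observations carry most of the weight. First, the cocycle identity $v_k(t)=\psi^{t-s}(\vartheta^{s}(q),v_k(s))$ (valid for $t\ge s\ge T_k$) expresses the value at a fixed time $t$ as the image of the earlier slice $v_k(s)$ under a single map whose base point $\vartheta^{s}(q)$ does \emph{not} depend on $k$. Second, convergence propagates forward: if $v_{k_m}(s)\to w$ strongly at one time $s$, then $v_{k_m}(t)=\psi^{t-s}(\vartheta^{s}(q),v_{k_m}(s))\to\psi^{t-s}(\vartheta^{s}(q),w)$ for every $t\ge s$ by joint continuity. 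Thus strong convergence at a single time spreads automatically to all later times, and no Arzel\`a--Ascoli / equicontinuity argument is needed.

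The core step is therefore: for each $l$ produce a fixed time $s_l$, with $s_l\to-\infty$ as $l\to-\infty$, at which a subsequence of $v_k(s_l)$ converges strongly. Fix $l$. For large $k$ we have $t_k^{(l)}\le b_l$, so by the cocycle identity $v_k(b_l)=\psi^{b_l-t_k^{(l)}}(\vartheta^{t_k^{(l)}}(q),v_k(t_k^{(l)}))$, where $b_l-t_k^{(l)}$ ranges over the compact set $[0,b_l-a_l]$, the base point over the compact set $\vartheta^{I_l}(q)$, and $v_k(t_k^{(l)})$ over a bounded set by hypothesis 2). Since $\psi$ maps bounded sets into bounded sets over compact time--base ranges, the slice $\{v_k(b_l)\}_k$ is bounded. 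Under \textbf{(COM2)} I set $s_l:=b_l+t_{com}$ and write $v_k(s_l)=\psi^{t_{com}}(\vartheta^{b_l}(q),v_k(b_l))$ with the \emph{fixed} base point $\vartheta^{b_l}(q)$; compactness of $\psi^{t_{com}}(\vartheta^{b_l}(q),\cdot)$ turns the bounded slice into a precompact one and a convergent subsequence exists. Under \textbf{(COM1)} and \textbf{(H3)} I set $s_l:=b_l$ and pass to a subsequence with $v_k(s_l)$ and $v_k(t_\star)$ weakly convergent, where $t_\star<b_l$ is a fixed earlier time whose slice is bounded by the same argument applied to an earlier anchor. Applying the squeezing \eqref{EQ: H3TrajectoryForm} to the pair $v_k,v_m$ on $[t_\star,b_l]$ gives
\begin{equation*}
\delta\int_{t_\star}^{b_l}e^{2\nu\tau}|v_k(\tau)-v_m(\tau)|^2\,d\tau\le e^{2\nu t_\star}V(v_k(t_\star)-v_m(t_\star))-e^{2\nu b_l}V(v_k(b_l)-v_m(b_l)).
\end{equation*}
Because $P$ is compact, $V$ is weakly sequentially continuous on bounded sets, so both boundary terms tend to $0$ as $k,m\to\infty$ along the weakly convergent subsequence; hence $\{v_k\}$ is Cauchy in the strong topology of $L^2_{loc}$, which yields strong convergence at some time, and forward propagation then delivers strong convergence at $b_l$ itself.

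Having secured, for each $l$, a subsequence converging strongly at $s_l$, I pass to a diagonal subsequence $v_{k_m}$ converging strongly at every $s_l$ at once, with limits $w_l:=\lim_m v_{k_m}(s_l)$. I define $v^{*}(t):=\psi^{t-s_l}(\vartheta^{s_l}(q),w_l)$ for $t\ge s_l$; the cocycle identity, together with $w_{l'}=\psi^{s_{l'}-s_l}(\vartheta^{s_l}(q),w_l)$ for $s_l\le s_{l'}$, guarantees that the definitions agree on overlaps, and since $s_l\to-\infty$ the map $v^{*}$ is defined on all of $\mathbb{R}$ and satisfies $v^{*}(t+s)=\psi^{t}(\vartheta^{s}(q),v^{*}(s))$, i.e. it is a complete trajectory at $q$. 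Finally, for any fixed $t$ choose $l$ with $s_l\le t$; then $v_{k_m}(t)=\psi^{t-s_l}(\vartheta^{s_l}(q),v_{k_m}(s_l))\to v^{*}(t)$ by joint continuity, which is the asserted pointwise convergence.

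The main obstacle is exactly the core step: converting the $k$-dependent anchor data (bounded values at the \emph{moving} times $t_k^{(l)}$ and \emph{moving} base points $\vartheta^{t_k^{(l)}}(q)$) into genuine compactness at a fixed time. Applying compactness directly at the moving base point fails, since joint continuity of $\psi$ is not uniform in the $\mathbb{H}$-variable over bounded sets; it is precisely the fixed-base-point rewriting under \textbf{(COM2)}, and the weak-to-strong upgrade via the squeezing inequality together with the weak continuity of $V$ (secured by compactness of $P$) under \textbf{(COM1)}, that circumvent this difficulty.
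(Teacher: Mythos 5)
Your overall architecture is sound: strong convergence at one fixed time does propagate forward by joint continuity, and your endgame (diagonal extraction, consistency of the definition of $v^{*}$ on overlaps, pointwise convergence for every $t$) is essentially the same as the paper's construction of the limit trajectory. The gap is exactly the step you yourself identify as the core one: the assertion that ``$\psi$ maps bounded sets into bounded sets over compact time--base ranges'', which you use to conclude that the fixed-time slices $\{v_k(b_l)\}_k$ (and $\{v_k(t_\star)\}_k$) are bounded. Nothing in the hypotheses provides this. The cocycle is only assumed jointly continuous, and in an infinite-dimensional Hilbert space a continuous map need not be bounded on bounded sets (there are continuous real-valued functions unbounded on the unit ball of $\ell^2$). \textbf{(COM1)} is a property of $P$, not of $\psi$, and \textbf{(COM2)} gives compactness (hence boundedness of images) only after the fixed lapse $t_{com}$ and at a fixed base point, so it cannot ferry the hypothesis-2) bounds from the moving times $t^{(l)}_k$ to the fixed times $b_l$. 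There is even an internal tension in your closing paragraph: you correctly reject applying compactness at moving base points ``since joint continuity of $\psi$ is not uniform in the $\mathbb{H}$-variable over bounded sets'', but that very same non-uniformity is what invalidates your boundedness transfer. Once this step falls, everything downstream collapses: under \textbf{(COM1)} you have no weak convergence at $b_l$ and $t_\star$ (weak sequential compactness needs boundedness), and under \textbf{(COM2)} you have no bounded slice to feed into $\psi^{t_{com}}(\vartheta^{b_l}(q),\cdot)$.

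This is not a cosmetic defect; avoiding it is the actual content of the lemma, and the paper's proof is organized so as never to leave the moving anchor times, where hypothesis 2) supplies boundedness. Under \textbf{(COM2)} the paper applies the compact map directly to the bounded anchor values $v_{k_m}(t^{(l)}_{k_m})$. Under \textbf{(COM1)} it takes weak limits $\bar v_l$ of $v_{k_m}(t^{(l)}_{k_m})$ (legitimate precisely there), and to upgrade weak to strong convergence it does \emph{not} compare $v_k$ with $v_m$ --- your pairwise trick cannot be salvaged at the anchors either, because the squeezing inequality needs both trajectories evaluated at a common time, and $v_m$ is uncontrolled at $v_k$'s anchor time $t^{(l)}_{k}$. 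Instead, the paper builds, via the operator $G_{T_1}^{T_2}$ of Lemma \ref{LEM: OperatorG}, auxiliary trajectories $\varphi_m$ on $[t^{(l-1)}_{k_m}, t^{(l)}_{k_m}]$ whose data are matched to $v_{k_m}$'s own anchors: $\Pi\varphi_m(t^{(l)}_{k_m}) = \Pi\bar v_l$ and $\varphi_m(t^{(l-1)}_{k_m}) - \bar v_{l-1} \in \mathbb{H}^-$. The two boundary terms of the squeezing inequality then vanish in the limit, one because $\Pi$ has finite rank (weak convergence becomes strong after projecting --- this is where \textbf{(H2)} enters), the other by weak sequential continuity of $V$ (this is where \textbf{(COM1)} enters); the existence and continuity of $G^{-1}$ rest on \textbf{(H1)}, \textbf{(H2)}, \textbf{(H3)} and Brouwer's invariance of domain. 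The fact that your \textbf{(COM1)} argument nowhere uses \textbf{(H1)} or \textbf{(H2)} is a symptom of the gap: those hypotheses become dispensable only if one is allowed to postulate bounded slices at fixed times, which is precisely what cannot be justified.
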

\begin{proof}
	By the property in item 1) we may suppose that $b_{l-1} < a_{l}$ for all $l=-1,-2,\ldots$.
	
	Case of \textbf{(COM2)}: Since the sequence $t^{(l)}_{k}$ is bounded in $k$, there is a subsequence $t^{(l)}_{k_{m}} \to \overline{t}_{l}$ as $m \to +\infty$ for some $\overline{t}_{l} \in I_{l}$. Note that $\overline{t}_{l-1} < \overline{t}_{l}$. Let us consider $l$ such that $\overline{t}_{l} + t_{com} < 0$. By item 2) the sequence $v_{k_{m}}(t^{(l)}_{k_{m}})$ is bounded (uniformly in $m$) in $\mathbb{H}$, and therefore the sequence
	\begin{equation}
    v_{k_{m}}(t^{(l)}_{k_{m}}+t_{com})=\psi^{t_{com}}(\vartheta^{t^{(l)}_{k_{m}}}(q), v_{k_{m}}(t^{(l)}_{k_{m}}))
	\end{equation}  
	lies in a compact set of $\mathbb{H}$ due to \textbf{(COM2)}. Therefore there is a subsequence (for the sake of brevity we keep the same index) of $v_{k_{m}}(t^{(l)}_{k_{m}}+t_{com})$ converging strongly to some $\overline{v}_{l}$ as $m \to \infty$. Using Cantor's diagonal procedure we may assume that the latter convergence holds for all $l$. Now consider the trajectories
	\begin{equation}
	\overline{v}_{l}(t):=\psi^{t-(\overline{t}_{l}+t_{com})}(\vartheta^{\overline{t}_{l}+t_{com}}(q), \overline{v}_{l}), \text{ defined for } t \geq \overline{t}_{l}+t_{com}.
	\end{equation}
	Since $v_{k_{m}}(t^{(l)}_{k_{m}}+t_{com}) \to \overline{v}_{l}$ we have the convergence $v_{k_{m}}(t) \to \overline{v}_{l}(t)$ for all $t \in (\overline{t}_{l}+t_{com},+\infty)$. From this it follows that $v_{l}(t)$ and $v_{l-1}(t)$ coincide for all $t \in (\overline{t}_{l}+t_{com},+\infty)$. Thus, the equality
	\begin{equation}
	v^{*}(t):=\overline{v}_{l}(t), \text{ for arbitrary } l \text{ such that } t > \overline{t}_{l}
	\end{equation}
	correctly defines a complete trajectory of the cocycle such that $v_{k_{m}}(t) \to v^{*}(t)$ for every $t \in (-\infty,+\infty)$.
	
	Case of \textbf{(COM1)}: Using Cantor's diagonal procedure one can obtain a subsequence $v_{k_{m}}$, $m=1,2,\ldots$, such that $v_{k_{m}}(t^{(l)}_{k_{m}})$ is well-defined for sufficiently large $m$ (depending on $l$) and converges weakly to some $\overline{v}_{l} \in \mathbb{H}$ and $t^{(l)}_{k_{m}} \to \overline{t}_{l} \in I_{l}$ as $m \to \infty$. Now our purpose is to show that this convergence holds in the strong topology. Assume that for some $l$ the strong convergence does not hold. Then for some subsequence (we keep the same index) we have $|v_{k_{m}} (t^{(l)}_{k_{m}}) - \overline{v}_{l}| \geq \delta_{0} > 0$ for all $m=1,2,\ldots$. We consider the trajectories $\varphi_{m}$, where 
	\begin{equation}
	\label{EQ: SmithThDefiphim}
	\varphi_{m}(t) := \psi^{t-t^{(l-1)}_{k_{m}}}\left(\vartheta^{t^{(l-1)}_{k_{m}}}(q), \left( G_{t^{(l-1)}_{k_{m}}}^{t^{(l)}_{k_{m}}} \right)^{-1}\left( \Pi \overline{v}_{l} ; q, \overline{v}_{l-1} \right) \right)
	\end{equation}
	for $t \geq t_{k_{m}}^{(l-1)}$ and the trajectory $\varphi$ (defined for $t \geq \overline{t}_{l-1}$)
	\begin{equation}
	\label{EQ: SmithThDefiphi}
	\varphi(t):=\psi^{t - \overline{t}_{l-1}} \left( \vartheta^{\overline{t}_{l-1}} (q), \left(G_{\overline{t}_{l-1}}^{\overline{t}_{l}}\right)^{-1}( \Pi \overline{v}_{l}; q, \overline{v}_{l-1}) \right). 
	\end{equation}
	From Corollary \ref{COR: OperatorGCont} (see Remark \ref{REM: RemarkBrowerSecond}) and since the cocycle is continuous it follows that $\varphi_{m}( t ) \to \varphi( t )$ in $\mathbb{H}$ for every $t \in (\overline{t}_{l-1}, +\infty)$. Moreover, from Corollary \ref{COR: OperatorGCont} it follows that $\varphi_{m}( t^{(l-1)}_{k_{m}} ) \to \varphi(\overline{t}_{l-1})$. From \textbf{(H3)} for a small number $\varepsilon>0$ we have
	\begin{multline}
	\label{EQ: IntegralIneqSmithTh}
	-e^{2\nu t^{(l)}_{k_{m}}}V\left( \varphi_{m}\left( t^{(l)}_{k_{m}} \right)-v_{k_{m}} \left( t^{(l)}_{k_{m}} \right) \right) + e^{2\nu t^{(l-1)}_{k_{m}}} V\left(\varphi_{m}\left( t^{(l-1)}_{k_{m}} \right) - v_{k_{m}}\left( t^{(l-1)}_{k_{m}} \right) \right) \geq \\ \geq \delta \cdot \int\limits_{\overline{t}_{l-1} +\varepsilon}^{\overline{t}_{l} - \varepsilon} e^{2\nu s}|\varphi_{m}(s)-v_{k_{m}}(s)|^{2}ds.
	\end{multline}
	Now we will deal with the two terms in the left-hand side of \eqref{EQ: IntegralIneqSmithTh}. From \eqref{EQ: SmithThDefiphim} we have $\Pi \varphi_{m}\left( t^{(l)}_{k_{m}}\right) = \Pi \overline{v}_{l}$ and therefore by \textbf{(H1)} and Cauchy-Bunyakovsky-Schwarz inequality
	\begin{equation}
	\label{EQ: SmithThToDealWithInt1}
	-V\left( \varphi_{m}\left( t^{(l)}_{k_{m}} \right)-v_{k_{m}} \left( t^{(l)}_{k_{m}} \right) \right) \leq \left|\restr{P}{\mathbb{H}^{-}}\right| \cdot \left|\Pi \overline{v}_{l} - \Pi v_{k_{m}} \left( t^{(l)}_{k_{m}} \right)\right|^{2}.
	\end{equation}
	Since $\Pi$ has finite-dimensional range the sequence $\Pi v_{k_{m}} \left( t^{(l)}_{k_{m}} \right)$ converges strongly to $\Pi \overline{v}_{l}$ as $m \to \infty$ and thus the right-hand side in \eqref{EQ: SmithThToDealWithInt1} tends to zero. The sequence $\varphi_{m}\left( t^{(l-1)}_{k_{m}} \right)$ converges to $\varphi(\overline{t}_{l-1})$ in $\mathbb{H}$. From \eqref{EQ: SmithThDefiphi} we have the property $V(\varphi(\overline{t}_{l-1})-\overline{v}_{l-1}) \leq 0$ and the only use of \textbf{(COM1)} is the following
	\begin{equation}
	\label{EQ: OnlyComUse}
	V\left( \varphi_{m}\left( t^{(l-1)}_{k_{m}} \right) - v_{k_{m}} \left( t^{(l-1)}_{k_{m}} \right) \right) \to V(\varphi(\overline{t}_{l-1}) - \overline{v}_{l-1}) \leq 0.
	\end{equation}
	Thus, the left-hand side of \eqref{EQ: IntegralIneqSmithTh} tends to zero as $m \to \infty$. It follows that there exists a subsequence (we keep the same index) such that $\varphi_{m}(s) - v_{k_{m}}(s) \to 0$ in $\mathbb{H}$ for almost all $s \in [\overline{t}_{l-1}+\varepsilon,\overline{t}_{l}-\varepsilon]$. But since $\varphi_{m}(s) \to \varphi(s)$ it is necessary that $\varphi(s) - v_{k_{m}}(s) \to 0$ for almost all $s \in [\overline{t}_{l-1}+\varepsilon,\overline{t}_{l} - \varepsilon]$. In particular, the strong convergence holds for some $\overline{t}_{l-1} < s_{0} < \overline{t}_{l}$ and this implies (since the cocycle is continuous) the strong convergence of $v_{k_{m}}(s)$ to $\varphi(s)$ uniformly in $s$ from compact subsets of $[s_{0},+\infty)$. In particular, from
	\begin{equation}
	\left| \varphi(\overline{t}_{l}) - v_{k_{m}}\left( t^{(l)}_{k_{m}} \right) \right| \leq \left| \varphi(\overline{t}_{l}) - \varphi \left( t^{(l)}_{k_{m}} \right) \right| + \left| \varphi \left( t^{(l)}_{k_{m}} \right) - v_{k_{m}} \left( t^{(l)}_{k_{m}} \right)  \right|
	\end{equation}
	it follows that $v_{k_{m}}\left( t^{(l)}_{k_{m}} \right)$ convergence strongly to $\varphi(\overline{t}_{l})$ and, consequently, $\varphi( \overline{t}_{l} ) = \overline{v}_{l}$ that leads to a contradiction. Therefore, $v_{k_{m}}\left( t^{(l)}_{k_{m}} \right) \to \overline{v}_{l}$ strongly as $m \to \infty$ for each $l=-1,-2,\ldots$. Now as in the case of \textbf{(COM1)} we can construct the limit trajectory $v^{*}$.
\end{proof}
\begin{corollary}
	\label{COR: CompatTrajectories}
	Suppose that either \textbf{(COM2)} or \textbf{(COM1)} with \textbf{(H1)}, \textbf{(H2)} and \textbf{(H3)} are satisfied; then every bounded in the future semi-trajectory is compact, i.~e. \textbf{(COM3)} holds.
\end{corollary}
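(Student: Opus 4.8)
The plan is to verify \textbf{(COM3)} directly, by showing that the forward orbit $\mathcal{O}^{+} := \{ \psi^{t}(q,u_{0}) \mid t \geq 0 \}$ of an arbitrary bounded semi-trajectory $u(t) := \psi^{t}(q,u_{0})$, with $R := \sup_{t \geq 0} |u(t)| < \infty$, is precompact in the strong topology of $\mathbb{H}$. Since a subset of a metric space is precompact exactly when every sequence in it admits a strongly convergent subsequence, I would fix an arbitrary sequence $u(t_{n})$ and produce such a subsequence. If $\{ t_{n} \}$ is bounded, this is immediate from the joint continuity of $(t,q,u) \mapsto \psi^{t}(q,u)$; hence the only case to treat is $t_{n} \to +\infty$, which I assume henceforth. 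The whole difficulty is therefore pushed onto the tail of the orbit, and I would recycle the two-case extraction mechanism already developed in Lemma \ref{LEM: GeneralCompactnessLemma}.

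First, under \textbf{(COM2)}, for all $n$ large enough that $t_{n} \geq t_{com}$ I would write
\begin{equation*}
u(t_{n}) = \psi^{t_{com}}\!\left( \vartheta^{t_{n}-t_{com}}(q),\, u(t_{n}-t_{com}) \right),
\end{equation*}
exhibiting $u(t_{n})$ as the image, under the compact map $\psi^{t_{com}}$, of the bounded inputs $u(t_{n}-t_{com})$ lying in the ball of radius $R$. Passing to a subsequence along which the base points $\vartheta^{t_{n}-t_{com}}(q)$ converge, the same collective-compactness step used in the \textbf{(COM2)} part of Lemma \ref{LEM: GeneralCompactnessLemma} (joint continuity of the cocycle together with fibrewise compactness over a compact set of base points) shows that $\{ u(t_{n}) \}$ is contained in a precompact set, which delivers the desired convergent subsequence.

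Second, under \textbf{(COM1)} together with \textbf{(H1)}--\textbf{(H3)}, the sequence $u(t_{n})$ is merely bounded, hence only weakly precompact, so after extraction $u(t_{n}) \rightharpoonup w$; the hard part, and the step I expect to be the main obstacle, is to upgrade this to strong convergence, and here I would reproduce the \textbf{(COM1)} argument of Lemma \ref{LEM: GeneralCompactnessLemma}. Concretely, using the invertibility of the maps $G_{T_{1}}^{T_{2}}$ from Lemma \ref{LEM: OperatorG} I would build comparison trajectories whose $\mathbb{H}^{-}$-projection at the terminal time equals $\Pi w$, insert them together with the shifted orbit $u(t_{n}+\cdot)$ into the squeezing inequality \eqref{EQ: H3TrajectoryForm}, and exploit that the compactness of $P$ makes the quadratic form $V$ weakly sequentially continuous on bounded sets. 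The boundary terms then tend to nonpositive limits while the finite-dimensional projections converge strongly, forcing the integral $\int e^{2\nu s} |u(t_{n}+s) - \varphi_{n}(s)|^{2} ds \to 0$ and hence strong convergence on a subinterval, which propagates forward by continuity of the cocycle and yields $u(t_{n}) \to w$ strongly. The only remaining subtlety is the convergence of the base points $\vartheta^{t_{n}}(q)$ needed to set up these comparison trajectories; this is harmless in the periodic setting of interest, where $\mathcal{Q}$ is compact, and more generally whenever the forward orbits of the driving flow $\vartheta^{t}$ are precompact.
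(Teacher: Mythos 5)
Your proposal is in substance the paper's own argument, not a different route. The paper disposes of this corollary in three lines by applying Lemma \ref{LEM: GeneralCompactnessLemma} to the shifted trajectories $v_{k}(t):=u(t+s_{k})$, defined for $t \geq T_{k}:=-s_{k}$, with markers $t^{(l)}_{k}:=l$ for $l=-1,-2,\ldots$; hypothesis 1) of that lemma is immediate, hypothesis 2) is exactly the boundedness of the semi-trajectory, and the lemma's conclusion gives a subsequence with $u(s_{k_{m}})=v_{k_{m}}(0)$ convergent. The two case analyses you sketch --- \textbf{(COM2)} via the compact map $\psi^{t_{com}}$ applied to bounded inputs, and \textbf{(COM1)} via comparison trajectories built with $\left(G_{T_{1}}^{T_{2}}\right)^{-1}$, weak sequential continuity of $V$, strong convergence of the $\Pi$-projections and the squeezing inequality \eqref{EQ: H3TrajectoryForm} --- are precisely the internal proof of Lemma \ref{LEM: GeneralCompactnessLemma}, so you are re-deriving a lemma you could simply cite.

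The one place you genuinely deviate is your closing caveat about convergence of the base points $\vartheta^{t_{n}}(q)$, and it deserves more than a parenthetical: it is the actual crux. The shifts $v_{k}$ are trajectories at $\vartheta^{s_{k}}(q)$, not at $q$, whereas Lemma \ref{LEM: GeneralCompactnessLemma} is stated for trajectories at one fixed $q$, and its proof uses this (the comparison trajectories \eqref{EQ: SmithThDefiphim} and the operators $G$ are anchored at $q$, and the \textbf{(COM2)} step only meets base points $\vartheta^{t^{(l)}_{k}}(q)$ with times in a fixed compact interval). The paper's proof silently treats the shifted trajectories as if they were at $q$, so the gap you flag is present in the paper's own proof as well; it closes exactly in the situations you name --- e.g. for a $\sigma$-periodic cocycle one can take $s_{k} \in \sigma\mathbb{Z}$, which makes the $v_{k}$ honest trajectories at $q$ --- and these are the only situations in which the corollary is actually used (Theorem \ref{TH: ConvergenceTh} and the periodic applications). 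For an arbitrary non-recurrent driving flow neither your argument nor the paper's closes, and in the pure \textbf{(COM2)} branch the statement itself is doubtful: one can construct a diagonal linear cocycle over the shift on $\mathbb{R}$ whose time-$t_{com}$ fibre maps are all compact, yet which possesses a bounded semi-trajectory $u$ with $(u(n),e_{n})=1$ along an orthonormal sequence $e_{n}$, hence non-precompact. So: same approach, but you are (rightly) explicit about a recurrence hypothesis on the base that the paper leaves implicit.
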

\begin{proof}
	Indeed, let $u(t)$, where $t \geq 0$, be a bounded trajectory at $q$. Suppose $s_{k} \to +\infty$ as $k \to +\infty$ and let us show that the sequence $u(s_{k})$ has a limit point in $\mathbb{H}$. Consider the trajectories $v_{k}(t):=u(t+s_{k})$, which are defined for $t \geq T_{k} :=-s_{k}$. For $l=-1,-2,\ldots$ put $t^{(l)}_{k}:=l$. Then the sequence $v_{k}(\cdot)$ along with $t^{(l)}_{k}$ satisfy the conditions of Lemma \ref{LEM: GeneralCompactnessLemma}. In particular, this implies that for some subsequence $k_{m}$, $m=1,2,\ldots$, the sequence $u(s_{k_{m}})=v_{k_{m}}(0)$ converges.
\end{proof}

\begin{proof}[Proof of Theorem \ref{TH: SmithTheorem}]
	Let $u^{*}(\cdot)$ be an amenable trajectory at $q$ from \textbf{(H4)}. Let $\zeta \in \mathbb{H}^{-}$. We are going to construct an amenable trajectory $v^{*}(\cdot)$ at $q$ with the property $\Pi v^{*}(0) = \zeta$. For $k=1,2,\ldots$ with the use of the operator $G_{T_{1}}^{T_{2}}$ defined in \eqref{EQ: OperatorG} we consider the trajectories
	\begin{equation}
	v_{k}(t) := \psi^{t+k}\left(\vartheta^{-k}(q), \left( G_{-k}^{0} \right)^{-1}\left( \zeta ; q, u^{*}(-k) \right) \right)
	\end{equation}
	defined for $t \geq -k$. By the construction and from \textbf{(H3)} we have $V(v_{k}(t) - u^{*}(t)) \leq e^{-2\nu k)}V(v_{k}(-k)-u(-k)) \leq 0$ for all $t \geq -k$ and $\Pi v_{k}(0) = \zeta$. Analogously to \eqref{EQ: OperatorGKeyIneq} from \textbf{(H3)} we get
	\begin{equation}
	\label{EQ: SmithThIntegral}
	\delta^{-1} \cdot \left|\restr{P}{\mathbb{H}^{-}}\right| \cdot \left|\zeta - \Pi u^{*}(0) \right|^{2} \geq \int\limits_{-k}^{0}e^{2\nu s} \left|v_{k}(s)-u^{*}(s)\right|^{2}ds.
	\end{equation}	
	From considering the integral in \eqref{EQ: SmithThIntegral} on intervals $I_{l} := [l,l+1]$, $l=-1,-2,\ldots$, it follows that for every negative integer $l$ and for all sufficiently large $k$ there exists $t^{(l)}_{k} \in I_{l}$ such that the sequence $v_{k}(t^{(l)}_{k})$ is bounded in $\mathbb{H}$. Thus, we are in the conditions of Lemma \ref{LEM: GeneralCompactnessLemma}, which guarantees that there is a complete trajectory $v^{*}$ such that for some subsequence of $v_{k}$'s we have $v_{k_{m}}(t) \to v^{*}(t)$ as $m \to +\infty$ for all $t \in \mathbb{R}$. Since we have $V(u^{*}(t)-v_{k_{m}}(t)) \leq 0$ for all $t \geq -k_{m}$, it follows that $V(u^{*}(t)-v^{*}(t)) \leq 0$ for all $t \in (-\infty,+\infty)$. By Lemma \ref{LEM: AmenableLemma}, the complete trajectory $v^{*}$ is amenable and it is clear that $\Pi v^{*}(0)=\zeta$. Since $\zeta \in \mathbb{H}^{-}$ was arbitrary we get $\Pi \mathfrak{A}(q) = \mathbb{H}^{-}$ and, by Lemma \ref{LEM: PiIsHomeo}, $\Pi$ is a homeomorphism. Thus the proof is finished.
\end{proof}

\begin{remark}
	Despite that we were considering $j>0$ in \textbf{(H2)} some simple results hold in the case $j=0$, i.~e. then the subspace $\mathbb{H}^{-}$ is zero-dimensional. From Lemma \ref{LEM: AmenableLemma} it is easy to see that in the case $j=0$ there may be only one amenable trajectory. So, if it exists the statement of Theorem \ref{TH: SmithTheorem} still takes place and it holds without any assumptions of compactness.
\end{remark}
\section{The map $\Phi$ and inertial manifold $\mathfrak{A}$}
\label{SEC: TheMapPhi}

In this section we suppose that \textbf{(H1)}, \textbf{(H2)}, \textbf{(H3)} and \textbf{(H4)} are satisfied and $\Pi \mathfrak{A}(q) = \mathbb{H}^{-}$. Then it follows that the map $\Pi_{q} := \restr{\Pi}{\mathfrak{A}(q)} \colon \mathfrak{A}(q) \to \mathbb{H}^{-}$ is a homeomorphism for any $q \in \mathcal{Q}$. Consider the function $\Phi(q,\zeta) := \Pi^{-1}_{q} (\zeta) \in \mathfrak{A}(q)$. If $u(\cdot)$ is an amenable trajectory at $q$ it is clear that $u(t)=\Phi(\vartheta^{t}(q),\Pi u(t))$ for $t \in \mathbb{R}$. We state here an open problem linked with the continuity of $\Phi$.

\begin{problem}
When the map $\Phi \colon \mathcal{Q} \times \mathbb{H}^{-} \to \mathbb{H}$ defined above is continuous?
\end{problem}

In \cite{Anikushin2019ND} the author showed the continuity of $\Phi$ for cocycles generated by a certain class of nonlinear almost periodic ODEs (the idea can be used for certain infinite-dimensional systems). Here we give a positive solution to the problem for cocycles over the linear flow on $\mathbb{R}$, periodic cocycles and semi-flows.

\begin{proposition}
	\label{PROP: LinearFlowOnRCont}
	Suppose that the driving system $(\mathcal{Q}, \{ \vartheta^{t} \})$ is the shift on $\mathbb{R}$, i.~e. $\mathcal{Q}=\mathbb{R}$ and $\vartheta^{t}(q) = q + t$ for all $t \in \mathbb{R}$ and $q \in \mathcal{Q}$. Then the map $\Phi$ is continuous.
\end{proposition}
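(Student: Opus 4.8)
The plan is to exploit the covariance of the construction under the shift flow in order to bring both base points into a single fibre, and then to run the integral estimate coming from \textbf{(H3)} at a carefully chosen \emph{moving} time. Fix $(q_0,\zeta_0) \in \mathbb{R} \times \mathbb{H}^{-}$ and a sequence $(q_n,\zeta_n) \to (q_0,\zeta_0)$; write $q_n = q_0 + h_n$ with $h_n \to 0$. Let $u^{*}_{n}(\cdot)$ and $u^{*}_{0}(\cdot)$ be the unique amenable trajectories (Corollary \ref{COR: UniqueAmenable}) at $q_n$ and $q_0$ with $\Pi u^{*}_{n}(0) = \zeta_n$ and $\Pi u^{*}_{0}(0) = \zeta_0$, so that $\Phi(q_n,\zeta_n) = u^{*}_{n}(0)$ and $\Phi(q_0,\zeta_0) = u^{*}_{0}(0)$. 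First I would observe that, since $\vartheta^{t}$ is the shift, the time-translate $w_{n}(t) := u^{*}_{n}(t - h_n)$ is again a complete trajectory, now at the \emph{fixed} base point $q_0$, and that this translation preserves amenability (the defining integral over $(-\infty,0]$ changes only by a finite factor and a finite tail). Thus $w_n$ and $u^{*}_{0}$ are two amenable trajectories at one and the same $q_0$, to which Lemma \ref{LEM: AmenableLemma} and the reasoning behind \eqref{EQ: AmenablePiHomeo} apply.

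The core of the argument is the inequality obtained by taking \eqref{EQ: AmenablePropGeneral} for $w_n$ and $u^{*}_{0}$ to the limit $l \to -\infty$ (legitimate since both are amenable) and bounding $-V$ from above by its $\mathbb{H}^{-}$-part, which yields, for every $t$,
\begin{equation*}
\left| \restr{P}{\mathbb{H}^{-}} \right| \cdot \left| \Pi w_n(t) - \Pi u^{*}_{0}(t) \right|^{2} \geq \delta e^{-2\nu t} \int_{-\infty}^{t} e^{2\nu s} \left| w_n(s) - u^{*}_{0}(s) \right|^{2} ds.
\end{equation*}
The decisive step is to evaluate this at the moving time $t = h_n$: there $\Pi w_n(h_n) = \Pi u^{*}_{n}(0) = \zeta_n$, so the left-hand side equals $\left| \restr{P}{\mathbb{H}^{-}} \right| \cdot \left| \zeta_n - \Pi u^{*}_{0}(h_n) \right|^{2}$, which tends to $0$ because $\zeta_n \to \zeta_0 = \Pi u^{*}_{0}(0)$ and $h_n \to 0$. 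Hence $\int_{-\infty}^{h_n} e^{2\nu s} \left| w_n(s) - u^{*}_{0}(s) \right|^{2} ds \to 0$, and in particular $\int_{-2}^{-1} e^{2\nu s} \left| w_n - u^{*}_{0} \right|^{2} ds \to 0$ for large $n$.

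It then remains to upgrade this $L^{2}$-smallness to pointwise convergence and to transport it forward. By the mean value theorem there are $t_n \in [-2,-1]$ with $\left| w_n(t_n) - u^{*}_{0}(t_n) \right| \to 0$; passing to a subsequence $t_n \to \overline{t} \in [-2,-1]$ and using continuity of $u^{*}_{0}$ yields $w_n(t_n) \to u^{*}_{0}(\overline{t})$. Writing $w_n(t) = \psi^{t - t_n}(\vartheta^{t_n}(q_0), w_n(t_n))$ for $t \geq t_n$ and invoking the joint continuity of the cocycle, I would conclude that $w_n(t) \to u^{*}_{0}(t)$ uniformly on compact subsets of $(\overline{t},+\infty)$, a set which contains $0$. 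Since $h_n \to 0$, this gives $\Phi(q_n,\zeta_n) = u^{*}_{n}(0) = w_n(h_n) \to u^{*}_{0}(0) = \Phi(q_0,\zeta_0)$, and the usual subsequence principle removes the passage to a subsequence. Notably, this route uses no compactness hypothesis, only \textbf{(H3)}, the homeomorphism property of $\Pi_q$ from Lemma \ref{LEM: PiIsHomeo}, and the continuity of the cocycle. I expect the main obstacle to be precisely the choice of the time at which the projection is controlled: a naive estimate at the fixed time $t=0$ fails, since $\Pi w_n(0) = \Pi u^{*}_{n}(-h_n)$ is not directly tied to $\zeta_n$, and it is the shift covariance together with the evaluation at $t = h_n$ that circumvents this; the remaining steps (the mean value extraction and the forward propagation) are then routine.
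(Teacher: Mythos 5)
Your proof is correct and takes essentially the same route as the paper: the paper likewise uses the shift structure to regard all amenable trajectories as living over a single base point (there, trajectories at $0$ with $\Pi u_{k}(t_{k})=\zeta_{k}$), applies the same \textbf{(H3)}-based integral inequality evaluated at the moving time where the projection is controlled, and then concludes with the mean-value extraction and forward-propagation argument of Lemma \ref{LEM: PiIsHomeo}. The only differences are cosmetic — the paper normalizes to the fibre over $0$ instead of $q_{0}$ and argues by contradiction rather than directly with the subsequence principle.
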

\begin{proof}
	In this case we may consider $\Phi$ as a function of $(t,\zeta)$. Suppose $\zeta_{k} \to \overline{\zeta}$ in $\mathbb{H}^{-}$ and $t_{k} \to \overline{t}$ in $\mathbb{R}$. Let $u_{k}(\cdot)$ and $u^{*}(\cdot)$ be amenable trajectories at $0$ such that $\Pi u_{k}(t_{k}) = \zeta_{k}$ and $\Pi u^{*}(\overline{t}) = \overline{\zeta}$. We have to show that $u_{k}(t_{k})=\Phi(t_{k},\zeta_{k}) \to u^{*}(\overline{t}) = \Phi(\overline{t},\overline{\zeta})$. Assuming the contrary, we get a subsequence and a number $\delta_{0} > 0$ such that $|u_{k_{m}}(t_{k_{m}}) - u^{*}(\overline{t})| \geq \delta_{0} > 0$. In the inequality
	\begin{equation}
	|u_{k_{m}}(t_{k_{m}}) - u^{*}(\overline{t})| \leq | u_{k_{m}}(t_{k_{m}}) - u^{*}(t_{k_{m}}) | + |u^{*}(t_{k_{m}}) - u^{*}(\overline{t})|
	\end{equation}
	the second term in the right-hand side tends to zero since $u^{*}(\cdot)$ is continuous. From \textbf{(H3)}, \textbf{(H1)}, the amenability and the Cauchy-Bunyakovsky-Schwarz inequality we get
	\begin{equation}
	\delta^{-1} \cdot \left| \restr{P}{\mathbb{H}^{-}} \right| \cdot |\zeta_{k_{m}} - \overline{\zeta} |^{2} \geq \int\limits_{-\infty}^{t_{k_{m}}} e^{2\nu s} |u_{k_{m}}(s)-u^{*}(s)|ds.
	\end{equation}
	Repeating the same argument as in Lemma \ref{LEM: PiIsHomeo} we get the convergence of $u_{k_{m}}(t_{k_{m}}) \to u^{*}(\overline{t})$ that leads to a contradiction.
\end{proof}

\begin{remark}
	It is clear that the arguing in Proposition \ref{PROP: LinearFlowOnRCont} is applicable if $(\mathcal{Q},\vartheta)$ is a minimal $\sigma$-periodic flow, i.~e. $\mathcal{Q} = \mathbb{R}/\sigma \mathbb{Z}$ and $\vartheta^{t}(q) = q + t$, or if $\mathcal{Q}$ is one point, i.~e. $\psi$ is a semi-flow in $\mathbb{H}$.
\end{remark}

The continuity of $\Phi$ is linked with some nice properties of the cocycle that we state below.

\begin{proposition}
	\label{EQ: Prop1FuncPhi}
	Suppose $\mathcal{Q}$ is compact and $\Phi$ is continuous. Then any bounded complete trajectory is compact.
\end{proposition}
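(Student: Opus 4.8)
The plan is to show that the image $\{ u(t) \mid t \in \mathbb{R} \}$ of a bounded complete trajectory $u(\cdot)$ has compact closure in $\mathbb{H}$. The first observation I would record is that such a trajectory is automatically amenable: if $|u(s)| \leq M$ for all $s$, then, since $\nu > 0$, we have $\int_{-\infty}^{0} e^{2\nu s} |u(s)|^{2} ds \leq M^{2}/(2\nu) < \infty$. Denoting by $q \in \mathcal{Q}$ a base point through which $u(\cdot)$ passes, the same estimate applied to each shift $s \mapsto u(t+s)$ shows $u(t) \in \mathfrak{A}(\vartheta^{t}(q))$ for every $t$, so that the representation $u(t) = \Phi(\vartheta^{t}(q), \Pi u(t))$ holds for all $t \in \mathbb{R}$ by the very definition of $\Phi$ recalled above.

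Next, to establish precompactness of the image, I would take an arbitrary sequence $t_{k} \in \mathbb{R}$ and extract a subsequence along which $u(t_{k})$ converges, exploiting two independent compactness mechanisms. On the one hand, since $\mathcal{Q}$ is compact, the sequence $\vartheta^{t_{k}}(q)$ has a subsequence with $\vartheta^{t_{k_{m}}}(q) \to \overline{q}$ in $\mathcal{Q}$. On the other hand, by \textbf{(H2)} the subspace $\mathbb{H}^{-}$ is finite-dimensional, and since $u(\cdot)$ is bounded the projections $\Pi u(t_{k})$ form a bounded sequence in $\mathbb{H}^{-}$; passing to a further subsequence we may assume $\Pi u(t_{k_{m}}) \to \overline{\zeta}$ in $\mathbb{H}^{-}$.

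Finally I would invoke the assumed joint continuity of $\Phi \colon \mathcal{Q} \times \mathbb{H}^{-} \to \mathbb{H}$, which yields
\begin{equation*}
u(t_{k_{m}}) = \Phi\left(\vartheta^{t_{k_{m}}}(q), \Pi u(t_{k_{m}})\right) \to \Phi(\overline{q}, \overline{\zeta})
\end{equation*}
as $m \to \infty$. Hence every sequence in the image of $u(\cdot)$ admits a convergent subsequence, so the trajectory is compact. The argument is short, and the only point genuinely requiring care is the verification that a bounded complete trajectory is amenable, so that the $\Phi$-representation is available; after that the proof is merely the combination of compactness of $\mathcal{Q}$, finite-dimensionality of $\mathbb{H}^{-}$, and the continuity of $\Phi$. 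I do not expect a real obstacle here, since the whole substance of the statement is concentrated in the hypothesis that $\Phi$ is continuous, which is precisely the open problem flagged earlier.
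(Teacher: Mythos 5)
Your proof is correct and follows essentially the same route as the paper's: both rest on the representation $u(t)=\Phi(\vartheta^{t}(q),\Pi u(t))$ (valid because a bounded complete trajectory is amenable, since $\nu>0$), extract convergent subsequences of $\vartheta^{t_{k}}(q)$ and $\Pi u(t_{k})$ via compactness of $\mathcal{Q}$ and finite-dimensionality of $\mathbb{H}^{-}$, and conclude by continuity of $\Phi$. The only difference is cosmetic: you verify the amenability estimate explicitly up front, whereas the paper invokes it in one line at the end.
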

\begin{proof}
	Suppose $u^{*}$ is a bounded complete trajectory of the cocycle at $q \in \mathcal{Q}$. Let $t_{k} \in \mathbb{R}$ be any sequence of real numbers. We consider a subsequence such that $\zeta_{k_{m}}:=\Pi u(t_{k_{m}})$ converges to some $\overline{\zeta} \in \mathbb{H}^{-}$ and $\vartheta^{t_{k_{m}}}(q)$ converges to some $\overline{q} \in \mathcal{Q}$ as $m \to \infty$. Since $\Phi$ is continuous we have $\Phi(\vartheta^{t_{k_{m}}}(q),\zeta_{k_{m}}) \to \Phi(\overline{q},\overline{\zeta}) \in \mathfrak{A}(\overline{q})$. But $u^{*}$ is bounded and therefore it is amenable, so $u(t_{k_{m}})=\Phi(\vartheta^{t_{k_{m}}}(q),\zeta_{k_{m}})$ converges.
\end{proof}

Consider the set $\mathfrak{A} := \bigcup\limits_{q \in \mathcal{Q}} \{q\} \times \mathfrak{A}(q) \subset \mathcal{Q} \times \mathbb{H}$ that we call \textit{complete amenable set}. Clearly, the set $\mathfrak{A}$ is invariant w. r. t. the skew-product flow $\pi^{t} \colon \mathcal{Q} \times \mathbb{H} \to \mathcal{Q} \times \mathbb{H}$ defined as $\pi^{t} (q, u) := (\vartheta^{t}(q),\psi^{t}(q,u))$ for $t \geq 0$. Thus, $\pi^{t} (\mathfrak{A}) = \mathfrak{A}$ and, in virtue of Corollary \ref{COR: UniqueAmenable}, $\pi^{t}$ is bijective on $\mathfrak{A}$. From the proof of Proposition \ref{EQ: Prop1FuncPhi} it is clear that the continuity of $\Phi$ implies the closedness of the set $\mathfrak{A}$.

\begin{proposition}
	Suppose $\mathcal{Q}$ is a topological manifold without boundary. Then the function $\Phi$ is continuous iff the set $\mathfrak{A}$ is a topological manifold without boundary.
\end{proposition}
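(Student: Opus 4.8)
The plan is to reduce both implications to a single continuous bijection. Let $d:=\dim\mathcal{Q}$, and introduce
$p\colon\mathfrak{A}\to\mathcal{Q}\times\mathbb{H}^{-}$, $p(q,u):=(q,\Pi u)$. Since, under the standing assumptions of this section, $\Pi_{q}$ is a homeomorphism of $\mathfrak{A}(q)$ onto $\mathbb{H}^{-}$, the map $p$ is a continuous bijection whose set-theoretic inverse is exactly $\Psi(q,\zeta):=(q,\Phi(q,\zeta))$. Consequently $\Phi$ is continuous if and only if $\Psi$ is continuous, i.e. if and only if $p$ is a homeomorphism. Note also that $\mathcal{Q}\times\mathbb{H}^{-}$ is a topological manifold without boundary of dimension $N:=d+j$, because $\mathbb{H}^{-}\cong\mathbb{R}^{j}$ by \textbf{(H2)}.

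For the implication ``$\Phi$ continuous $\Rightarrow$ $\mathfrak{A}$ is a manifold'' I would just observe that then $\Psi=p^{-1}$ is continuous, so $p$ is a homeomorphism carrying $\mathfrak{A}$ onto the manifold $\mathcal{Q}\times\mathbb{H}^{-}$; hence $\mathfrak{A}$ is itself a topological manifold without boundary. This direction is immediate.

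The substantial direction is the converse. Assuming $\mathfrak{A}$ is a topological manifold without boundary, of (pure) dimension $m$, the strategy is first to show $m=N$ and then to apply invariance of domain. For the upper bound $m\le N$ I would argue locally: composing a chart $\mathbb{R}^{m}$ of $\mathfrak{A}$ with $p$ and a chart $\mathbb{R}^{N}$ of $\mathcal{Q}\times\mathbb{H}^{-}$ (after shrinking so that the image lands in the target chart) yields a continuous injection $\mathbb{R}^{m}\hookrightarrow\mathbb{R}^{N}$, which by the Brouwer invariance of domain theorem is impossible unless $m\le N$. For the lower bound $m\ge N$ I would use that a second-countable manifold is $\sigma$-compact, write $\mathfrak{A}=\bigcup_{i}K_{i}$ with each $K_{i}$ compact, note that $p|_{K_{i}}$ is an embedding (a continuous injection on a compact set into a Hausdorff space), so each $p(K_{i})$ is closed with $\dim p(K_{i})=\dim K_{i}\le m$, and then apply the countable closed sum theorem of dimension theory to $\mathcal{Q}\times\mathbb{H}^{-}=\bigcup_{i}p(K_{i})$ to conclude $N=\dim(\mathcal{Q}\times\mathbb{H}^{-})\le m$. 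Thus $m=N$.

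Once $m=N$ is known, $p$ is a continuous bijection between two $N$-dimensional manifolds without boundary; one further application of invariance of domain in local charts shows that $p$ maps open sets to open sets, and an open continuous bijection is a homeomorphism. Hence $\Psi=p^{-1}$, and therefore $\Phi$, is continuous. I expect the main obstacle to be precisely the dimension-matching, in particular the lower bound $m\ge N$: it is here that one must invoke the countable sum theorem and use that the manifold $\mathfrak{A}$ has a well-defined pure dimension (so that no lower-dimensional component can occur); the remaining parts are two routine appeals to invariance of domain.
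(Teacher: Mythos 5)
Your proposal is correct and follows the same skeleton as the paper's proof: both reduce the statement to the question of whether the continuous bijection $h(q,u):=(q,\Pi u)$ (your $p$) is a homeomorphism, dispose of the forward implication by noting that continuity of $\Phi$ is continuity of $h^{-1}$, and handle the converse by Brouwer's invariance of domain. The difference is that the paper applies invariance of domain to $h$ in a single sentence, without ever checking that $\dim\mathfrak{A}$ equals $N=\dim\mathcal{Q}+j$ --- a hypothesis that the manifold form of invariance of domain genuinely requires: a continuous bijection from a manifold of the wrong dimension onto an $N$-manifold need not be open (any bijection from an uncountable discrete space onto $\mathbb{R}$ is continuous). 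Your dimension-matching step --- the upper bound $m\le N$ by local invariance of domain applied to the injection, and the lower bound $m\ge N$ by $\sigma$-compactness of $\mathfrak{A}$ together with the countable closed sum theorem and the fact that an $N$-manifold has covering dimension $N$ --- is precisely what the paper's terse proof glosses over, so your write-up is the more complete one. Two small points you should make explicit: (i) the second countability of $\mathfrak{A}$, which you use to get $\sigma$-compactness, does hold here because $\mathfrak{A}$ is a subspace of $\mathcal{Q}\times\mathbb{H}$ with $\mathbb{H}$ separable and $\mathcal{Q}$ second countable (the latter under the usual convention for topological manifolds, and in all of the paper's applications, where $\mathcal{Q}$ is $\mathbb{R}$, a circle, or a compact metric space); (ii) your caveat about pure dimension is not pedantry but essential --- if components of different dimensions were allowed, the converse implication would be false, as the continuous bijection from $\{0\}\sqcup(\mathbb{R}\setminus\{0\})$ (a $0$-manifold disjoint union a $1$-manifold) onto $\mathbb{R}$ shows.
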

\begin{proof}
	The map $h \colon \mathfrak{A} \to \mathcal{Q} \times \mathbb{H}^{-}$ defined as $(q,u) \mapsto (q,\Pi(u))$ is a continuous bijection. The continuity of $\Phi$ implies the continuity of $h^{-1}$ and therefore $\mathfrak{A}$ in its natural topology can be endowed with the manifold structure induced from $\mathcal{Q} \times \mathbb{H}^{-}$. If we know that $\mathfrak{A}$ is a topological manifold without boundary then, by the Brouwer theorem on invariance of domain applied to $h$, the inverse map $h^{-1} \colon (q,\zeta) \to (q,\Phi(q,\zeta))$ is continuous and so is $\Phi$.
\end{proof}

The following proposition shows that the complete amenable set $\mathfrak{A}$ may attract compact semi-trajectories that is related to properties of inertial manifolds. In \cite{Smith1994} R.~A.~Smith under somewhat different conditions showed an exponential attraction for $\mathfrak{A}$ in the case of autonomous reaction-diffusion systems (see Corollary 2 in \cite{Smith1994}).

\begin{proposition}
	\label{PROP: AttractionofA}
	Let $\mathcal{Q}$ be compact and $\Phi$ be continuous. Suppose that the semi-trajectory $u(t)=\psi^{t}(q,u_{0})$, where $t \geq 0$, is compact. Then we have
	\begin{equation}
	\operatorname{dist}\left( u(t),\mathfrak{A}\left( \vartheta^{t}(q) \right) \right) \to 0 \text{ as } t \to +\infty.
	\end{equation}
	Moreover, $|u(t)-\Phi( \vartheta^{t}(q), \Pi u(t) )| \to 0$ as $t \to +\infty$.
\end{proposition}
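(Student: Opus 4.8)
The plan is to prove the stronger ``moreover'' assertion, since the first statement follows from it at once: as $\Phi(\vartheta^{t}(q),\Pi u(t)) \in \mathfrak{A}(\vartheta^{t}(q))$, we have $\operatorname{dist}\left(u(t),\mathfrak{A}(\vartheta^{t}(q))\right) \leq |u(t)-\Phi(\vartheta^{t}(q),\Pi u(t))|$. I argue by contradiction. If the convergence failed, there would exist a sequence $t_{k} \to +\infty$ and $\delta_{0}>0$ with
\begin{equation*}
|u(t_{k})-\Phi(\vartheta^{t_{k}}(q),\Pi u(t_{k}))| \geq \delta_{0} > 0.
\end{equation*}
Using the compactness of the semi-trajectory $\{u(t) : t \geq 0\}$ together with the compactness of $\mathcal{Q}$, after passing to a subsequence I may assume $u(t_{k}) \to \overline{u}$ in $\mathbb{H}$ and $\vartheta^{t_{k}}(q) \to \overline{q}$ in $\mathcal{Q}$; in particular $\Pi u(t_{k}) \to \Pi \overline{u}$.

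The key step is to show $\overline{u} \in \mathfrak{A}(\overline{q})$, i.~e. to construct an amenable complete trajectory through $\overline{u}$ at $\overline{q}$. Consider the shifted trajectories $v_{k}(t):=u(t+t_{k})$, which are trajectories at $\vartheta^{t_{k}}(q)$ defined for $t \geq -t_{k}$. By compactness of the semi-trajectory and a Cantor diagonal procedure I extract a further subsequence along which $v_{k}(n)=u(n+t_{k})$ converges to some $w_{n} \in \mathbb{H}$ for every integer $n \leq 0$ (note $n+t_{k}\geq 0$ eventually), with $w_{0}=\overline{u}$. Passing to the limit in the identity $v_{k}(n)=\psi^{n-m}(\vartheta^{m+t_{k}}(q),v_{k}(m))$, using the continuity of $\psi$ and of $\vartheta^{n-m}$ together with $\vartheta^{m+t_{k}}(q) \to \vartheta^{m}(\overline{q})$, yields $w_{n}=\psi^{n-m}(\vartheta^{m}(\overline{q}),w_{m})$ for $m \leq n \leq 0$. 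Hence
\begin{equation*}
v^{*}(t):=\psi^{t-n}(\vartheta^{n}(\overline{q}),w_{n}), \qquad n \leq \min\{t,0\},
\end{equation*}
is well defined independently of the integer $n$ and defines a complete trajectory at $\overline{q}$ with $v^{*}(0)=\overline{u}$ and $v_{k}(t) \to v^{*}(t)$ for every $t \in \mathbb{R}$. Since $\{u(t):t\geq 0\}$ is bounded, say by $M$, each value $v^{*}(t)$ is a limit of points of norm at most $M$, so $|v^{*}(t)| \leq M$ for all $t$; as $\nu>0$ this makes $v^{*}$ amenable, and therefore $\overline{u}=v^{*}(0) \in \mathfrak{A}(\overline{q})$.

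Finally, because $\overline{u} \in \mathfrak{A}(\overline{q})$ and $\Pi \overline{u}$ is precisely its image under the homeomorphism $\Pi_{\overline{q}}$, we get $\Phi(\overline{q},\Pi \overline{u})=\overline{u}$. The continuity of $\Phi$ then gives
\begin{equation*}
\Phi(\vartheta^{t_{k}}(q),\Pi u(t_{k})) \to \Phi(\overline{q},\Pi \overline{u})=\overline{u},
\end{equation*}
and since also $u(t_{k}) \to \overline{u}$, we conclude $|u(t_{k})-\Phi(\vartheta^{t_{k}}(q),\Pi u(t_{k}))| \to 0$, contradicting the choice of $\delta_{0}$. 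I expect the principal obstacle to be the second step, namely the construction of the limit complete trajectory $v^{*}$ when the base points $\vartheta^{t_{k}}(q)$ vary with $k$, so that Lemma \ref{LEM: GeneralCompactnessLemma} (stated for a single fibre) does not apply verbatim; the compactness of the semi-trajectory is exactly what lets one bypass that lemma and carry out the diagonal-and-continuity construction directly, while simultaneously supplying the uniform bound needed for amenability.
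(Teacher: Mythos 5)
Your proof is correct and follows essentially the same route as the paper's: reduce to the ``moreover'' statement, argue by contradiction, use compactness of the semi-trajectory and of $\mathcal{Q}$ to extract limits $u(t_{k}) \to \overline{u}$, $\vartheta^{t_{k}}(q) \to \overline{q}$, and conclude via $\Phi(\overline{q},\Pi\overline{u})=\overline{u}$ and the continuity of $\Phi$. The only difference is that the paper dismisses the membership $\overline{u} \in \mathfrak{A}(\overline{q})$ with ``it is clear'' (implicitly invoking the standard fact that $\omega$-limit sets of compact semi-trajectories consist of bounded complete trajectories), whereas you verify it explicitly by the diagonal construction of the limit complete trajectory --- a legitimate and correctly executed filling-in of that step.
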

\begin{proof}
	It is enough to prove only the last statement. If we suppose the contrary then there is a number $\delta_{0}>0$ and a sequence $t_{k}$, where $k=1,2,\ldots$, tending to $+\infty$ such that for all $k$ we have
	\begin{equation}
	\label{EQ: TakeMeLimitInertial}
	\left|u(t_{k}) - \Phi(\vartheta^{t_{k}}(q),\Pi u(t_{k})) \right| \geq \delta_{0} > 0.
	\end{equation}
    Since $u(\cdot)$ and $\mathcal{Q}$ are compact we may assume that $u(t_{k}) \to \overline{v}_{0} \in \mathbb{H}$ and $\vartheta^{t_{k}}(q) \to \overline{q} \in \mathcal{Q}$. It is clear that $\overline{v}_{0} \in \mathfrak{A}(\overline{q})$ and therefore $\Phi(\overline{q},\Pi \overline{v}_{0}) = \overline{v}_{0}$. From this, taking it to the limit in \eqref{EQ: TakeMeLimitInertial} as $k \to +\infty$ and using the continuity of $\Phi$ we get a contradiction. 
\end{proof}

\section{Abstract periodic cocycles}
\label{SEC: AbstractPerCoc}
In this section we suppose that the flow $\vartheta$ on $\mathcal{Q}$ is minimal $\sigma$-periodic, i.~e. it is topologically conjugate to a linear flow $\vartheta_{\sigma}^{t}$ on $\mathcal{S}^{1}_{\sigma}=\mathbb{R}/\sigma\mathbb{Z}$ defined as $\vartheta_{\sigma}^{t}(\theta):= \theta + t$, $\theta \in \mathcal{S}^{1}_{\sigma}$. Our aim is to prove Theorems \ref{TH: ConvergenceTh}, \ref{TH: StablePeriodicSmith} and \ref{TH: MasseraTheorem}.

In what follows we are mainly deal with the case of \textbf{(H2)} with $j=1$. It is convenient to identify the one-dimensional subspace $\mathbb{H}^{-}$ with $\mathbb{R}$ to make the orthogonal projector $\Pi$ be a scalar-valued function. In order to prove Theorem \ref{TH: ConvergenceTh} we have to establish several lemmas.
\begin{lemma}
	\label{LEM: ConvergenceLemma}
	Suppose \textbf{(H1)}, \textbf{(H2)} with $j=1$ and \textbf{(H3)} hold. Then we have:
	\begin{enumerate}
		\item[1)] For any bounded in the future amenable trajectory $u^{*}$ (passing through $u^{*}(0)$ at $q$) there exists a $\sigma$-periodic trajectory $v^{*}$ (passing through $v^{*}(0)$ at $q$) such that $u^{*}(t)-v^{*}(t) \to 0$ as $t \to +\infty$.
		\item[2)] For any bounded in the past amenable trajectory $u^{*}$ (passing through $u^{*}(0)$ at $q$) there exists a $\sigma$-periodic trajectory $v^{*}$ (passing through $v^{*}(0)$ at $q$) such that $u^{*}(t)-v^{*}(t) \to 0$ as $t \to -\infty$.
		\item[3)] Let $u(t):=\psi^{t}(q,u_{0})$ be a bounded semi-trajectory passing through $u_{0}=u(0)$ at $q$, that remains in a compact subset $\mathcal{K}$ for $t \geq 0$. Then $u(t)-u(t+\sigma) \to 0$ as $t \to +\infty$.
	\end{enumerate} 
\end{lemma}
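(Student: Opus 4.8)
The plan is to exploit two structural facts available in this periodic setting. First, since $\vartheta$ is topologically conjugate to the linear flow on $\mathcal{S}^{1}_{\sigma}$, we have $\vartheta^{\sigma}=\mathrm{id}$ on $\mathcal{Q}$; hence for any trajectory $u(\cdot)$ at $q$ the shift $u(\cdot+\sigma)$ is again a trajectory at $q$. Second, with $j=1$ we identify $\mathbb{H}^{-}\cong\mathbb{R}$ and the cone $\mathcal{C}=\{V\le 0\}$ splits as $\mathcal{C}^{+}\cup(-\mathcal{C}^{+})$, so for $w\in\mathcal{C}\setminus\{0\}$ the scalar $\Pi w$ is nonzero and its sign decides whether $w\in\mathcal{C}^{+}$ or $w\in-\mathcal{C}^{+}$. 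I would record the monotonicity built into \textbf{(H3)}: by \eqref{EQ: H3TrajectoryForm}, for any two trajectories at $q$ the quantity $e^{2\nu t}V(u(t)-v(t))$ is non-increasing.

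For part (1), I would apply Lemma \ref{LEM: AmenableLemma} to $u^{*}$ and its shift $u^{*}(\cdot+\sigma)$ (both amenable complete trajectories at $q$, the shift being amenable since $\nu>0$) to get $V(u^{*}(t)-u^{*}(t+\sigma))\le 0$ for all $t$. If this ever vanishes, then $u^{*}$ is $\sigma$-periodic by Corollary \ref{COR: UniqueAmenable} and we are done; otherwise $\Pi(u^{*}(t+\sigma)-u^{*}(t))$ is continuous, nowhere zero, hence of constant sign, so the fibre-values $x_{n}:=u^{*}(n\sigma)$ are monotone for the order of $\mathcal{C}^{+}$, and the scalars $\Pi x_{n}$ are monotone and bounded, converging to some $\ell$. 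The goal is then to produce a $\sigma$-periodic trajectory through $\zeta:=\lim_{n}x_{n}$ and to prove $u^{*}(t)-v^{*}(t)\to 0$.

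The hard part is the \emph{strong} convergence $x_{n}\to\zeta$: the order cone is in general not normal (it is not when $P$ is compact), so monotonicity and boundedness alone do not force norm-convergence, and no compactness is assumed in this lemma. To circumvent this I would lean on the compactness-free fibre-homeomorphism of Lemma \ref{LEM: PiIsHomeo}. Applying the inequality \eqref{EQ: AmenablePiHomeo} to the pairs $u^{*}(\cdot+n\sigma)$, $u^{*}(\cdot+m\sigma)$, the Cauchyness of $\{\Pi x_{n}\}$ forces $\{u^{*}(\cdot+n\sigma)\}$ to be Cauchy in the weighted space $L^{2}((-\infty,0],e^{2\nu s}ds;\mathbb{H})$; extracting an a.e.-convergent subsequence and pushing it forward with the jointly continuous cocycle (exactly as in the proof of Lemma \ref{LEM: PiIsHomeo}) yields a limiting amenable complete trajectory $v^{*}$ at $q$ with $\Pi v^{*}(0)=\ell$. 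This certifies $\ell\in\Pi\mathfrak{A}(q)$, and then continuity of $\Pi_{q}^{-1}$ upgrades $\Pi x_{n}\to\ell$ to $x_{n}\to\zeta:=v^{*}(0)=\Pi_{q}^{-1}(\ell)$ in norm. Passing $x_{n+1}=\psi^{\sigma}(q,x_{n})\to\psi^{\sigma}(q,\zeta)$ to the limit shows $\zeta$ is fixed by $\psi^{\sigma}(q,\cdot)$, so (by uniqueness, Corollary \ref{COR: UniqueAmenable}) $v^{*}$ is the desired $\sigma$-periodic trajectory. Finally, as $\{x_{n}\}\cup\{\zeta\}$ is compact, $\psi$ is uniformly continuous on $[0,\sigma]\times\{q\}\times(\{x_{n}\}\cup\{\zeta\})$, which gives $u^{*}(t)-v^{*}(t)\to 0$ uniformly over periods, i.e. as $t\to+\infty$. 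Part (2) is the time-reversed analogue: I would use the shift $u^{*}(\cdot-\sigma)$ and the monotone sequence $u^{*}(-n\sigma)$, boundedness in the past playing the role of boundedness in the future, with convergence concluded as $t\to-\infty$.

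For part (3), where a genuine compactness hypothesis is now available, I would set $\phi(t):=V(u(t)-u(t+\sigma))$ and use that $e^{2\nu t}\phi(t)$ is non-increasing. If $\phi\ge 0$ on $[0,+\infty)$, then $e^{2\nu t}\phi(t)$ is non-negative and non-increasing, so \eqref{EQ: H3TrajectoryForm} bounds $\int_{0}^{\infty}e^{2\nu s}|u(s)-u(s+\sigma)|^{2}ds$; since $u$ stays in the compact $\mathcal{K}$ it is uniformly continuous, hence $s\mapsto|u(s)-u(s+\sigma)|^{2}$ is uniformly continuous and integrable and therefore tends to $0$. If instead $\phi(t_{0})<0$ for some $t_{0}$, then $\phi<0$ for all $t\ge t_{0}$, so $u(\cdot)$ and $u(\cdot+\sigma)$ are strictly comparable with constant sign, and the precompact monotone sequence $u(t_{0}+n\sigma)$ converges (its monotone limit being unique) to a fixed point of the time-$\sigma$ map, producing a $\sigma$-periodic $v^{*}$ with $u(t)-v^{*}(t)\to 0$; consequently $u(t)-u(t+\sigma)\to 0$. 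The single delicate point throughout is the one flagged above in part (1): converting monotone convergence of the scalar projections into strong convergence in $\mathbb{H}$ without any compactness, which the homeomorphism of Lemma \ref{LEM: PiIsHomeo} makes possible.
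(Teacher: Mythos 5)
Your proof is correct, and its overall skeleton is the paper's: monotone bounded scalar projections $\Pi u^{*}(k\sigma)$, the fibre map of Lemma \ref{LEM: PiIsHomeo}, identification of the limit as a fixed point of the time-$\sigma$ map, and uniform continuity of $\psi$ on compact sets to spread convergence over whole periods. You deviate in two places, and the comparison is instructive. In part 1), the paper disposes of the key step in one line: since $\Pi_{q}$ is a homeomorphism, Cauchyness of $\Pi u^{*}(k\sigma)$ is claimed to imply Cauchyness of $u^{*}(k\sigma)$. As you in effect point out, a homeomorphism onto its image does not carry Cauchy sequences to Cauchy sequences, and continuity of $\Pi_{q}^{-1}$ can only be invoked at a point already known to lie in $\Pi\mathfrak{A}(q)$. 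Your detour --- using \eqref{EQ: AmenablePiHomeo} on the shifts $u^{*}(\cdot+n\sigma)$ to get Cauchyness in the weighted space $L^{2}((-\infty,0],e^{2\nu s}ds;\mathbb{H})$, extracting an a.e.-convergent subsequence, pushing it forward with the cocycle to build a limiting amenable trajectory $v^{*}$ with $\Pi v^{*}(0)=\ell$, and only then applying continuity of $\Pi_{q}^{-1}$ at the image point $\ell$ --- is exactly the repair this step needs, so your argument is tighter than the paper's own. In part 3), for the case $V(u(t_{0})-u(t_{0}+\sigma))<0$ the paper stays scalar: monotonicity and boundedness of $\Pi u(t_{0}+k\sigma)$ give summability of the squared increments, hence (via \eqref{EQ: ConvTh1FirstIneq}) integrability of $|u(s)-u(s+\sigma)|^{2}$, and then the same uniform-continuity argument as in the other case; you instead use precompactness from $\mathcal{K}$ plus order-monotonicity to get norm convergence of $u(t_{0}+n\sigma)$ to a fixed point of the period map, which even yields convergence to a periodic trajectory. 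That route works, but your parenthetical ``its monotone limit being unique'' is the one step that needs an argument: either note that any two subsequential limits $a,b$ of a sequence monotone with respect to the closed, convex, pointed cone $\mathcal{C}^{+}$ satisfy $a-b\in\mathcal{C}^{+}\cap(-\mathcal{C}^{+})=\{0\}$, or build complete (hence amenable, since they stay in $\mathcal{K}$) trajectories through the subsequential limits by a diagonal argument --- as in the paper's footnote to the proof of Theorem \ref{TH: MasseraTheorem} --- and then invoke injectivity of $\Pi$ on $\mathfrak{A}(\vartheta^{t_{0}}(q))$. With that sentence filled in, your proof is complete, and in part 1) it is actually more rigorous than the published one.
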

\begin{proof}
	1) If $u^{*}$ is not $\sigma$-periodic then the difference $\Pi u^{*}(t) - \Pi u^{*}(t+\sigma)$ cannot be zero and since $j=1$ it must have constant sign. From this it follows that the sequence $\Pi u^{*}(k\sigma)$, where $k=1,2,\ldots$, is bounded and monotone. In particular, it is fundamental and since $\Pi_{q}$ is a homeomorphism (by Lemma \ref{LEM: PiIsHomeo}) the sequence $u^{*}(k\sigma)$ is also fundamental in $\mathbb{H}$. Denote its limit by $v^{*}_{0}$ and consider $v^{*}(t)=\psi^{t}(q,v^{*}_{0})$ for $t \geq 0$. By the continuity of the cocycle we get $u^{*}(t+k\sigma)-v^{*}(t) \to 0$ as $k \to \infty$. It is easy to see that the definition
	\begin{equation}
	v^{*}(s):=\lim\limits_{k \to +\infty}u^{*}(s+k\sigma)
	\end{equation}
	is correct for $s \in \mathbb{R}$ and defines a $\sigma$-periodic trajectory of the cocycle such that $u^{*}(t)-v^{*}(t) \to 0$ as $t \to +\infty$.
	
	2) As in 1) we have that the sequence $\Pi u^{*}(k\sigma)$, where $k=-1,-2,\ldots$, is bounded monotone and fundamental in $\mathbb{H}$. The required $\sigma$-periodic trajectory $v^{*}$ passes at $q$ through its limit.
	
	3) Case 1: Suppose that $V(u(t_{0})-u(t_{0}+\sigma)) < 0$ for some $t_{0} \geq 0$. From \textbf{(H3)} it follows that $e^{2\nu t} V(u(t)-u(t+\sigma))$ is non-decreasing and, consequently, $V(u(t)-u(t+\sigma)) < 0$ for all $t \geq t_{0}$. From \textbf{(H3)} for $t \geq t_{0}$ we get that
	\begin{equation}
	\label{EQ: ConvTh1FirstIneq}
	e^{2\nu t}\left|\restr{P}{\mathbb{H}^{-}}\right| \cdot |\Pi u(t) - \Pi u(t+\sigma)|^{2} \geq \delta \int_{t_{1}}^{t}e^{2\nu s}|u(s)-u(s+\sigma)|^{2}ds.
	\end{equation}
	It is clear that the function $\Pi u(t) - \Pi u(t+\sigma)$ is of constant sign for $t \geq t_{0}$. Therefore the sequence $\Pi u(t_{0}+k\sigma)$, where $k=1,2,\ldots$, is monotone and bounded since $u$ lies in $\mathcal{K}$. Hence the series $\sum_{k=1}^{\infty} |\Pi u(t_{1}+k\sigma) - \Pi u(t_{1}+k\sigma+\sigma)|$ converges and from \eqref{EQ: ConvTh1FirstIneq} we have
	\begin{equation}
	\begin{split}
	+\infty&> \delta^{-1}\left|\restr{P}{\mathbb{H}^{-}}\right| \cdot \sum_{k=1}^{\infty}|\Pi u(t_{1}+k\sigma)-\Pi u(t_{1}+k\sigma + \sigma)|^{2} \geq\\
	&\geq \sum_{k=1}^{\infty} e^{-2\nu (t_{1}+k\sigma)}\int_{t_{1}}^{t_{1}+k\sigma}e^{2\nu s}|u(s)-u(s+\sigma)|^{2}ds \geq \\
	&\geq e^{-\sigma} \sum_{k=1}^{\infty} \int_{t_{1}+k\sigma - \sigma}^{t_{1}+k\sigma} |u(s)-u(s+\sigma)|^{2} = e^{-\sigma} \int_{t_{1}}^{\infty}|u(s)-u(s+\sigma)|^{2}ds.
	\end{split}	
	\end{equation}
	Therefore the integral $\int_{0}^{\infty}|u(s)-u(s+\sigma)|^{2}ds$ converges. Let us show that from this it follows that $u(t)-u(t+\sigma) \to 0$ as $t \to +\infty$. Indeed, there is a sequence $t_{k}=k\sigma+\theta_{k}$, where $\theta_{k} \in [0,\sigma)$ such that $u(t_{k})-u(t_{k}+\sigma) \to 0$ as $k \to +\infty$. It is clear that $u(s+t_{k})-u(s+t_{k}+\sigma) = \psi^{s}(\vartheta^{k\sigma + \theta_{k}}(q),u(t_{k}))-\psi^{s}(\vartheta^{k\sigma+\sigma +\theta_{k}}(q),u(t_{k}+\sigma))=\psi^{s}(\vartheta^{\theta_{k}}(q),u(t_{k}))-\psi^{s}(\vartheta^{\theta_{k}}(q),u(t_{k}+\sigma))$. Since the cocycle is continuous, it is uniformly continuous on $[0,\sigma] \times \mathcal{Q} \times \mathcal{K}$ and we get that $u(t)-u(t+\sigma) \to 0$ as $t \to +\infty$.
	
	3) Case 2: Suppose that $V(u(t)-u(t+\sigma)) \geq 0$ for all $t \geq 0$. Then from \textbf{(H3)} we get that for any $t \geq 0$
	\begin{equation}
	V(u(0)-u(\sigma)) \geq \delta \int_{0}^{t} e^{2\nu s}|u(s)-u(s+\sigma)|^{2}ds.
	\end{equation}
	It is clear that the integral $\int_{0}^{+\infty} e^{2\nu s}|u(s)-u(s+\sigma)|^{2}ds$ converges and we use the convergence of $\int_{0}^{+\infty} |u(s)-u(s+\sigma)|^{2}ds$ as in 2.1) to show the required statement.
\end{proof}

Now for any semi-trajectory $u(t)=\psi^{t}(q,u_{0})$ we consider the corresponding semi-trajectory of the skew product flow $\pi^{t}$ (see Section \ref{SEC: TheMapPhi}) given as $\gamma(t) = (\vartheta^{t}(q),u(t))$, $t \geq 0$. We denote by $\omega(\gamma_{0})$, where $\gamma_{0}=\gamma(0)$, the $\omega$-\textit{limit set} of $\gamma_{0}$ (or $\gamma$) in $\mathcal{Q} \times \mathbb{H}$. Let $\omega_{q}(\gamma_{0}) \subset \{ q\} \times \mathfrak{A}_{q}$ denote its fibre over $q \in \mathcal{Q}$, i.~e. $\omega(\gamma_{0}) = \bigcup_{q \in \mathcal{Q}} \omega_{q}(\gamma_{0})$.

\begin{lemma}
	\label{LEM: OmegaLimitSet}
	Suppose \textbf{(H1)}, \textbf{(H2)} with $j=1$ and \textbf{(H3)} hold. Then the $\omega$-limit set of any compact semi-trajectory $\gamma$ consists of $\sigma$-periodic trajectories. Moreover, the fibres $\omega_{q}(\gamma_{0})$ are homeomorphic to closed segments of $\mathbb{R}$.
\end{lemma}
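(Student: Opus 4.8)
The plan is to treat the two assertions separately, building everything on the structural facts already available for $j=1$: that bounded complete trajectories are amenable (since $\nu>0$), that the amenable trajectory through a given point is unique (Corollary \ref{COR: UniqueAmenable}), and that $\Pi_{q}$ is a homeomorphism onto its image (Lemma \ref{LEM: PiIsHomeo}). First I would record the standard topological properties of the $\omega$-limit set of the compact semi-trajectory $\gamma$: the set $\omega(\gamma_{0})=\bigcap_{T\ge0}\overline{\{\gamma(t)\mid t\ge T\}}$ is nonempty, compact, connected (as a nested intersection of compact connected sets), invariant under the skew-product flow $\pi^{t}$, and through every one of its points there passes a complete trajectory lying entirely in $\omega(\gamma_{0})$. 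Each such complete trajectory is bounded, hence amenable.

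Next I would prove that every complete trajectory $w$ in $\omega(\gamma_{0})$ is $\sigma$-periodic. The key input is Lemma \ref{LEM: ConvergenceLemma}(3) applied to the original semi-trajectory $u(t)=\psi^{t}(q,u_{0})$, which (remaining in the compact set $\overline{\{u(t)\mid t\ge0\}}$) yields $u(t)-u(t+\sigma)\to0$ as $t\to+\infty$. Fix $(\overline{q},\overline{v})=w(0)\in\omega(\gamma_{0})$ and choose $t_{k}\to+\infty$ with $\vartheta^{t_{k}}(q)\to\overline{q}$ and $u(t_{k})\to\overline{v}$. Using the cocycle identity $\psi^{\sigma}(\vartheta^{t_{k}}(q),u(t_{k}))=u(t_{k}+\sigma)$, continuity of the cocycle, and $u(t_{k}+\sigma)-u(t_{k})\to0$, I obtain $\psi^{\sigma}(\overline{q},\overline{v})=\overline{v}$; and since the flow is $\sigma$-periodic, $\vartheta^{\sigma}(\overline{q})=\overline{q}$. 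Hence $\widetilde{w}(t):=w(t+\sigma)$ is again a complete amenable trajectory at $\overline{q}$ with $\widetilde{w}(0)=\psi^{\sigma}(\overline{q},\overline{v})=\overline{v}=w(0)$, so Corollary \ref{COR: UniqueAmenable} forces $\widetilde{w}=w$, i.e. $w$ is $\sigma$-periodic. This proves the first assertion.

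For the fibres, via the homeomorphism $\Pi_{q}$ each $\omega_{q}(\gamma_{0})$ is homeomorphic to the compact set $\Pi\,\omega_{q}(\gamma_{0})\subset\mathbb{R}$, so it suffices to show $\omega_{q}(\gamma_{0})$ is connected; a compact connected subset of $\mathbb{R}$ is exactly a (possibly degenerate) closed segment. The crux is to extract connectedness of a single fibre from the connectedness of the whole set $\omega(\gamma_{0})$, and here I would exploit the $\sigma$-periodicity just proved: it says precisely that $\pi^{\sigma}$ restricts to the identity on $\omega(\gamma_{0})$, so the monodromy is \emph{trivial}. Concretely, $\pi^{s}\colon\omega_{q}(\gamma_{0})\to\omega_{\vartheta^{s}(q)}(\gamma_{0})$ is a continuous bijection of compact Hausdorff spaces (injectivity from uniqueness of complete amenable trajectories, surjectivity from invariance), so the map $(s,v)\mapsto\pi^{s}(q,v)$, $s\in[0,\sigma]$, descends (it agrees at $s=0$ and $s=\sigma$ by $\sigma$-periodicity) to a continuous bijection $\mathcal{S}^{1}_{\sigma}\times\omega_{q}(\gamma_{0})\to\omega(\gamma_{0})$, hence a homeomorphism. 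Since $\mathcal{S}^{1}_{\sigma}$ is connected, the product is connected iff $\omega_{q}(\gamma_{0})$ is; as $\omega(\gamma_{0})$ is connected, so is the fibre. I expect this trivial-monodromy product reduction to be the main obstacle: one must check carefully that $\pi^{s}$ is a fibrewise homeomorphism and that the descended map is well defined and bijective, which is exactly where the $\sigma$-periodicity and minimality of the base flow enter.
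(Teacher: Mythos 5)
Your proof is correct and follows essentially the same route as the paper's: Lemma \ref{LEM: ConvergenceLemma}(3) gives $u(t)-u(t+\sigma)\to 0$, which combined with continuity of the cocycle and Corollary \ref{COR: UniqueAmenable} makes every complete trajectory in $\omega(\gamma_{0})$ $\sigma$-periodic, and then Lemma \ref{LEM: PiIsHomeo} together with compactness and connectedness of the fibres yields the closed-segment structure. The only difference is one of detail: where the paper asserts that fibre connectedness ``follows from $\sigma$-periodicity and Corollary \ref{COR: UniqueAmenable}'', you make this explicit via the trivial-monodromy homeomorphism $\mathcal{S}^{1}_{\sigma}\times\omega_{q}(\gamma_{0})\cong\omega(\gamma_{0})$, which is a correct (and arguably the intended) way to cash out that claim.
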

\begin{proof}
	Recall that the $\omega$-limit set of a compact semi-trajectory of a semi-flow consists of compact complete trajectories (see, for example, \cite{CarvalhoLangaRobinson2012}). Since any semi-trajectory of the cocycle in virtue of Lemma \ref{LEM: ConvergenceLemma} satisfy $u(t)-u(t+\sigma) \to 0$ as $t \to +\infty$ it is obvious that for $\gamma(t)=(\vartheta^{t}(q),u(t))$, where $t \geq 0$, the set $\omega(\gamma_{0})$ consists of $\sigma$-periodic trajectories. From $\sigma$-periodicity and Corollary \ref{COR: UniqueAmenable} it follows that the fibres $\omega_{q}(\gamma_{0})$ are connected and they are compact since $\omega(\gamma_{0})$ is compact. From Lemma \ref{LEM: PiIsHomeo} we get that $\omega_{q}(\gamma_{0})$ is homeomorphic to $\Pi\omega_{q}(\gamma_{0})$ that is a compact and connected subset of $\mathbb{R}$, i.~e. it is a closed segment.
\end{proof}

Our next aim is to show that the fibres $\omega_{q}(\gamma_{0})$ consist of only one point. Remind that a trajectory $v^{*}$ at $q$ is called \textit{Lyapunov stable} if for every $\varepsilon>0$ there exists $\delta>0$ such that any trajectory $u(t)=\psi^{t}(q,u_{0})$ with $|u(0)-v^{*}(0)|<\delta$ satisfy $|u(t)-v^{*}(t)| < \varepsilon$ for all $t \geq 0$.

\begin{lemma}
	\label{LEM: LyapunovInstabilityLemma}
	Suppose \textbf{(H1)},\textbf{(H2)} with $j=1$, \textbf{(H3)} and one of \textbf{(COM1)} or \textbf{(COM2)} are satisfied. Let $v^{*}$ be a Lyapunov unstable $\sigma$-periodic trajectory at $q \in \mathcal{Q}$. Then for every sufficiently small $\varepsilon>0$ there exists an amenable trajectory $v^{\varepsilon}$ and a number $\theta_{\varepsilon} \in [0,\sigma]$ such that
	\begin{enumerate}
		\item[1)] $|v^{\varepsilon}(t)-v^{*}(t)| \leq \varepsilon$ for $t \in (-\infty,\theta_{\varepsilon})$,
		\item[2)] $|v^{\varepsilon}(\theta_{\varepsilon})-v^{*}(\theta_{\varepsilon})| = \varepsilon$,
		\item[3)] For any other $\sigma$-periodic trajectory $u^{*}$ at $q$ the value $\Pi u^{*}(\theta_{\varepsilon})$ does not lie between $\Pi v^{\varepsilon}(\theta_{\varepsilon})$ and $\Pi v^{*}(\theta_{\varepsilon})$.
	\end{enumerate}
\end{lemma}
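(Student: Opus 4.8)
The plan is to exploit the Lyapunov instability of $v^{*}$ to manufacture trajectories that shadow $v^{*}$ for a long time and then touch the boundary of its $\varepsilon$-tube, to extract from them a complete amenable trajectory via the compactness machinery of Lemma~\ref{LEM: GeneralCompactnessLemma}, and finally to use the one-dimensionality $j=1$ together with the squeezing property \textbf{(H3)} to force the extremality condition in item~3).

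First I would fix $\varepsilon$ smaller than the instability threshold $\varepsilon_{0}$ of $v^{*}$ and, for each $n$, choose $w_{n}$ with $|w_{n}-v^{*}(0)|<1/n$ whose forward trajectory $u_{n}(t)=\psi^{t}(q,w_{n})$ leaves the $\varepsilon$-ball around $v^{*}$. Let $\theta_{n}$ be the first time with $|u_{n}(\theta_{n})-v^{*}(\theta_{n})|=\varepsilon$; uniform continuity of the cocycle on compact time intervals (as $w_{n}\to v^{*}(0)$) gives $\theta_{n}\to+\infty$. Writing $\theta_{n}=k_{n}\sigma+\tau_{n}$ with $\tau_{n}\in[0,\sigma)$ and using $\sigma$-periodicity ($\vartheta^{k_{n}\sigma}(q)=q$ and $v^{*}(\cdot)=v^{*}(\cdot+k_{n}\sigma)$), the shifted maps $\hat{u}_{n}(t):=u_{n}(t+k_{n}\sigma)$ are trajectories at $q$ defined for $t\ge -k_{n}\sigma$ that satisfy $|\hat{u}_{n}(t)-v^{*}(t)|<\varepsilon$ on $[-k_{n}\sigma,\tau_{n})$ and $=\varepsilon$ at $t=\tau_{n}$.

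Next I would apply Lemma~\ref{LEM: GeneralCompactnessLemma} with $t^{(l)}_{n}:=l$: for each negative integer $l$ the points $\hat{u}_{n}(l)$ lie within $\varepsilon$ of the bounded periodic trajectory, hence are bounded uniformly in $n$, so (using \textbf{(COM1)} or \textbf{(COM2)}) a subsequence $\hat{u}_{n_{m}}$ converges pointwise to a complete trajectory $v^{\varepsilon}$ at $q$; passing to a further subsequence, $\tau_{n_{m}}\to\theta_{\varepsilon}\in[0,\sigma]$. Letting $m\to\infty$ in the tube inequality yields $|v^{\varepsilon}(t)-v^{*}(t)|\le\varepsilon$ for $t<\theta_{\varepsilon}$, which is item~1) and, with boundedness of $v^{*}$, gives the amenability of $v^{\varepsilon}$ (the integral $\int_{-\infty}^{0}e^{2\nu s}|v^{\varepsilon}(s)|^{2}ds$ converges). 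Upgrading pointwise to locally uniform convergence near $\theta_{\varepsilon}$ through continuity of the cocycle (anchored at a fixed time before $\theta_{\varepsilon}$), the identity $|\hat{u}_{n_{m}}(\tau_{n_{m}})-v^{*}(\tau_{n_{m}})|=\varepsilon$ passes to the limit as $|v^{\varepsilon}(\theta_{\varepsilon})-v^{*}(\theta_{\varepsilon})|=\varepsilon$, giving item~2).

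The main obstacle is item~3), which I would prove by contradiction using the scalar monotonicity available when $j=1$. The key preliminary fact (already exploited in Lemma~\ref{LEM: ConvergenceLemma}) is that for two \emph{distinct} amenable trajectories $u,v$ at $q$ the scalar $\Pi u(t)-\Pi v(t)$ never vanishes --- otherwise injectivity of $\Pi$ on amenable fibres (Lemma~\ref{LEM: PiIsHomeo}, Corollary~\ref{COR: UniqueAmenable}) would force $u\equiv v$ --- hence it keeps constant sign. Assume without loss of generality $\Pi v^{\varepsilon}(\theta_{\varepsilon})>\Pi v^{*}(\theta_{\varepsilon})$ and suppose some other $\sigma$-periodic $u^{*}$ satisfies $\Pi v^{*}(\theta_{\varepsilon})<\Pi u^{*}(\theta_{\varepsilon})<\Pi v^{\varepsilon}(\theta_{\varepsilon})$. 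Constant sign forces $\Pi u^{*}(t)>\Pi v^{*}(t)$ for all $t$, so at the initial time $-k_{n_{m}}\sigma$ (where $\hat{u}_{n_{m}}(-k_{n_{m}}\sigma)=w_{n_{m}}\to v^{*}(0)$) we get $\Pi\hat{u}_{n_{m}}(-k_{n_{m}}\sigma)<\Pi u^{*}(-k_{n_{m}}\sigma)$, while at $\tau_{n_{m}}$ (where $\Pi\hat{u}_{n_{m}}(\tau_{n_{m}})\to\Pi v^{\varepsilon}(\theta_{\varepsilon})>\Pi u^{*}(\theta_{\varepsilon})$) we get the reverse inequality. By the intermediate value theorem there is $s_{m}\in(-k_{n_{m}}\sigma,\tau_{n_{m}})$ with $\Pi\hat{u}_{n_{m}}(s_{m})=\Pi u^{*}(s_{m})$, i.e. $\hat{u}_{n_{m}}(s_{m})-u^{*}(s_{m})\in\mathbb{H}^{+}$ and therefore $V(\hat{u}_{n_{m}}(s_{m})-u^{*}(s_{m}))\ge 0$ by \textbf{(H1)}. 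Since $e^{2\nu t}V(\hat{u}_{n_{m}}(t)-u^{*}(t))$ is non-increasing by \textbf{(H3)}, evaluating at $t=-k_{n_{m}}\sigma\le s_{m}$ gives $V(w_{n_{m}}-u^{*}(0))\ge 0$; letting $m\to\infty$ yields $V(v^{*}(0)-u^{*}(0))\ge 0$. But $v^{*}$ and $u^{*}$ are distinct amenable trajectories, so the strict form of Lemma~\ref{LEM: AmenableLemma} (its proof gives $V(v^{*}(0)-u^{*}(0))\le -\delta\int_{-\infty}^{0}e^{2\nu s}|v^{*}(s)-u^{*}(s)|^{2}ds<0$) is contradicted. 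The symmetric case $\Pi v^{\varepsilon}(\theta_{\varepsilon})<\Pi v^{*}(\theta_{\varepsilon})$ is identical, and the boundary cases $\Pi u^{*}(\theta_{\varepsilon})\in\{\Pi v^{*}(\theta_{\varepsilon}),\Pi v^{\varepsilon}(\theta_{\varepsilon})\}$ correspond to coincidence of trajectories and so do not represent a value lying strictly between. I expect the only delicate technical points to be the diagonal limit $\hat{u}_{n_{m}}(\tau_{n_{m}})\to v^{\varepsilon}(\theta_{\varepsilon})$ (handled by local uniform convergence) and the orientation bookkeeping in the crossing argument.
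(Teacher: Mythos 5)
Your proposal is correct and follows essentially the same route as the paper: the same construction of $v^{\varepsilon}$ from time-shifted first-exit trajectories combined with Lemma~\ref{LEM: GeneralCompactnessLemma}, and the same intermediate-value/sign-change argument for item~3) resting on \textbf{(H3)} and the strict negativity of $V$ between distinct amenable trajectories. The only (cosmetic) difference is the endgame of item~3): you propagate $V\ge 0$ backward via monotonicity of $e^{2\nu t}V$ and contradict $V(v^{*}(0)-u^{*}(0))<0$ in the limit, whereas the paper bounds the integral in \eqref{EQ: H3TrajectoryForm} by zero and forces $u^{*}\equiv v_{k}$ on the whole interval.
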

\begin{proof}
	Let $\varepsilon>0$ be sufficiently small such that for every $k=1,2,\ldots$ there exists a trajectory $\widetilde{v}_{k}(t):=\psi^{t}(q,\widetilde{v}_{k}(0))$ and a number $T_{k} > 0$ such that
	\begin{enumerate}
		\item[a)] $|\widetilde{v}_{k}(0)-v^{*}(0)| < \frac{1}{k}$,
		\item[b)] $|\widetilde{v}_{k}(t)-v^{*}(t)| < \varepsilon$ for $t \in [0,T_{k})$
		\item[c)] $|\widetilde{v}_{k}(T_{k})-v^{*}(T_{k})|=\varepsilon$.
	\end{enumerate}
   Let $T_{k}=m_{k}\sigma + \theta_{k}$, where $m_{k} \in \mathbb{Z}$ and $\theta \in [0,\sigma)$, and consider $v_{k}(t):=\widetilde{v}_{k}(t+m_{k}\sigma)$ defined for $t \geq -m_{k}\sigma$. From item b) and $\sigma$-periodicity of $v^{*}$ we have $|v_{k}(t)-v^{*}(t)| \leq \varepsilon$ for all $t \in [-m_{k}\sigma,0]$. From Lemma \ref{LEM: GeneralCompactnessLemma} we can obtain a subsequence of $v_{k}$ (we keep the same index) that converges to some amenable trajectory $v^{\varepsilon}$ (the amenability follows at once from the boundedness of $v^{\varepsilon}$ on $(-\infty,0]$ ). We may assume also that $\theta_{k}$ converges to some $\theta_{\varepsilon} \in [0,\sigma]$. From the properties in items a), b), c) it is clear that for the chosen $v^{\varepsilon}$ and $\theta_{\varepsilon}$ we have items 1) and 2) of the lemma satisfied.
   
   To show the property in item 3) suppose that there exists a $\sigma$-periodic trajectory $u^{*}$ such that $\Pi u^{*}$ lies between $\Pi v^{\varepsilon}$ and $\Pi v^{*}$. From item a), $\sigma$-periodicity of $v^{*}$ and the continuity of the cocycle we have that $v_{k}(-m_{k}\sigma+\theta_{\varepsilon}) \to v^{*}(\theta_{\varepsilon})$ as $k \to \infty$ and, in particular, $\Pi v_{k}(-m_{k}\sigma+\theta_{\varepsilon}) \to \Pi v^{*}(\theta_{\varepsilon})$. Moreover, we also have $v_{k}(\theta_{\varepsilon}) \to v^{\varepsilon}(\theta_{\varepsilon})$. Therefore, the value $\Pi u^{*}(\theta_{\varepsilon})$ must lie between $\Pi v_{k}(-m_{k}\sigma+\theta_{\varepsilon})$ and $\Pi v_{k}(\theta_{\varepsilon})$ if $k$ is sufficiently large. From this it follows that the function $\Pi(u^{*}(s)-v_{k}(s))$ changes the sign as $s$ varies in $[-m_{k}\sigma,\theta_{\varepsilon}]$ and, consequently, there is $s_{k} \in (-m_{k}\sigma, \theta_{\varepsilon} ]$ such that $|\Pi u^{*}(s_{k})-\Pi v_{k}(s_{k})| = 0$. From \textbf{(H3)} (or \eqref{EQ: H3TrajectoryForm}) we get
   \begin{equation}
   \label{EQ: LemmaLyapunovUnstability}
   \begin{split}
   -e^{2\nu s_{k}} V(u^{*}(s_{k})-v_{k}(s_{k})) &+ e^{-2\nu m_{k}\sigma}V(u^{*}(0)-v_{k}(-m_{k}\sigma))\\ &\geq \delta \int_{-m_{k}\sigma}^{s_{k}} e^{2\nu s}|u^{*}(s)-v_{k}(s)|^{2}ds.
   \end{split}
   \end{equation}
   Since $v^{*}$ and $u^{*}$ are distinct amenable trajectories, by Lemma \ref{LEM: AmenableLemma} we have $V(u^{*}(0)-v^{*}(0)) < 0$ and, consequently, $V(u^{*}(0)-v_{k}(-m_{k}\sigma)) < 0$ for sufficiently large $k$. From this, \textbf{(H1)} and \eqref{EQ: LemmaLyapunovUnstability} it follows that
   \begin{equation}
   0=e^{2\nu s_{k}} \left| \restr{P}{\mathbb{H}^{-}} \right| \cdot \left| \Pi u^{*}(s_{k}) - \Pi v_{k}(s_{k}) \right|^{2} \geq \delta \int_{-m_{k}\sigma}^{s_{k}}e^{2\nu s}|u^{*}(s)-v_{k}(s)|^{2}ds.
   \end{equation}
   Thus, $u^{*}(s)-v_{k}(s)=0$ for $s \in [-m_{k}\sigma,s_{k}]$ and, consequently, for all $s \geq -m_{k}\sigma$ that leads to a contradiction.
\end{proof}

\begin{proof}[Proof of Theorem \ref{TH: ConvergenceTh}]
	From Lemma \ref{LEM: OmegaLimitSet} we obtain that the $\omega$-limit set of $\gamma_{0}=\gamma(0)$, where $\gamma(t)=(\vartheta^{t}(q),u(t))$, consists of $\sigma$-periodic trajectories and its fibres $\omega_{q}(\gamma_{0})$ homeomorphic to closed segments in $\mathbb{R}$. Suppose that the fibre $\omega_{q}(\gamma_{0})$ is a non-point segment. Then there exists a $\sigma$-periodic trajectory $u^{*}$ corresponding to its interior point. Since $u^{*}$ is a non-isolated (from both sides) $\sigma$-periodic trajectory it must be Lyapunov stable due to Lemma \ref{LEM: LyapunovInstabilityLemma}. But since $u^{*}$ is Lyapunov stable and lies in the $\omega$-limit set of $u$ it must be the only $\sigma$-periodic trajectory in the $\omega$-limit set. Indeed, there is a number $\delta>0$ such that any semi-trajectory $\widetilde{u}(t)=\psi^{t}(q,\widetilde{u}(0))$ with $|\widetilde{u}(0)-u^{*}(0)| < \delta$ satisfy $|\widetilde{u}(t)-u^{*}(t)| < \varepsilon$ for all $t \geq 0$. There is a sequence $t_{k} \to +\infty$ such that $u(t_{k}) \to u^{*}(0)$ and $\vartheta^{t_{k}}(q) \to q$ as $k \to +\infty$. Since the semi-trajectory $u$ is compact and the cocycle is continuous we may assume that $t_{k} = m_{k} \sigma$, where $m_{k} \in \mathbb{Z}_{+}$. Therefore, for all sufficiently large $k$ we must have $|u(m_{k} \sigma) - u^{*}(m_{k}\sigma)| < \delta$ and, consequently, $|u(t)-u^{*}(t)|<\varepsilon$ for all sufficiently large $t$. This proves that $\omega_{q}(\gamma_{0})$ is a one point set.
	
	Now let $u^{*}$ be the unique $\sigma$-periodic trajectory. It is clear that the sequence $u(k\sigma)$, $k=1,2,\ldots$, converges to $u^{*}(0)$ and, consequently, $u(k\sigma + s) \to u^{*}(s)$ for $s \in [0,\sigma]$. For any $t \geq 0$ let $t=k \sigma + s$, where $s \in [0,\sigma)$ and $k \in \mathbb{Z}_{+}$. Then we have $u(t)-u^{*}(t) = u(k \sigma + s)-u^{*}(s) \to 0$ as $t \to +\infty$. The theorem is proved.
\end{proof}

Let $v^{*}$ be an amenable trajectory passing through $v^{*}(0)$ at $q \in \mathcal{Q}$. We call $v^{*}$ \textit{amenable stable} if for every $\varepsilon>0$ there exists $\delta>0$ such that any amenable trajectory $u^{*}$ at $q$ with $|u^{*}(0)-v^{*}(0)| < \delta$ satisfy $|u^{*}(t)-v^{*}(t)| < \varepsilon$ for all $t \geq 0$. We have the following lemma.

\begin{lemma}
	\label{LEM: AmenStIsLyapunSt}
	Suppose \textbf{(H1)}, \textbf{(H2)} with $j=1$, \textbf{(H3)} and one of \textbf{(COM1)} or \textbf{(COM2)} are satisfied. Then any amenable stable $\sigma$-periodic trajectory $v^{*}$ is Lyapunov stable.
\end{lemma}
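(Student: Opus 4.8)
The plan is to argue by contradiction: assuming $v^{*}$ is amenable stable but Lyapunov unstable, I want to manufacture a genuinely amenable trajectory that starts arbitrarily close to $v^{*}(0)$ at $q$ yet leaves a \emph{fixed} neighbourhood, which contradicts amenable stability. So suppose there is $\varepsilon_{0}>0$ and trajectories $u_{k}(t)=\psi^{t}(q,u_{k}(0))$ with $u_{k}(0)\to v^{*}(0)$ for which first exit times $\tau_{k}$, characterized by $|u_{k}(\tau_{k})-v^{*}(\tau_{k})|=\varepsilon_{0}$ and $|u_{k}(t)-v^{*}(t)|<\varepsilon_{0}$ for $0\le t<\tau_{k}$, exist. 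A first, routine step checks that $\tau_{k}\to+\infty$: if a subsequence stayed bounded, the continuity of the cocycle together with $u_{k}(0)\to v^{*}(0)$ would force $u_{k}(\tau_{k})\to v^{*}(\overline{\tau})$, contradicting $|u_{k}(\tau_{k})-v^{*}(\tau_{k})|=\varepsilon_{0}$.

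The conceptual core is to use $j=1$ to split the deviation of $u_{k}$ from $v^{*}$ into a "vertical" $\mathbb{H}^{+}$-part, governed by the squeezing \textbf{(H3)}, and a "horizontal" $\mathbb{H}^{-}$-part, governed by monotonicity. Since \textbf{(H1)}--\textbf{(H4)} and one of \textbf{(COM1)},\textbf{(COM2)} hold (note \textbf{(H4)} holds because $v^{*}$, being $\sigma$-periodic, is amenable over the whole orbit $\mathcal{Q}=\mathcal{S}^{1}_{\sigma}$), Theorem \ref{TH: SmithTheorem} gives $\Pi\mathfrak{A}(q)=\mathbb{H}^{-}$, so $\Phi$ is defined and, by Proposition \ref{PROP: LinearFlowOnRCont} and the remark after it, continuous; as $\mathcal{Q}$ is compact, $\Phi$ is uniformly continuous on $\mathcal{Q}$ times any bounded subset of $\mathbb{H}^{-}$. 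For each $k$ I compare $u_{k}$ with the amenable trajectory $w_{k}$ at $q$ fixed by $\Pi w_{k}(0)=\Pi u_{k}(0)$ (unique by Corollary \ref{COR: UniqueAmenable}). Continuity of $\Phi$ gives $w_{k}(0)\to v^{*}(0)$, whence amenable stability yields $|w_{k}(t)-v^{*}(t)|<\varepsilon_{0}/2$ for all $t\ge0$ and large $k$. Writing $z_{k}=u_{k}-w_{k}$, one has $\Pi z_{k}(0)=0$, so $z_{k}(0)\in\mathbb{H}^{+}$, $V(z_{k}(0))\ge0$ and $z_{k}(0)\to0$, while the exit forces $|z_{k}(\tau_{k})|>\varepsilon_{0}/2$.

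I then pass to the limit. Writing $\tau_{k}=m_{k}\sigma+\theta_{k}$ with $\theta_{k}\in[0,\sigma)$ and shifting $\hat{u}_{k}(t)=u_{k}(t+m_{k}\sigma)$, $\hat{w}_{k}(t)=w_{k}(t+m_{k}\sigma)$ (both trajectories at $q$ by $\sigma$-periodicity, with $\hat{w}_{k}$ still amenable), Lemma \ref{LEM: GeneralCompactnessLemma} applied along a common subsequence yields complete trajectories $\hat{u}^{*},\hat{w}^{*}$ with $\hat{u}_{k}(t)\to\hat{u}^{*}(t)$, $\hat{w}_{k}(t)\to\hat{w}^{*}(t)$ and $\theta_{k}\to\theta_{*}\in[0,\sigma]$. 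Here $\hat{w}^{*}$ is amenable and stays within $\varepsilon_{0}/2$ of $v^{*}$; moreover the bound $V(z_{k}(t))\le e^{-2\nu t}V(z_{k}(0))$ read off from \eqref{EQ: H3TrajectoryForm} passes to the limit (strong convergence plus continuity of $V$) to give $V(\hat{u}^{*}(t)-\hat{w}^{*}(t))\le0$ for all $t$, so $\hat{u}^{*}$ is amenable by Lemma \ref{LEM: AmenableLemma}(2). Since $|\hat{u}^{*}(\theta_{*})-\hat{w}^{*}(\theta_{*})|=\varepsilon_{0}/2\neq0$, the two are distinct amenable trajectories, so in fact $V(\hat{u}^{*}-\hat{w}^{*})<0$ and $\Pi(\hat{u}^{*}-\hat{w}^{*})$ has constant sign. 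An ordering argument along the lines of the proof of Lemma \ref{LEM: LyapunovInstabilityLemma} then aims to exhibit, between $\hat{w}^{*}$ and $\hat{u}^{*}$, an amenable trajectory starting within any prescribed $\delta$ of $v^{*}(0)$ but separated from $v^{*}$ by a fixed amount, the desired contradiction.

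The main obstacle is precisely the coupling of the two components for a trajectory $u_{k}$ that \emph{need not be amenable}: the squeezing \textbf{(H3)} controls only the $\mathbb{H}^{+}$ part of $z_{k}$ and does not by itself prevent $u_{k}$ from drifting away from its amenable companion in the $\mathbb{H}^{-}$ direction, where $V(z_{k})$ can turn negative and the naive weighted-$L^{2}$ estimate degenerates. This is where the one-dimensionality is indispensable: one must show that any such horizontal drift of the non-amenable $u_{k}$ is shadowed, through the uniform continuity of $\Phi$ and the order preservation of the reduced period map on $\mathbb{H}^{-}\cong\mathbb{R}$, by a genuine amenable trajectory that must itself leave the $\varepsilon_{0}/2$-neighbourhood of $v^{*}$; in other words, Lyapunov instability is always detected at the level of amenable trajectories. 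The compactness hypotheses \textbf{(COM1)}/\textbf{(COM2)} enter only to legitimize the extraction of $\hat{u}^{*},\hat{w}^{*}$ via Lemma \ref{LEM: GeneralCompactnessLemma}.
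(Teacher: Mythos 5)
Your construction up to the extraction of the limit trajectories is sound, but it is essentially a re-derivation, by a different route (through Theorem \ref{TH: SmithTheorem}, the map $\Phi$ and amenable companions), of items 1) and 2) of Lemma \ref{LEM: LyapunovInstabilityLemma}, which you could simply have cited: you produce an amenable trajectory $\hat{u}^{*}$ at $q$ that stays within $\varepsilon_{0}$ of $v^{*}$ for all $t<\theta_{*}$ and reaches distance $\varepsilon_{0}$ at $t=\theta_{*}$. The genuine gap is the final step, which you leave as ``an ordering argument \ldots aims to exhibit \ldots the desired contradiction''. The existence of such an escaping amenable trajectory does not by itself contradict amenable stability: amenable stability only constrains amenable trajectories whose value at time $0$ is $\delta$-close to $v^{*}(0)$, whereas $\hat{u}^{*}(0)$ is merely $\varepsilon_{0}$-close. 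The natural fix --- shift $\hat{u}^{*}$ backward by $l\sigma$ so that it starts near its backward limit --- works only if that backward limit (a $\sigma$-periodic trajectory, by Lemma \ref{LEM: ConvergenceLemma}, item 2) equals $v^{*}$; if it is a different periodic trajectory $w\neq v^{*}$, which is entirely consistent with amenable stability of $v^{*}$, then no shift of $\hat{u}^{*}$ starts arbitrarily close to $v^{*}(0)$ and no contradiction arises. In other words, your closing claim that Lyapunov instability is ``always detected at the level of amenable trajectories'' is exactly the assertion that has to be proved, and your argument does not prove it.

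The paper closes precisely this gap with machinery your proposal never engages. It runs the construction of Lemma \ref{LEM: LyapunovInstabilityLemma} for every sufficiently small $\varepsilon$ (not one fixed $\varepsilon_{0}$), attaches to each escaping amenable trajectory $v^{\varepsilon}$ its backward periodic limit $w^{\varepsilon}$ via Lemma \ref{LEM: ConvergenceLemma}, uses amenable stability exactly once --- to guarantee $w^{\varepsilon}\neq v^{*}$ --- then shows $|w^{\varepsilon}(t)-v^{*}(t)|\leq\varepsilon$ and that $\Pi w^{\varepsilon}$ and $\Pi v^{\varepsilon}$ lie on the same side of $\Pi v^{*}$, and finally lets $\varepsilon\to 0$: a pigeonhole choice of a one-sided subsequence $\varepsilon_{k}\to 0$ forces the periodic values $\Pi w^{\varepsilon_{k}}(\theta_{\varepsilon_{1}})$ eventually to lie strictly between $\Pi v^{*}(\theta_{\varepsilon_{1}})$ and $\Pi v^{\varepsilon_{1}}(\theta_{\varepsilon_{1}})$. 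This collides with item 3) of Lemma \ref{LEM: LyapunovInstabilityLemma} --- no $\sigma$-periodic trajectory can lie between --- which is the decisive ingredient. Your proposal states and proves no analog of item 3) (although your $\hat{u}_{k}$, which start near $v^{*}$ at time $-m_{k}\sigma$, could be endowed with it by the same sign-change plus \textbf{(H3)} argument used in the paper), and without it, together with the $\varepsilon\to 0$ accumulation of periodic trajectories, the contradiction cannot be reached.
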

\begin{proof}
	Suppose that $v^{*}$ is not Lyapunov stable. By Lemma \ref{LEM: LyapunovInstabilityLemma} give us an amenable trajectory $v^{\varepsilon}$ with the properties 1)-3) for all sufficiently small $\varepsilon>0$. By Lemma \ref{LEM: ConvergenceLemma} there is a $\sigma$-periodic trajectory $w^{\varepsilon}$ such that $|v^{\varepsilon}(t)-w^{\varepsilon}(t)| \to 0$ as $t \to -\infty$. Since $v^{*}$ is amenable stable and $|v^{\varepsilon}(\theta_{\varepsilon})-v^{*}(\theta_{\varepsilon})|=\varepsilon$ the trajectory $v^{\varepsilon}$(t) does not converge to $v^{*}(t)$ as $t \to -\infty$. From this it follows that $v^{*}$ and $w^{\varepsilon}$ are distinct $\sigma$-periodic trajectories. Moreover, for $t \leq 0$ and $l=1,2,\ldots$ we have
	\begin{equation}
	\label{EQ: Lemma9Smith}
	|v^{*}(t)-w^{\varepsilon}(t)| \leq |v^{*}(t-l\sigma)-v^{\varepsilon}(t-l\sigma)| + |v^{\varepsilon}(t-l\sigma) - w^{\varepsilon}(t-l\sigma)|.
	\end{equation}
	Taking it to the limit in \eqref{EQ: Lemma9Smith} as $l \to +\infty$ and with the use of property 1) of $v^{\varepsilon}$ we get $|v^{*}(t)-w^{\varepsilon}(t)| \leq \varepsilon$.
	
	We have to show that $\Pi v^{\varepsilon}(t)$ and $\Pi w^{\varepsilon}(t)$ lies from the same side of $\Pi v^{*}(t)$ for all $t$. Indeed, if this is not true then for some $\delta_{0} > 0$ we must have $|\Pi v^{\varepsilon}(t)- \Pi w^{\varepsilon}(t)| \geq |\Pi v^{*}(t) - \Pi w^{\varepsilon}(t)| \geq \delta_{0} > 0$ for all $t \in \mathbb{R}$ that contradicts to the convergence $v^{\varepsilon}(t)-w^{\varepsilon}(t) \to 0$ as $t \to -\infty$. 
	
	Clearly, we have $w^{\varepsilon}(t) - v^{*}(t) \to 0$ as $\varepsilon \to 0+$. There exists a sequence $\varepsilon_{k} \to 0+$ as $k \to +\infty$ such that all $\Pi w^{\varepsilon_{k}}$ (and corresponding to them $\Pi y^{\varepsilon_{k}}$) lie on the same side from $\Pi v^{*}$. But this will contradict the property 3) in Lemma \ref{LEM: LyapunovInstabilityLemma} of $v^{\varepsilon_{1}}$ since $\Pi w^{\varepsilon_{k}}(\theta_{\varepsilon_{1}})$ for sufficiently large $k$ will lie between $\Pi v^{\varepsilon_{1}}(\theta_{\varepsilon_{1}})$ and $\Pi v^{*}(\theta_{\varepsilon_{1}})$. The lemma is proved.
\end{proof}

For investigation of stability properties it is convenient to introduce the following definitions. Let $u$ and $v$ be two distinct amenable trajectories at $q \in \mathcal{Q}$. Then $|\Pi u(t) - \Pi v(t)| > 0$ for every $t \in \mathbb{R}$. This implies that the real-valued function $\Pi u(t) - \Pi v(t)$ has a constant sign. In particular, for $v(t)=u(t+\sigma)$ this means that the sequence $\Pi u(k \sigma)$, where $k=1,2,\ldots$, is either decreasing or increasing provided that $u$ is not $\sigma$-periodic. We will call such $u$ \textit{decreasing} or \textit{increasing} respectively.

For $v_{0}$ we call the corresponding $\sigma$-periodic trajectory $v^{*}$ \textit{upper amenable stable} if either there is a sequence $v^{*}_{k}$, where $k=1,2,\ldots$, of $\sigma$-periodic trajectories such that the sequence $\Pi v^{*}_{k}(0)$ is strictly decreasing and $\Pi v^{*}_{k}(0) \to \Pi v^{*}(0)$ as $k \to \infty$ or there is $\delta>0$ such that for every $u_{0} \in \mathfrak{A}_{q}$ with $\Pi v^{*} < \Pi u_{0} < \Pi v^{*} + \delta$ we have that the corresponding amenable trajectory $u$ with $u(0)=u_{0}$ is decreasing. The notion of \textit{lower amenable stability} can be introduced analogously. It is clear that any upper and lower amenable stable $\sigma$-periodic trajectory is amenable stable.

\begin{proof}[Proof of Theorem \ref{TH: StablePeriodicSmith}]
	By definition of the sink $\mathcal{S}_{0}$ there exists an open set $\mathcal{G} \supset \mathcal{S}_{0}$ such that for every $u_{0} \in \mathcal{G}$ we have $\psi^{t}(q,u_{0}) \in \mathcal{S}_{0}$ for all $t \geq T(u_{0})$. Since $S_{0}$ is bounded, every semi-trajectory $\psi^{t}(q,u_{0})$ with $u_{0} \in \mathcal{G}$ converges to a $\sigma$-periodic trajectory, which is obviously lying in $\mathcal{S}_{0}$.
	
	Let $\mathcal{K}$ be the collection of all $v_{0} \in \mathfrak{A}_{q} \cap \mathcal{S}_{0}$ corresponding to $\sigma$-periodic solutions. By Lemma \ref{LEM: PiIsHomeo} the set $\mathcal{K}$ is compact. Let $v^{*}_{1}$ be a $\sigma$-periodic trajectory such that $\Pi v^{*}_{1}(0) = \sup \Pi \mathcal{K}$. Clearly, $v^{*}_{1}$ is upper amenable stable. From this it follows that the set $\mathcal{K}_{u} \subset \mathcal{K}$ consisting of all upper amenable stable $\sigma$-periodic trajectories is not empty. Moreover, the $\sigma$-periodic trajectory $v^{*}_{2}$ such that $\Pi v^{*}_{2}(0) = \inf \Pi \mathcal{K}_{u}$ is upper amenable stable. It is easy to see that $v^{*}_{2}$ must be lower amenable stable. Therefore, $v^{*}_{2}$ is amenable stable and by Lemma \ref{LEM: AmenStIsLyapunSt} it is Lyapunov stable.
\end{proof}

\begin{proposition}
	\label{PROS: IsolatedStable}
	Let \textbf{(H1)}, \textbf{(H2)} with $j=1$, \textbf{(H3)} and \textbf{(COM3)} hold. Then any isolated Lyapunov stable $\sigma$-periodic trajectory is asymptotically Lyapunov stable.
\end{proposition}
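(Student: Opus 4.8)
The plan is to reduce everything to the convergence Theorem \ref{TH: ConvergenceTh}, letting the assumed Lyapunov stability handle the tube containment and the isolation hypothesis pin down the limit. First I would fix the meaning of isolation. Since every $\sigma$-periodic trajectory is bounded and $\nu>0$, it is amenable, so it is uniquely determined by its value at $0$ via Corollary \ref{COR: UniqueAmenable}, and by Lemma \ref{LEM: PiIsHomeo} (with $j=1$) the map $w^{*}\mapsto\Pi w^{*}(0)\in\mathbb{R}$ is injective on the set of $\sigma$-periodic trajectories at $q$. Thus ``$v^{*}$ isolated'' means there is $\varepsilon_{1}>0$ such that no $\sigma$-periodic trajectory $w^{*}\neq v^{*}$ at $q$ satisfies $|w^{*}(0)-v^{*}(0)|<\varepsilon_{1}$.

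Next, I would choose $\varepsilon_{0}$ with $0<\varepsilon_{0}<\varepsilon_{1}$. By Lyapunov stability of $v^{*}$ there is $\delta_{0}>0$ such that every semi-trajectory $u(t)=\psi^{t}(q,u_{0})$ with $|u_{0}-v^{*}(0)|<\delta_{0}$ stays in the tube $|u(t)-v^{*}(t)|<\varepsilon_{0}$ for all $t\geq 0$. Since $v^{*}$ is bounded, such a $u$ is then bounded, hence compact by \textbf{(COM3)}. Therefore Theorem \ref{TH: ConvergenceTh} applies and yields a $\sigma$-periodic trajectory $w^{*}$ at $q$ with $u(t)-w^{*}(t)\to 0$ as $t\to+\infty$.

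It then remains to identify $w^{*}$ with $v^{*}$. Using $\sigma$-periodicity of both $v^{*}$ and $w^{*}$, together with $u(k\sigma)-w^{*}(0)\to 0$ and $|u(k\sigma)-v^{*}(0)|=|u(k\sigma)-v^{*}(k\sigma)|<\varepsilon_{0}$, I would pass to the limit $k\to+\infty$ to obtain $|w^{*}(0)-v^{*}(0)|\leq\varepsilon_{0}<\varepsilon_{1}$. By the isolation property this forces $w^{*}=v^{*}$, whence $u(t)-v^{*}(t)\to 0$ as $t\to+\infty$ for every $u_{0}$ with $|u_{0}-v^{*}(0)|<\delta_{0}$. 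Combined with the assumed Lyapunov stability, this is precisely asymptotic Lyapunov stability of $v^{*}$.

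The argument is a soft one: the substantive input is the convergence Theorem \ref{TH: ConvergenceTh}, which already packages the squeezing property \textbf{(H3)}, the one-dimensionality $j=1$ (giving monotonicity of $\Pi u(k\sigma)$), and compactness via \textbf{(COM3)}. I do not expect a hard estimate; the only step requiring care is the bookkeeping around isolation, namely guaranteeing that the limit orbit $w^{*}$ lies inside the isolating neighborhood of $v^{*}$. This is exactly where the $\varepsilon_{0}$-tube containment furnished by Lyapunov stability is essential, and choosing $\varepsilon_{0}<\varepsilon_{1}$ from the outset is what makes the final identification automatic.
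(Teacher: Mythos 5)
Your proof is correct and follows essentially the same route as the paper's: both apply Theorem \ref{TH: ConvergenceTh} to a nearby semi-trajectory (bounded thanks to Lyapunov stability) and use isolation to force the resulting $\sigma$-periodic limit to coincide with $v^{*}$. The only difference is presentational --- the paper argues by contradiction, producing distinct $\sigma$-periodic trajectories $v^{*}_{\delta}$ converging to $v^{*}$ and thus violating isolation, whereas you run the same argument directly via the $\varepsilon_{0}$-tube containment.
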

\begin{proof}
	Suppose we have an isolated $\sigma$-periodic Lyapunov stable trajectory $v^{*}$ at $q \in \mathcal{Q}$ that is not asymptotically stable. For every $\delta>0$ there is a trajectory $u_{\delta}$ at $q$ such that $|u_{\delta}(0)-v^{*}(0)|<\delta$ and $u_{\delta}(t) \not\to v^{*}(t)$ as $t \to +\infty$. Since $v^{*}$ is Lyapunov stable, the trajectories $u_{\delta}$ are bounded in the future for all sufficiently small $\delta>0$. By Theorem \ref{TH: ConvergenceTh} there is a $\sigma$-periodic trajectory such that $u_{\delta}(t) \to v^{*}_{\delta}(t)$ as $t \to +\infty$. By the choice of $u_{\delta}$ it is clear that $v^{*}_{\delta}$ and $v^{*}$ are distinct and $v^{*}_{\delta}(t) \to v^{*}(t)$ as $\delta \to 0+$. Therefore, $v^{*}$ is not isolated that leads to a contradiction.
\end{proof}

Now we prove an extension of the Massera second theorem \cite{Massera1950} as follows.

\begin{proof}[Proof of Theorem \ref{TH: MasseraTheorem}]
	Indeed, the existence of a compact semi-trajectory implies the existence of a compact complete trajectory\footnote{Consider the sequence $u_{k}(t) = u(t+\sigma k)$, where $k=1,2,\ldots$, of trajectories defined for $t \geq -\sigma k$. One can subtract a subsequence $u_{k_{m}}$ such that $u_{k_{m}}(-l)$ is convergent for any $l=1,2,\ldots$. It is clear that $u_{k_{m}}(\cdot)$ converges to a bounded complete trajectory of the cocycle.}, which is amenable since $\nu > 0$. Thus \textbf{(H4)} is satisfied and Theorem \ref{TH: SmithTheorem} is applicable. Since $\mathfrak{A}(q)=\mathfrak{A}(\vartheta^{\sigma}(q))$, the Poincar\'{e} map $T(v) := \psi^{\sigma}(q,v)$, $v \in \mathfrak{A}(q)$, is a self-map of $\mathfrak{A}(q)$. By Theorem \ref{TH: SmithTheorem}, the set $\mathfrak{A}(q)$ is homeomorphic to $\mathbb{R}^{2}$ and the existence of bounded trajectory implies that $T$ has a point with a convergent subsequence of its iterates. Therefore, by a topological argument as in \cite{Massera1950} (see \cite{Pliss1966} for a complete proof), $T$ has a fixed point corresponding to a $\sigma$-periodic trajectory.
\end{proof}
\section{Delayed feedback systems}
\label{SEC: DelayedSystems}
We start with the following scalar delay equation:
\begin{equation}
\label{EQ: DelayEq}
\begin{split}
&\dot{x}=-\lambda x(t) + b f(t,\upsilon(t)) + g(t),\\
&\upsilon(t) = \int_{-\tau}^{0} \rho(s) x(t+s)ds
\end{split}
\end{equation}
where $\lambda>0$ and $\tau >0$ are some constants; $b = \pm 1$; $g$ and $\rho$ are continuous functions and $f$ is a scalar continuously differentiable function\footnote{As an example one may take $f(t,\upsilon)=b_{1}(t) \frac{1}{1+e^{-\upsilon}} + b_{2}(t)$, where $b_{1}(t)$ is positive.} satisfying $0 \leq \frac{d}{d\upsilon}f(t,\upsilon) \leq \mu_{0}$ for all $t \in \mathbb{R}$, $\upsilon \in \mathbb{R}$. We also suppose that $g$ and $f$ are $\sigma$-periodic in $t$. We will study the problem \eqref{EQ: DelayEq} in the $L_{2}$ setting. Namely, let $\mathbb{H}:=\mathbb{R} \times L_{2}([-\tau,0];\mathbb{R})$. Put $\mathcal{D}(A):=\{ (x_{0},\phi) \in \mathbb{H} \ | \ \phi \in W^{1,2}(-\tau,0;\mathbb{R}) \text{ and } \phi(0)=x_{0} \}$ and consider the operator $A \colon \mathcal{D}(A) \subset \mathbb{H} \to \mathbb{H}$ defined as
\begin{equation}
(x,\phi) \mapsto \left(-\lambda x, \frac{\partial}{\partial s} \phi\right).
\end{equation}
Put $\Xi:=\mathbb{R}$ and consider the operator $B \colon \Xi \to \mathbb{H}$ defined as
\begin{equation}
\xi \mapsto (b \xi, 0).
\end{equation}
Define the operator $C \colon \mathbb{H} \to \mathbb{R}$ as
\begin{equation}
(x,\phi) \mapsto \int_{-\tau}^{0} \rho(s)\phi(s)ds.
\end{equation}
\begin{remark}
	\label{REM: DelayLimitations}
	It is more natural to consider $\rho = \delta_{-\tau}$, where $\delta_{-\tau}$ is the delta function at $-\tau$, that makes $C$ unbounded in $\mathbb{H}$. However, in this case we cannot apply the frequency theorem (Theorem \ref{TH: YakubovichLikhtarnikovC0}) since it is required that the quadratic form in \eqref{EQ: QuadraticFormRealDelay} must be continuous on $\mathbb{H} \times \Xi$.
\end{remark}
For $q \in \mathbb{R}$ we consider the abstract Cauchy problem in $\mathbb{H}$
\begin{equation}
\label{EQ: DelayAbstractPert}
\begin{split}
&\dot{u} = Au + Bf(t,Cu) + G(t),\\
&u(q)=u_{0} \in \mathbb{H},
\end{split}
\end{equation}
where $G(t)=(g(t),0) \in \mathbb{R} \times \mathbb{H}$. Problem \eqref{EQ: DelayAbstractPert} gives rise to the $\sigma$-periodic cocycle $(\psi, \vartheta)$ in $\mathbb{H}$, where $\vartheta^{t}=\vartheta^{t}_{\sigma}$ is acting on $\mathcal{Q} = \mathcal{S}^{1}_{\sigma} = \mathbb{R}/\sigma\mathbb{Z}$ and $\psi^{t}(q, u_{0}):=u(t+q,q,u_{0})$, where $u(s,q,u_{0})$, $s \geq q$, is a solution (in a generalized sense) to \eqref{EQ: DelayAbstractPert} with $u(q,q,u_{0})=u_{0}$ (see \cite{Webb1981} or \cite{Faheem1987}).

Below we give an analysis of all the conditions in Theorem \ref{TH: YakubovichLikhtarnikovC0}) that makes it possible to apply previous results (in particular, Theorem \ref{TH: ConvergenceTh}) to the cocycle $(\psi,\vartheta)$. In our special case the properties of the linear part can be calculated directly. For the general theory of linear delay equations we refer to \cite{BatkaiPiazzera2005}.

\subsection{Linear part}
\label{SUBSEC: LinearPart}
Let $\nu \geq 0$ and $\nu \not= \lambda$. Clearly, the operator $A+\nu I$ is the generator of a $C_{0}$-semigroup $e^{\nu t}G(t)$, $t \geq 0$, in $\mathbb{H}$, where $G(t)$ is given by
\begin{equation}
(x_{0},\phi_{0}) \overset{G(t)}{\mapsto} (x_{0} e^{-\lambda t}, \phi(t,\cdot)),
\end{equation}
where for $t \geq 0$ and $s \in [-\tau,0]$
\begin{equation*}
\phi(t,s):= \begin{cases}
x_{0}e^{-\lambda (t+s)}, \text{ if } t+s \geq 0,\\
\phi_{0}(t+s), \text{ if } -\tau \leq t+s < 0.
\end{cases}
\end{equation*}
Since the pair $(A+\nu I, B)$ is exponentially stabilizable (for example, by the feedback $\xi(t) = -\operatorname{sgn}b \cdot \nu x(t)$), it is $L_{2}$-controllable (see Appendix \ref{SEC: FrequencyTheorem}). For $\nu > \lambda$ we have the splitting $\mathbb{H} = \mathbb{H}^{s} \oplus \mathbb{H}^{u}$, where $\mathbb{H}^{s}:=\{ (x_{0},\phi_{0}) \in \mathbb{H} \ | \ x_{0} = 0 \}$ and $\mathbb{H}^{u} = \{ (x_{0},\phi_{0}) \in \mathbb{H} \ | \ \phi_{0}(s) = e^{(-\lambda + \nu)s} x_{0} \}$. Clearly, $\mathbb{H}=\mathbb{H}^{s}\oplus\mathbb{H}^{u}$ and for $u_{0}=(x_{0},\phi_{0}) \in \mathbb{H}^{s}$ we have $G(t)u_{0} \to 0$ as $t \to +\infty$ and for $u_{0} \in \mathbb{H}^{u}$ we have $G(t)u_{0} \to 0$ as $t \to -\infty$.

\subsection{Frequency-domain condition}
Consider the continuous quadratic form on $\mathbb{H} \times \Xi$
\begin{equation}
\label{EQ: QuadraticFormRealDelay}
F(u,\xi) = F(x,\phi,\xi):=\xi\left(\mu_{0} \int_{-\tau}^{0}\rho(s)\phi(s)ds - \xi\right)
\end{equation}
and its Hermitian extension (i. e. for $u \in \mathbb{H}^{\mathbb{C}}$, $\xi \in \Xi^{\mathbb{C}}$)
\begin{equation}
F^{\mathbb{C}}(u,\xi)=F^{\mathbb{C}}(x,\phi,\xi)=\operatorname{Re}\left[\xi^{*}\left(\mu_{0} \int_{-\tau}^{0}\rho(s)\phi(s)ds - \xi\right)\right].
\end{equation}
The choice of the form $F$ is related to properties of the nonlinearity $f$. Namely, the following important property is satisfied:
\begin{enumerate}
	\item[\textbf{(Q1)}] $F(u_{1}-u_{2}, \xi_{1}-\xi_{2}) \geq 0$ for all $u_{1},u_{2} \in \mathbb{H}$ and $\xi_{1} = f(t,Cu_{1})$, $\xi_{2}=f(t,Cu_{2})$.
\end{enumerate}

\begin{remark}
	\label{REM: ConstructingQuadraticForms}
	If we know some bound on the derivative of $f$ from below (in addition to the upper bound), i.~e. that $\mu_{1} \leq \frac{d}{d\nu}f(t,\sigma) \leq \mu_{2}$, then an appropriate choice of the quadratic form will be
		\begin{equation}
		F(u,\xi) = F(x,\phi,\xi):=\left(\xi - \mu_{1} \int_{-\tau}^{0}\rho(s)\phi(s)ds\right)\left(\mu_{2} \int_{-\tau}^{0}\rho(s)\phi(s)ds - \xi\right).
		\end{equation}
	However, one should require that for some $\mu_{0} \in [\mu_{1},\mu_{2}]$ the operator $(A + \nu I + \mu_{0}BC)$, where $C (x,\phi) := \int_{-\tau}^{0}\rho(s)\phi(s)ds$, admits a dichotomy, which allows to determine desired properties of $P$ from \eqref{EQ: FreqFormIneq} with $\xi = \mu_{0}C$. The nonlinearities with unbounded derivatives may also be considered \cite{LeoBurShep1996, AnikushinRR2019}. In this case one has to use the so called frequency theorem for the degenerate case \cite{Likhtarnikov1976,Likhtarnikov1977,LouisWexler1991}. For applications of this theorem in infinite dimensions there is a problem on checking the semiboundedness of a certain quadratic functional. However, in some cases \cite{ArovYakub1982} the mentonied semiboundedness is equivalent to the non-strict frequency-domain condition (as in the finite-dimensional case).
\end{remark}

By $A^{\mathbb{C}}$, $B^{\mathbb{C}}$ and $C^{\mathbb{C}}$ we denote the complexifications of the operators $A,B$ and $C$ respectively. Now consider the \textit{transfer function} of the triple $(A,B,C)$, i. e. $W(p):=C^{\mathbb{C}}(A^{\mathbb{C}}-pI)^{-1}B^{\mathbb{C}}$. Clearly, the spectrum of $A$ consist of a single eigenvalue $-\lambda$ and, consequently, the transfer function is defined for all $p \in \mathbb{C}$, $p \not = -\lambda$. For every such $p$ the function $W(p)$ is a linear operator $\mathbb{C} \to \mathbb{C}$ and therefore it can be identified with a complex number. It is easy to check that
\begin{equation}
\label{EQ: DelayTransferFunction}
W(p)=-\frac{b}{\lambda + p}\int_{-\tau}^{0}\rho(s) e^{p s} ds.
\end{equation}
Now let $\nu \geq 0$, $\nu \not= \lambda$. We are going to state the frequency domain condition (the condition $\alpha_{3}<0$ from Theorem \ref{TH: YakubovichLikhtarnikovC0}) for the control system given by the pair $(A+\nu I, B)$ and the form $F(u,\xi)$. Since
\begin{equation}
\label{EQ: DelayFreqDomCond}
\begin{split}
F^{\mathbb{C}}(-(A+\nu I - i\omega I)^{-1}B\xi,\xi) &= \operatorname{Re} \left[ \xi^{*} ( -\mu_{0}W(i\omega - \nu)\xi - \xi ) \right] \\
&= -\mu_{0} |\xi|^{2} \left(\operatorname{Re} W(i\omega - \nu) + \frac{1}{\mu_{0}} \right).
\end{split}
\end{equation}
the frequency-domain condition is
\begin{enumerate}
	\item[\textbf{(DF)}] $\operatorname{Re} W(i\omega - \nu) + 1/\mu_{0} > 0$ for all $\omega \in \mathbb{R}$.
\end{enumerate}

\begin{theorem}
	\label{TH: DelayedFeedback}
	Suppose \textbf{(DF)} with some $\nu > \lambda$ is satisfied. Then there exists a self-adjoint operator $P \in \mathcal{L}(\mathbb{H})$ such that \textbf{(H1)}, \textbf{(H2)} with $j=1$ and \textbf{(H3)} are satisfied for the cocycle $(\psi,\vartheta)$ generated by \eqref{EQ: DelayAbstractPert}. 
\end{theorem}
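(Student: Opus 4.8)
The plan is to obtain $P$ as the solution of the operator inequality furnished by the Yakubovich--Likhtarnikov frequency theorem (Theorem \ref{TH: YakubovichLikhtarnikovC0}) applied to the shifted pair $(A + \nu I, B)$ together with the form $F$, and then to read off \textbf{(H1)}--\textbf{(H3)} from this inequality combined with the dichotomy of the linear part established in Subsection \ref{SUBSEC: LinearPart}.

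First I would check that all the hypotheses of Theorem \ref{TH: YakubovichLikhtarnikovC0} hold for $(A + \nu I, B, F)$. The $L_{2}$-controllability of $(A + \nu I, B)$ follows from its exponential stabilizability, already verified in Subsection \ref{SUBSEC: LinearPart}; the form $F$ from \eqref{EQ: QuadraticFormRealDelay} is continuous on $\mathbb{H} \times \Xi$ (this is precisely why one works with $\rho \in L_{2}$ rather than $\rho = \delta_{-\tau}$, cf. Remark \ref{REM: DelayLimitations}); and since the only eigenvalue of $A$ is $-\lambda$ and $\nu > \lambda$, the points $i\omega - \nu$, $\omega \in \mathbb{R}$, lie in the resolvent set, so $W(i\omega - \nu)$ is well defined and the frequency condition $\alpha_{3} < 0$ is equivalent to \textbf{(DF)} via the computation \eqref{EQ: DelayFreqDomCond}. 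The theorem then yields a self-adjoint $P \in \mathcal{L}(\mathbb{H})$ and $\delta > 0$ with
\[
2(Pu, (A + \nu I)u + B\xi) + F(u, \xi) \leq -\delta(|u|^{2} + |\xi|^{2})
\]
for all $u \in \mathcal{D}(A)$ and $\xi \in \Xi$.

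To obtain \textbf{(H3)}, take two trajectories of \eqref{EQ: DelayAbstractPert} at the same $q$, set $w = u - v$ and $\xi(t) = f(t, Cu) - f(t, Cv)$; the inhomogeneity $G(t)$ cancels, so $\dot w = Aw + B\xi$. Differentiating $V(w) = (Pw, w)$ and inserting the operator inequality gives
\[
\frac{d}{dt}\left[ e^{2\nu t} V(w(t)) \right] = e^{2\nu t} \cdot 2\left( Pw, (A+\nu I)w + B\xi \right) \leq -e^{2\nu t}\left( F(w, \xi) + \delta |w|^{2} \right).
\]
Property \textbf{(Q1)} gives $F(w, \xi) \geq 0$, so the right-hand side is $\leq -\delta e^{2\nu t}|w|^{2}$, and integrating over $[l, r]$ produces exactly \eqref{EQ: H3TrajectoryForm}, hence \textbf{(H3)}. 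Since the cocycle only provides generalized (mild) solutions, this differentiation must be justified by first taking $u_{0}, v_{0} \in \mathcal{D}(A)$, where classical solutions exist, and passing to the limit through continuous dependence, or by invoking the integrated form of the quadratic functional that the frequency theorem supplies directly.

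For \textbf{(H1)} and \textbf{(H2)} I would specialize the operator inequality to $\xi = 0$, where $F(u, 0) = 0$, obtaining the Lyapunov inequality $2(Pw, (A+\nu I)w) \leq -\delta |w|^{2}$; thus $V(w(t))$ is strictly decreasing along every nonzero solution of $\dot w = (A + \nu I)w$. Using the dichotomy $\mathbb{H} = \mathbb{H}^{s} \oplus \mathbb{H}^{u}$ of Subsection \ref{SUBSEC: LinearPart}, integrating from $0$ to $+\infty$ for $w_{0} \in \mathbb{H}^{s}$ (where the solution decays) yields $V(w_{0}) > 0$, while integrating from $-\infty$ to $0$ for $w_{0} \in \mathbb{H}^{u}$ (where the backward solution decays) yields $V(w_{0}) < 0$. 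Hence $V$ is positive on $\mathbb{H}^{s}$ and negative on the one-dimensional subspace $\mathbb{H}^{u}$; together with the non-degeneracy of $P$ coming from the strict inequality, the spectral decomposition of $P$ splits $\mathbb{H}$ into $\mathbb{H}^{+} \oplus \mathbb{H}^{-}$ with $\dim \mathbb{H}^{-} = \dim \mathbb{H}^{u} = 1$, giving \textbf{(H1)} and \textbf{(H2)} with $j = 1$. These sign computations are the content of the Lyapunov-inequality propositions in Appendix \ref{SEC: LyapunivIneq}, which I would cite for the rigorous count of negative directions. The main obstacle is exactly this last step: the passage from the Lyapunov inequality plus dichotomy to the precise conclusion that $\dim \mathbb{H}^{-} = 1$ and that $P$ is boundedly invertible with the stated definiteness on each invariant subspace, rather than merely carrying the correct signs there.
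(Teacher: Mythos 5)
Your proposal is correct and follows essentially the same route as the paper's own proof: apply the Yakubovich--Likhtarnikov frequency theorem to $(A+\nu I,B,F)$, obtain \textbf{(H3)} by substituting $u=u_{1}-u_{2}$, $\xi=f(t,Cu_{1})-f(t,Cu_{2})$ and using \textbf{(Q1)} together with strong solutions for initial data in $\mathcal{D}(A)$ (the paper cites Webb's results here) and extension by continuity, and obtain \textbf{(H1)}--\textbf{(H2)} from the Lyapunov inequality at $\xi=0$ combined with the dichotomy of Subsection \ref{SUBSEC: LinearPart}. The final ``obstacle'' you flag is exactly what Propositions \ref{PROP: TrivialKernel}, \ref{PROP: EstimatesSubspaces} and \ref{PROP: EstimateEigenvaluesWithStable} of Appendix \ref{SEC: LyapunivIneq} settle, and the paper closes the argument precisely by citing them, as you propose.
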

\begin{proof}
	We have that $A+\nu I$ is the generator of a $C_{0}$-semigroup and the pair $(A+\nu I,B)$ is $L_{2}$-controllable (see Subsection \ref{SUBSEC: LinearPart}). The frequency domain condition \textbf{(DF)} allows us to apply Theorem \ref{TH: YakubovichLikhtarnikovC0} to get a self-adjoint operator $P \in \mathcal{L}(\mathbb{H})$ such that
	\begin{equation}
	\label{EQ: FreqFormIneq}
	2 ((A+\nu I)u + B\xi, Pu) + F(u,\xi)  \leq -\delta (|u|^2 + |\xi|^{2}), \text{ for } u \in \mathcal{D}(A), \xi \in \mathbb{R}.
	\end{equation}
	Putting $\xi = 0$ in \eqref{EQ: FreqFormIneq} we get
	\begin{equation}
	2((A+\nu I)u,Pu) \leq -\delta |u|^{2}.
	\end{equation}
	Since zero is a regular value of $A+\nu I$, we have $\mathcal{R}(A+\nu I) = \mathbb{H}$ and by Proposition \ref{PROP: TrivialKernel} we get $\operatorname{Ker}P = \{ 0 \}$. Applying Propositions \ref{PROP: EstimatesSubspaces} and \ref{PROP: EstimateEigenvaluesWithStable} to $A+\nu I$, $P$ and the spaces $\mathbb{H}^{s}$ and $\mathbb{H}^{u}$ defined in Subsection \ref{SUBSEC: LinearPart} we get that $\dim\mathbb{H}^{-} = 1$.
	
	Putting $u=u_{1}-u_{2}$ and $\xi = f(Cu_{1})-f(Cu_{2})$ in \eqref{EQ: FreqFormIneq} we get
	\begin{equation}
	\begin{split}
	2(A(u_{1}-u_{2}) + B(f(Cu_{1})-f(Cu_{2})),P(u_{1}-u_{2}))+\\+F(u_{1}-u_{2},f(Cu_{1})-f(Cu_{2})) \leq -\delta |u_{1}-u_{2}|^{2}.
	\end{split}
	\end{equation}
	From this with the use of property \textbf{(Q1)} of the form $F$ we get
	\begin{equation}
	\label{EQ: FormProperty}
	2(A(u_{1}-u_{2}) + B(f(Cu_{1})-f(Cu_{2})),P(u_{1}-u_{2})) \\ \leq -\delta |u_{1}-u_{2}|^{2}.
	\end{equation}
	From Theorem 2.5 and Remark 2.2 in \cite{Webb1981} it follows that for initial values from $\mathcal{D}(A)$ we have strong solutions\footnote{According to \cite{Webb1981}, by a strong solution to \eqref{EQ: DelayAbstractPert} on $[l,r]$ we mean an $\mathbb{H}$-valued function $u(\cdot)$, which is continuous on $[l,r]$, absolutely continuous on compact subintervals of $(l,r)$ and satisfy \eqref{EQ: DelayAbstractPert} almost everywhere on $(l,r)$.} to \eqref{EQ: DelayAbstractPert}. Let $l \leq r$ be given and let $u_{1}(t)$ and $u_{2}(t)$ be two strong solutions to \eqref{EQ: DelayAbstractPert} for $t \in [l,r]$. From \eqref{EQ: FormProperty} we have for almost all $s \in [l,r]$
	\begin{equation}
	\label{EQ: BeforeIntegrating}
	\frac{d}{ds}[ e^{2\nu s}V(u_{1}(s)-u_{2}(s)) ] \leq -\delta e^{2\nu s}|u_{1}(s)-u_{2}(s)|^{2}.
	\end{equation}
	Integrating \eqref{EQ: BeforeIntegrating} on $[l,r]$ we get
	\begin{equation}
	\label{EQ: FinalIneqH3}
	e^{2 \nu r}V(u_{1}(r)-u_{2}(r)) - e^{2\nu l}V(u_{1}(l)-u_{2}(l)) \leq -\delta\int_{l}^{r}e^{2\nu s}|u_{1}(s)-u_{2}(s)|^{2}ds.
	\end{equation}
	Now we extend \eqref{EQ: FinalIneqH3} for solutions with initial data from $\mathbb{H}$ by continuity. Thus, we have \textbf{(H3)} satisfied.
\end{proof}

In fact, we have condition \textbf{(COM2)} with $t_{com}=2\tau$ satisfied (however, as it shown in \cite{Anikushin2019+OnCom}, the property \textbf{(COM1)} cannot hold in this case). This follows from Lemma 5.6 of \cite{Faheem1987}, linking the compactness of the evolution operator in $\mathbb{H}$ given by \eqref{EQ: DelayAbstractPert} with the compactness of the evolution operator in $C([-\tau,0];\mathbb{R})$ given by \eqref{EQ: DelayEq}. The latter property can be checked with the use of results of Section 3.6 in \cite{Hale} and, clearly, it holds in our case due to the Lipschitz property of the right-hand side in \eqref{EQ: DelayEq}. 

Summarizing the above, we obtain the following theorem.
\begin{theorem}
	\label{TH: ConvegenceDelayEqs}
	Let $(\psi,\vartheta)$ be the $\sigma$-periodic cocycle generated by \eqref{EQ: DelayAbstractPert} and suppose \textbf{(DF)} is satisfied. Then any bounded in the future trajectory converges to a $\sigma$-periodic trajectory.
\end{theorem}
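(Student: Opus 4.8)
The plan is to recognize this as a direct application of the abstract convergence result, Theorem \ref{TH: ConvergenceTh}, to the concrete cocycle $(\psi,\vartheta)$ generated by \eqref{EQ: DelayAbstractPert}. Accordingly, the whole task reduces to verifying, one by one, the hypotheses of that theorem for this cocycle: \textbf{(H1)}, \textbf{(H2)} with $j=1$, \textbf{(H3)}, and the compactness property \textbf{(COM3)}; together with the already-fixed fact that the cocycle is $\sigma$-periodic, which holds by construction since $\mathcal{Q}=\mathcal{S}^{1}_{\sigma}$ and $\vartheta^{t}=\vartheta^{t}_{\sigma}$. Note that \textbf{(H4)} is \emph{not} among the hypotheses of Theorem \ref{TH: ConvergenceTh}, so no separate amenability assumption needs to be produced here.

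The structural hypotheses come for free from the work already done. Reading \textbf{(DF)} as holding for some $\nu>\lambda$, Theorem \ref{TH: DelayedFeedback} produces a self-adjoint operator $P\in\mathcal{L}(\mathbb{H})$ delivering exactly \textbf{(H1)}, \textbf{(H2)} with $j=1$, and \textbf{(H3)}. Thus at this stage no further analysis of the frequency-domain condition, of the Lyapunov-type inequality \eqref{EQ: FreqFormIneq}, or of the spectral splitting of $A+\nu I$ is needed.

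The only genuine work, and the step I expect to be the main obstacle, is establishing the compactness hypothesis \textbf{(COM3)}. Rather than attacking it directly, I would first secure the stronger and more convenient property \textbf{(COM2)}, claiming it with $t_{com}=2\tau$. To justify this I would invoke Lemma 5.6 of \cite{Faheem1987}, which transfers compactness of the evolution operator in $\mathbb{H}=\mathbb{R}\times L_{2}([-\tau,0];\mathbb{R})$ to compactness of the corresponding solution operator of \eqref{EQ: DelayEq} on $C([-\tau,0];\mathbb{R})$; the latter is then guaranteed for times $t\geq 2\tau$ by the standard delay-equation smoothing theory of Section 3.6 in \cite{Hale}, whose hypotheses hold because the right-hand side of \eqref{EQ: DelayEq} is Lipschitz. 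The subtlety worth flagging is that, unlike in parabolic problems, the smoothing here is \emph{delayed}, so one cannot expect \textbf{(COM1)} to hold (indeed $P$ need not be compact, cf. \cite{Anikushin2019+OnCom}); it is precisely for this reason that I route the argument through \textbf{(COM2)} with $t_{com}=2\tau$ rather than through \textbf{(COM1)}.

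With \textbf{(COM2)} in hand, Corollary \ref{COR: CompatTrajectories} immediately upgrades it to \textbf{(COM3)}: every bounded-in-the-future semi-trajectory is compact. All the hypotheses of Theorem \ref{TH: ConvergenceTh} being verified, that theorem applies verbatim and yields, for any bounded semi-trajectory $u(t)=\psi^{t}(q,u_{0})$, a $\sigma$-periodic trajectory $u^{*}$ at $q$ with $u(t)-u^{*}(t)\to 0$ as $t\to+\infty$, which is exactly the asserted convergence.
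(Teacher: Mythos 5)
Your proposal is correct and follows essentially the same route as the paper: invoke Theorem \ref{TH: DelayedFeedback} (with \textbf{(DF)} read for some $\nu>\lambda$) to get \textbf{(H1)}, \textbf{(H2)} with $j=1$ and \textbf{(H3)}, establish \textbf{(COM2)} with $t_{com}=2\tau$ via Lemma 5.6 of \cite{Faheem1987} together with Section 3.6 of \cite{Hale}, pass to \textbf{(COM3)} by Corollary \ref{COR: CompatTrajectories}, and apply Theorem \ref{TH: ConvergenceTh}. Even your side remarks (that \textbf{(H4)} is not needed and that \textbf{(COM1)} fails for delay equations, cf. \cite{Anikushin2019+OnCom}) mirror the paper's own discussion.
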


\noindent Note that in terms of solutions the convergence given by Theorem \ref{TH: ConvegenceDelayEqs} holds in the uniform norm.

\begin{remark}
	\label{REM: LimitFreqCond}
	Let us formally consider the frequency-domain condition \textbf{(DF)} for the case $\rho=\delta_{-\tau}$, i.~e.
	\begin{equation}
	\label{EQ: LimitFreqCond}
	\operatorname{Re}W_{0}(-\nu + i \omega) + \frac{1}{\mu_{0}} > 0, \text{ for } \omega \in \mathbb{R},
	\end{equation}
	where $W_{0}(p)=-\frac{e^{- \tau p}}{\lambda + p}$. Even if \eqref{EQ: LimitFreqCond} holds we cannot apply Theorem \ref{TH: YakubovichLikhtarnikovC0} as we did in the proof of Theorem \ref{TH: DelayedFeedback} since the form $F$ in \eqref{EQ: QuadraticFormRealDelay} with $\rho = \delta_{-\tau}$ is not continuous on $\mathbb{H} \times \Xi$.
	Now we suppose that the function $\rho$ approximates the delta function $\delta_{-\tau}$ at $-\tau$. Namely, we take $\rho = \rho_{n} = n \cdot  \chi_{[-\tau,-\tau+1/n]}$, where $\chi_{\mathcal{A}}$ is the characteristic function of $\mathcal{A}$. Let $W_{n}(p)$ be the transfer function from \eqref{EQ: DelayTransferFunction} for $\rho = \rho_{n}$. It is clear that if \eqref{EQ: LimitFreqCond} is satisfied then for all sufficiently large $n$ we have \textbf{(DF)} with $W = W_{n}$ satisfied too.
	\begin{problem}
		How can we deal with the case $\rho=\delta_{-\tau}$ using this theory or its modifications (maybe with the consideration of unbounded operators)?
	\end{problem}
	
	Note that \eqref{EQ: LimitFreqCond} will be satisfied if $|W(i\omega - \nu)| < 1/\mu_{0}$, i.~e. $\frac{e^{\tau \nu}}{|\nu - \lambda|} < 1/\mu_{0}$. Putting $\nu=0$ or $\nu = \lambda + \tau^{-1}$, we arrive at the following conditions: 
	\begin{enumerate}
		\item[\textbf{(DF1)}] $\lambda > \mu_{0}$,
		\item[\textbf{(DF2)}] $-\tau e^{\tau \lambda + 1} + 1/\mu_{0} > 0$.
	\end{enumerate}
	Suppose the problem \eqref{EQ: DelayAbstractPert} is stationary, i.~e. $g \equiv 0$, $f$ is independent of $t$ and $f(0)=0$. If \textbf{(DF1)} is satisfied there is a self-adjoint positive definite operator $P \in \mathcal{L}(\mathbb{H})$ such that \textbf{(H3)} holds for a small $\nu > 0$. This condition implies that the stationary point $u \equiv 0$ is unique and may indicate its global asymptotic stability. If only \textbf{(DF2)} is satisfied there may be several stationary points and the conditions of global stability cannot be fulfilled. Thus, within the conditions of Theorem \ref{TH: ConvegenceDelayEqs} the appearance of several periodic trajectories is possible. 
\end{remark}

To study the existence of Lyapunov stable $\sigma$-periodic trajectories we have to construct a sink $\mathcal{S}_{0}$. For this one can use Theorem 5 from \cite{Smith1990} as follows.

\begin{theorem}
	Let the hypotheses of Theorem \ref{TH: ConvegenceDelayEqs} hold. Assume, in addition, that $f(t,v)$ is bounded uniformly in $t \in \mathbb{R}$ and $v \in \mathbb{R}$. Then the cocycle $(\psi,\vartheta)$ is dissipative\footnote{That is every trajectory enters a certain bounded set.}. Moreover, there exists at least one $\sigma$-periodic trajectory, which is Lyapunov stable.
\end{theorem}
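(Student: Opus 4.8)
The plan is to manufacture a bounded closed positively invariant absorbing set (a \emph{sink} in the sense defined before Theorem \ref{TH: StablePeriodicSmith}) and then invoke that theorem directly. First I would record that all the structural hypotheses of Theorem \ref{TH: StablePeriodicSmith} are already in force under the standing assumptions: by Theorem \ref{TH: DelayedFeedback} the condition \textbf{(DF)} with $\nu>\lambda$ supplies a self-adjoint $P$ realizing \textbf{(H1)}, \textbf{(H2)} with $j=1$ and \textbf{(H3)}, while \textbf{(COM2)} holds with $t_{com}=2\tau$ and the cocycle is $\sigma$-periodic. Thus the only object left to produce is the sink, and once it is in hand Theorem \ref{TH: StablePeriodicSmith} yields the Lyapunov stable $\sigma$-periodic trajectory for free.

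To build the sink I would exploit the strongly dissipative scalar part $-\lambda x$. Since $f$ is bounded and $g$ is continuous and $\sigma$-periodic (hence bounded), the forcing $t \mapsto b f(t,\upsilon(t)) + g(t)$ is bounded in modulus by a constant $M$ along every solution. Writing the first component of \eqref{EQ: DelayAbstractPert} in variation-of-constants form
\begin{equation*}
x(t) = x(t_{0})e^{-\lambda(t-t_{0})} + \int_{t_{0}}^{t} e^{-\lambda(t-s)}\left( b f(s,\upsilon(s)) + g(s) \right) ds,
\end{equation*}
one obtains the convex-combination bound $|x(t)| \leq e^{-\lambda(t-t_{0})}|x(t_{0})| + (1-e^{-\lambda(t-t_{0})})\tfrac{M}{\lambda}$. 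Hence $\limsup_{t\to+\infty}|x(t)| \leq M/\lambda$, and for any $R_{1} > M/\lambda$ the scalar band $\{|x|\leq R_{1}\}$ is both absorbing and positively invariant for the $x$-dynamics.

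Next I would transfer this to the phase space $\mathbb{H}=\mathbb{R}\times L_{2}([-\tau,0];\mathbb{R})$. Once $|x(s)|\leq R_{1}$ for all $s\geq T$ --- which occurs after finite time for every trajectory --- the history component $\phi(t,\cdot)$, given by $\phi(t,s)=x(t+s)$, satisfies $\int_{-\tau}^{0}|x(t+s)|^{2}ds \leq R_{1}^{2}\tau$ for all $t\geq T+\tau$. Therefore the bounded closed set $\mathcal{S}_{0}:=\{(x_{0},\phi)\in\mathbb{H}\ |\ |x_{0}|\leq R_{1},\ \|\phi\|_{L_{2}}\leq R_{1}\sqrt{\tau}\}$ absorbs every trajectory and retains it for all later times. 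Since this holds for arbitrary initial data (and the estimate is uniform in the base point, $M$ being a uniform-in-$t$ bound), $\mathcal{S}_{0}$ is a sink with $\mathcal{G}=\mathbb{H}$, which simultaneously proves the claimed dissipativity. Applying Theorem \ref{TH: StablePeriodicSmith} to $\mathcal{S}_{0}$ then produces a Lyapunov stable $\sigma$-periodic trajectory lying in $\mathcal{S}_{0}$. As the text notes, the dissipativity could alternatively be extracted from Theorem 5 of \cite{Smith1990}.

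The main subtlety I would watch is the \emph{permanence} of the absorption in $\mathbb{H}$: merely entering an absorbing ball once is not enough for a sink; one needs the trajectory to remain there. The delay makes the $L_{2}$-history lag by $\tau$ behind the scalar bound, so the sharp statement is that the $\mathbb{H}$-state lies in $\mathcal{S}_{0}$ from time $T+\tau$ onward rather than from $T$. The permanence itself, however, is inherited cleanly from the positive invariance of the scalar band, so no compactness or attractor machinery is needed for this step --- the genuine work is the elementary scalar estimate above, and the rest is bookkeeping.
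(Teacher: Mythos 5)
Your proposal is correct, and its final step --- packaging the uniform ultimate bound as a sink $\mathcal{S}_{0}$ with $\mathcal{G}=\mathbb{H}$ and invoking Theorem \ref{TH: StablePeriodicSmith} --- coincides with the paper's. Where you genuinely differ is in how dissipativity is obtained. The paper proves nothing directly: it sets $L\phi:=-\lambda\phi(0)$, observes that the right-hand side $F(t,\phi)$ of \eqref{EQ: DelayEq} satisfies $|F(t,\phi)-L\phi|/\|\phi\|_{\infty}\to 0$ as $\|\phi\|_{\infty}\to+\infty$ (this is where boundedness of $f$ enters), and then cites Theorem 5 of \cite{Smith1990} as a black box to get uniform ultimate boundedness of the classical solutions in $C([-\tau,0];\mathbb{R})$, transferring this to $\mathbb{H}$ through the smoothing property $\psi^{\tau}(q,\mathbb{H})\subset C([-\tau,0];\mathbb{R})$ and taking $\mathcal{S}_{0}=\{u\in\mathbb{H} \ | \ |u|\le R\sqrt{\tau+1}\}$. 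You instead prove dissipativity from scratch by the variation-of-constants estimate on the scalar component; this is legitimate even for generalized solutions with $L_{2}$ initial data, because your integral identity is precisely the first component of the mild-solution formula for \eqref{EQ: DelayAbstractPert} (the first component of the semigroup applied to $B\xi=(b\xi,0)$ is $b\xi e^{-\lambda r}$), so no appeal to classical solvability is needed before time $\tau$ --- arguably a small simplification over the paper's route through $C([-\tau,0];\mathbb{R})$. What each approach buys: yours is self-contained and produces an explicit absorbing radius $M/\lambda$, but it uses the full boundedness of $f$; the paper's citation of Smith's theorem covers the more general situation where $F(t,\phi)-L\phi$ merely grows sublinearly in $\|\phi\|_{\infty}$, so its argument would survive weakening the hypothesis ``$f$ bounded'' to a sublinear growth condition. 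The lag-by-$\tau$ bookkeeping you flag at the end (absorption of the $L_{2}$-history one delay interval after the scalar band is entered, with permanence inherited from the positive invariance of that band) is sound and is the same device the paper uses implicitly when it waits for trajectories to enter $C([-\tau,0];\mathbb{R})$.
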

\begin{proof}
	Consider the bounded linear operator $L \colon C([-\tau,0];\mathbb{R}) \to \mathbb{R}$ defined as $L\phi:=-\lambda \phi(0)$ and let $F(t,\phi):=-\lambda \phi(0) + b f(t,\int_{-\tau}^{0}\rho(s)\phi(s)ds)+g(t)$ be the right-hand side of \eqref{EQ: DelayEq}. It is clear that
	\begin{equation}
	\frac{| F(t,\phi) - L\phi |}{\| \phi \|_{\infty}} \to 0 \text{ as } \| \phi \|_{\infty} \to +\infty.
	\end{equation}
	From Theorem 5 \cite{Smith1990} it follows that the classic solutions of \eqref{EQ: DelayEq} are uniformly bounded, i.~e. there exist $R>0$ such that $|x(t,t_{0},x_{0})| \leq R$ for all $t \geq T(t_{0},x_{0})$, where $x(t,t_{0},x_{0})$ is a solution (with $x(t_{0},t_{0},x_{0})=x_{0}$) to \eqref{EQ: DelayEq} in the classic sense. Let us identify elements of $C([-\tau,0];\mathbb{R})$ with their images in $\mathbb{H}$ under the embedding $\phi \mapsto (\phi(0),\phi)$. Since cocycle $(\psi,\vartheta)$ given by the generalized solutions of \eqref{EQ: DelayAbstractPert} agrees on $C([-\tau,0];\mathbb{R})$ with the cocycle given by the classical solutions and $\psi^{\tau}(q,\mathbb{H}) \subset C([-\tau,0];\mathbb{R})$ (see \cite{Webb1981}), it is clear that $\mathcal{S}_{0}:=\{ u \in \mathbb{H} \ | \ |u| \leq R \sqrt{\tau+1} \}$ is a sink for $(\psi,\vartheta)$ with $\mathcal{G}:=\mathbb{H}$. Now the existence of a Lyapunov stable $\sigma$-periodic trajectory follows from Theorem \ref{TH: StablePeriodicSmith}.
\end{proof}

Note that if we restrict ourselves with the cocycle $(\widetilde{\psi},\widetilde{v})$ generated by the classical solutions of \eqref{EQ: DelayEq} in $C([-\tau,0];\mathbb{R})$, the Lyapunov stability of any $\sigma$-periodic trajectory w.~r.~t. the norm of $\mathbb{H}$ (i.~e. its stability as a trajectory of $(\psi,\vartheta)$) will imply its Lyapunov stability w.~r.~t. the uniform norm (i.~e. its stability as a trajectory of $(\widetilde{\psi},\widetilde{v})$). For details we refer to \cite{Anik2020PB}.


Similar techniques can be applied to study a system
\begin{equation}
\begin{split}
&\dot{x}_{k}(t) = -\lambda_{k} x_{k}(t) + \sum_{j=1}^{n} b_{k j} f_{j}(t,\upsilon_{j}(t)), \\
&\upsilon_{j}(t)=\int_{-\tau_{j}}^{0}\rho_{j}(s)x(t+s)ds, \ k=1,\ldots,n,
\end{split}
\end{equation}
with several monotone nonlinearities $f_{j}$ with $0 \leq \frac{d}{d\upsilon}f_{j} \leq \mu_{j}$ and delays $\tau_{j} \geq 0$. In \cite{Marcus1989} such systems were considered as a model of analog neural networks. Motivated by the above reduction results, one can propose that for $n \geq 3$ (or $n \geq 2$ for the periodic case) such systems may exhibit chaos. To obtain a frequency-domain condition the quadratic form $F(x,\phi,\xi)$, where $x=(x_{1},\ldots,x_{n})$, $\phi=(\phi_{1},\ldots,\phi_{n})$ and $\xi=(\xi_{1},\ldots,\xi_{n})$, may be chosen as a sum
\begin{equation}
F(x,\phi,\xi) = \sum_{j=1}^{n}\kappa_{j}\xi_{j}\left(\mu_{j}\int_{-\tau_{j}}^{0}\rho_{j}(s)\phi_{j}(s)ds - \xi_{j}\right)
\end{equation}
with some coefficients $\kappa_{j} \geq 0$.

\section{Parabolic problems with monotone nonlinear boundary controls}
\label{SEC: Parabolic}

In this section we consider the following parabolic problem
\begin{equation}
\label{EQ: ParabolicProblem}
\begin{split}
u_{t}(t,x)&=\alpha u_{xx}(t,x) - \beta u(t,x) + g_{0}(t,x), \ x \in (0,1), t>t_{0} \\
u(t_{0},x)&=u_{0}(x), x \in (0,1),\\
u_{x}(t,0)&=0, \ u_{x}(t,1)=-f(t,\upsilon(t)), t > t_{0},\\
\upsilon(t)&=\int_{0}^{1}\rho(x)u(t,x)dx, t \geq t_{0}.
\end{split}
\end{equation}
Here $\alpha>0$, $\beta>0$ are real numbers; $u_{0} \in L_{2}(0,1)$; $f$ is a continuously differentiable in $\upsilon$ function such that $0 \leq \frac{d}{d\upsilon} f(t,\upsilon) \leq \mu_{0}$ for all  $t \in \mathbb{R}$, $\upsilon \in \mathbb{R}$; $g_{0}$ is a continuous function. We also suppose that $g$ and $f$ are $\sigma$-periodic in $t$. We suppose that $\rho$ is a continuous function.

In order to write \eqref{EQ: ParabolicProblem} in the abstract form we put $\mathbb{H}=\mathbb{H}_{0}:=L_{2}(0,1)$, $\mathbb{H}_{1}:=W^{1,2}(0,1)$ and $\mathbb{H}_{-1} = \mathbb{H}_{1}^{*} = W^{-1,2}(0,1)$. It can be checked that the natural embedding $\mathbb{H}_{1} \subset \mathbb{H}_{0}$ is compact. We identify $\mathbb{H}$ with its dual and consider the dual embedding $\mathbb{H}_{0} \subset \mathbb{H}_{-1}$. By $(u,v)$ we denote the dual pairing between $u \in \mathbb{H}_{-1}$ and $v \in \mathbb{H}_{1}$, which coincides with the scalar product of $u$ and $v$ in $\mathbb{H}$ if $u \in \mathbb{H}$. The operator $A \in \mathcal{L}(\mathbb{H}_{1},\mathbb{H}_{-1})$ is defined by the bilinear form
\begin{equation}
\label{EQ: FormofA}
a(u,v)=(Au,v):=-\int_{0}^{1}\left(\alpha u_{x}(x)v_{x}(x) + \beta u(x) v(x)\right) dx
\end{equation}
for $u,v \in \mathbb{H}_{1}$. We also put $\Xi := \mathbb{R}$ and consider the operator $B \in \mathcal{L}(\Xi,\mathbb{H}_{-1})$ defined by
\begin{equation}
b(\xi,v)=(B\xi,v):=-\alpha \cdot \xi v(1), \text{ for } \xi \in \Xi, v \in \mathbb{H}_{1}.
\end{equation}

The function $g_{0}(x,t)$ gives rise to a functional on $\mathbb{H}_{1}$ as
\begin{equation}
g(t)[v]:=\int_{0}^{1}g_{0}(x,t)v(x)dx, \text{ for } v \in \mathbb{H}_{1}.
\end{equation}
Now consider the Cauchy problem for the abstract evolution equation
\begin{equation}
\label{EQ: AbstractParabolicProb}
\begin{split}
&\dot{u} = Au + Bf(t,Cu) + g(t),\\
&u(t_{0})=u_{0}.
\end{split}
\end{equation}

\noindent Let $W(t_{0},T)$ be the space of functions $u \colon (t_{0},T) \to \mathbb{H}$ such that $u \in L_{2}(t_{0},T;\mathbb{H}_{1})$ and $\dot{u} \in L_{2}(t_{0},T;\mathbb{H}_{-1})$ (see Chapter III of \cite{Lions1971} for a more precise treatment). There is a continuous embedding $W(t_{0},T) \subset C(t_{0},T;\mathbb{H})$ \cite{Lions1971}. A function $u \in W(t_{0},T)$, $u(t)=u(t,t_{0},u_{0})$, is called a \textit{weak solution} to \eqref{EQ: AbstractParabolicProb} on $[t_{0},T]$ if $u(t_{0})=u_{0}$ and the differential equation in \eqref{EQ: AbstractParabolicProb} is satisfied in the sense of $L_{2}(t_{0},T;\mathbb{H}_{-1})$.

\begin{proposition}
	\label{PROP: ParabCocycle}
	Under the above given assumptions, for any $u_{0} \in L_{2}(0,1)$ and $t_{0} \in \mathbb{R}$ there exists a unique weak solution $u(t,t_{0},u_{0})$, where $t \geq t_{0}$, to \eqref{EQ: AbstractParabolicProb}, which depend continuously on $(t_{0},u_{0})$ in the norm of $W(t_{0},T)$ for any $T>t_{0}$. In particular, the equality $\psi^{t}(t_{0},u_{0}):=u(t+t_{0},t_{0},u_{0})$ defines a $\sigma$-periodic cocycle in $\mathbb{H}$.
\end{proposition}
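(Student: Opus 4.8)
The plan is to realize each weak solution as the fixed point of a map built from the linear theory of J.-L.~Lions, and then to read off uniqueness, continuous dependence and the cocycle structure from a single energy estimate. First I would record the structural facts that make the problem semilinear with a \emph{globally} Lipschitz nonlinearity. The form $-a(\cdot,\cdot)$ from \eqref{EQ: FormofA} is bounded and coercive on $\mathbb{H}_{1}$, since $-a(u,u)=\int_{0}^{1}(\alpha u_{x}^{2}+\beta u^{2})dx \geq \min\{\alpha,\beta\}\,\|u\|_{\mathbb{H}_{1}}^{2}$; the one-dimensional Sobolev embedding $\mathbb{H}_{1}=W^{1,2}(0,1)\hookrightarrow C[0,1]$ shows that the trace $v\mapsto v(1)$ is bounded on $\mathbb{H}_{1}$, so indeed $B\in\mathcal{L}(\Xi,\mathbb{H}_{-1})$, while $C$ is a bounded functional on $\mathbb{H}$. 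Together with $0\leq\partial_{\upsilon}f\leq\mu_{0}$ these give the bound
\[
|Bf(t,Cu_{1})-Bf(t,Cu_{2})|_{\mathbb{H}_{-1}}\leq \|B\|\,\mu_{0}\,\|C\|\,|u_{1}-u_{2}|_{\mathbb{H}},
\]
uniformly in $t$, so $u\mapsto Bf(\cdot,Cu)+g(\cdot)$ maps $C([t_{0},T];\mathbb{H})$ into $L_{2}(t_{0},T;\mathbb{H}_{-1})$ and is globally Lipschitz.

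Next I would invoke the linear theory: for $h\in L_{2}(t_{0},T;\mathbb{H}_{-1})$ the problem $\dot u=Au+h$, $u(t_{0})=u_{0}$, has a unique weak solution $u\in W(t_{0},T)$ depending linearly and continuously on $(u_{0},h)$ (Chapter III of \cite{Lions1971}). Define $\Gamma$ on $C([t_{0},T];\mathbb{H})$ by letting $\Gamma(w)$ be the linear solution with forcing $h(t)=Bf(t,Cw(t))+g(t)$; composing the two Lipschitz maps and using the continuous embedding $W(t_{0},T)\subset C([t_{0},T];\mathbb{H})$ makes $\Gamma$ Lipschitz. On a short interval $[t_{0},t_{0}+\eta]$, with $\eta$ depending only on the structural constants and not on $u_{0}$, the map $\Gamma$ is a contraction, so Banach's theorem yields a unique weak solution; since the Lipschitz constant is global there is no blow-up, and I iterate over consecutive intervals of length $\eta$, using $u(\cdot)$ at the right endpoint as new initial data in $\mathbb{H}$, to reach any $T>t_{0}$.

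The continuous dependence I would extract from an energy estimate. For two solutions $u_{1},u_{2}$ put $w=u_{1}-u_{2}$; then $\dot w=Aw+B(f(t,Cu_{1})-f(t,Cu_{2}))$ holds in $L_{2}(t_{0},T;\mathbb{H}_{-1})$, and the Lions identity $\tfrac{d}{dt}|w(t)|_{\mathbb{H}}^{2}=2(\dot w(t),w(t))$, valid in $W(t_{0},T)$, combined with coercivity and the Lipschitz bound above, after absorbing the gradient term by Young's inequality, gives $\tfrac{d}{dt}|w|_{\mathbb{H}}^{2}\leq\kappa\,|w|_{\mathbb{H}}^{2}$ for some constant $\kappa$. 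Gronwall's lemma then yields $|w(t)|_{\mathbb{H}}^{2}\leq e^{\kappa(t-t_{0})}|w(t_{0})|_{\mathbb{H}}^{2}$, which already gives uniqueness and Lipschitz dependence in $C([t_{0},T];\mathbb{H})$; feeding this back into the coercivity term controls $\|w\|_{L_{2}(t_{0},T;\mathbb{H}_{1})}$ and then, through the equation, $\|\dot w\|_{L_{2}(t_{0},T;\mathbb{H}_{-1})}$, so the dependence is continuous in the $W(t_{0},T)$ norm.

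Finally, $\sigma$-periodicity of $f(t,\cdot)$ and $g(t)$ in $t$ makes the coefficients depend on $t$ only through $\theta=t\bmod\sigma$, so I would set $\mathcal{Q}=\mathcal{S}^{1}_{\sigma}$, $\vartheta^{t}\theta=\theta+t$ and $\psi^{t}(t_{0},u_{0}):=u(t+t_{0},t_{0},u_{0})$; uniqueness gives the evolution (hence cocycle) identity $\psi^{t+s}(q,u)=\psi^{t}(\vartheta^{s}q,\psi^{s}(q,u))$, the identity $\psi^{0}=\mathrm{id}$ is immediate, and joint continuity of $(t,q,u)\mapsto\psi^{t}(q,u)$ follows by combining continuity in $u_{0}$ (the Gronwall estimate), continuity in $t$ (from $u\in C$), and continuity in $q$ (the $t_{0}$-dependence reduces, via periodicity, to the time-shift of the coefficients). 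I expect the main obstacle to be the careful treatment of the boundary control term: because $B$ takes values in $\mathbb{H}_{-1}$ rather than in $\mathbb{H}$, every energy computation must be read in the duality pairing, and one must justify the identity $\tfrac{d}{dt}|w|^{2}=2(\dot w,w)$ together with the trace estimate for $v(1)$ rigorously inside the $W(t_{0},T)$ framework, rather than manipulating the equation pointwise.
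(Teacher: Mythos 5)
Your proposal is correct, but it follows a genuinely different route from the paper. The paper exploits the special structure of the problem: the nonlinearity enters only through the scalar observable $\upsilon(t)=Cu(t)=\int_{0}^{1}\rho(x)u(t,x)\,dx$. Multiplying the PDE by the solution $w(t,x;s)$ of the \emph{backward adjoint} heat problem with terminal datum $\rho$, the paper derives a closed scalar Volterra-type integral equation \eqref{EQ: NonlinearIntegralEq} for $\upsilon$ alone, solves it by a Lipschitz fixed-point argument in $C(t_{0},T;\mathbb{R})$, and then treats the original equation as a \emph{linear} problem with the known forcing $Bf(t,\upsilon(t))+g(t)$, quoting existence, uniqueness and $W(t_{0},T)$-continuity from Lions' linear theory. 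You instead run the standard semilinear scheme: a contraction mapping for the full state $u$ in $C([t_{0},T];\mathbb{H})$ on short intervals (the interval length being uniform thanks to the global Lipschitz bound $0\leq\partial_{\upsilon}f\leq\mu_{0}$ and the boundedness of $B$ and $C$), followed by a Gronwall energy estimate in the duality pairing for uniqueness and continuous dependence. What each approach buys: the paper's reduction keeps the nonlinear fixed point one-dimensional and inherits all regularity of the full solution from the linear theory, at the price of being tied to the rank-one structure of the coupling; your argument is more general (it works for any globally Lipschitz map $\mathbb{H}\to\mathbb{H}_{-1}$, with no need for the nonlinearity to factor through a scalar observable) and yields a quantitative Lipschitz estimate $|u_{1}(t)-u_{2}(t)|_{\mathbb{H}}^{2}\leq e^{\kappa(t-t_{0})}|u_{1}(t_{0})-u_{2}(t_{0})|_{\mathbb{H}}^{2}$, which is the same kind of energy computation that later underlies \textbf{(H3)}. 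Two points deserve slightly more care in your write-up, though both are at the level of detail the paper itself leaves implicit: the contraction step needs the norm of the linear solution operator $(u_{0},h)\mapsto u$ to be uniform over intervals of length at most $\eta$ (it is, by the standard energy estimate), and the continuous dependence on $t_{0}$ (not just $u_{0}$), which you dispatch via the time-shift of the $\sigma$-periodic coefficients, should be combined with the Lipschitz dependence on initial data over the common interval of existence.
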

\begin{proof}
	Put $\upsilon(t):=\int_{0}^{1}\rho(x)u(t,x)dx$. For $s>t_{0}$ we consider the adjoint problem in $(t,x) \in (t_{0},s) \times (0,1)$:
	\begin{equation}
	\begin{split}
	-w_{t}(t,x)&=\alpha w_{xx}(t,x) - \beta w(t,x),\\
	w_{x}(t,0)&=w_{x}(t,1)=0,\\
	w(s,x)&=\rho(x).
	\end{split}
	\end{equation}
	Let $w(t,x;s)$ be a solution of the above problem. Multiplying the first equation in \eqref{EQ: ParabolicProblem} by $w(t,x;s)$ and integrating it on $(t_{0},s) \times (0,1)$, we get
	\begin{equation}
	\label{EQ: NonlinearIntegralEq}
	\begin{split}
	\upsilon(s)=\int_{0}^{1}u_{0}(x)w(t_{0},x;s)dx &- \alpha \int_{t_{0}}^{s}f(t,\upsilon(t))w(t,1;s)dt +\\&+\int_{t_{0}}^{s}\int_{0}^{1}g_{0}(t,x)w(t,x;s)dx dt.
	\end{split}
	\end{equation}
	By the Lipschitz property of $f$, for every $T>0$, $t_{0} < T$ and $u_{0} \in L_{2}(0,1)$ there exists a unique solution $\upsilon(t)=\upsilon(t,t_{0},u_{0})$, $t \in [t_{0},T]$, to \eqref{EQ: NonlinearIntegralEq}, which depend continuously on $(t_{0},u_{0})$ in the norm of $C(t_{0},T;\mathbb{R})$. For every such function $\upsilon(t)$ the linear problem
	\begin{equation}
	\begin{split}
	&\dot{u} = Au + Bf(t,\upsilon(t)) + g(t),\\
	&u(t_{0})=u_{0}.
	\end{split}
	\end{equation}
	has a unique weak solution $u(t)=u(t,t_{0},u_{0})$, where $t \in [t_{0},T]$, which depend continuously on $(t_{0},u_{0})$ in the norm of $W(t_{0},T)$ (see Chapter III in \cite{Lions1971}) and, in virtue of the embedding theorem, continuous dependence also holds in the norm of $C(t_{0},T;\mathbb{H})$. It can be verified that $u$ is a weak solution to \eqref{EQ: AbstractParabolicProb}. Thus, equation \eqref{EQ: AbstractParabolicProb} generates a $\sigma$-periodic cocycle in $\mathbb{H}$.
\end{proof}

\subsection{Linear part}
\label{SUBSEC: LinearPartParab}
By \eqref{EQ: FormofA} for any $\nu \in \mathbb{R}$ and $\lambda=\beta-\nu-\alpha$ we have
\begin{equation}
\label{EQ: Coercivity}
((A+\nu I)u,u) + \lambda (u,u) = -\alpha (u,u)_{1}.
\end{equation}
Therefore, the corresponding linear problem is well-posed and the corresponding to $A+\nu I$ form is regular in the sense of \cite{Likhtarnikov1976} (see, for example, Theorem 1.2, Chapter III in \cite{Lions1971}).

It is well-known that the spectrum of $A$ consists of the eigenvalues $\lambda_{k}=-\alpha \pi^{2} k^{2} - \beta$, where $k=0,1,\ldots$. In what follows we will apply Theorem 4 of \cite{Likhtarnikov1976} to the pair $(A+\nu I,B)$ with $\pi^{2}\alpha + \beta > \nu > \beta$. By Theorem 3.1 in \cite{Liu2003} the pair $(A+\nu I, B)$ is exponentially stabilizable (in the sense of \cite{Likhtarnikov1976}). 

Now we consider $A$ as an operator $\mathcal{D}(A) \subset \mathbb{H} \to \mathbb{H}$ with $\mathcal{D}(A) = W^{2,2}(0,1)$. Let $G(t)$, $t \geq 0$, be the $C_{0}$-semigroup in $\mathbb{H}$ generated by $A$. Then the operator $A+\nu I$ generates the semigroup $e^{\nu t} G(t)$. Let $\pi^{2}\alpha + \beta > \nu > \beta$. From the eigenvalue decomposition of $A$ we have for $A+\nu I$ a one-dimensional unstable subspace $\mathbb{H}^{u}:=\{ u_{0} \in \mathbb{H} \ | \ e^{\nu t}G(t)u_{0} \to 0 \text{ as } t \to -\infty \} = \{ u_{0} \in \mathbb{H} \ | \ |e^{\nu t}G(t)u_{0}|_{\mathbb{H}} \to \infty \text{ as } t \to +\infty  \}$ and its orthogonal complement is the stable subspace $\mathbb{H}^{s} = \{ u_{0} \in \mathbb{H} \ | \ G(t)u_{0} \to 0 \text{ as } t \to +\infty \}$.

\subsection{Frequency-domain condition}
We consider the quadratic form $F(u,\xi)$ for $u \in \mathbb{H}$, $\xi \in \Xi$ defined as (see also Remark \ref{REM: ConstructingQuadraticForms})
\begin{equation}
F(u,\xi):=\xi \left( \mu_{0}\int_{0}^{1}\rho(x)u(x)dx-\xi \right).
\end{equation}
and its Hermitian extension for $u \in \mathbb{H}^{\mathbb{C}}$, $\xi \in \Xi^{\mathbb{C}}$ is given by
\begin{equation}
F^{\mathbb{C}}(u,\xi) := \operatorname{Re}\left[\xi^{*} \left( \mu_{0}\int_{0}^{1}\rho(x)u(x)dx-\xi \right)\right].
\end{equation}
Clearly, the form $F$ satisfy an analog of \textbf{(Q1)}.

The transfer function $W(p):=C^{\mathbb{C}}(A^{\mathbb{C}}-pI^{\mathbb{C}})^{-1} B^{\mathbb{C}}$, where $p \in \mathbb{C}, p \not= \lambda_{k}$, is given by
\begin{equation}
W(p)=-\int_{0}^{1} \rho(x) \widetilde{u}(x,p) dx,
\end{equation}
where $\widetilde{u}$ is the solution of
\begin{equation}
\begin{split}
&\alpha u_{xx} - \beta u = p u,\\
&u_{x}(0,p)=0, u_{x}(1,p)=1.
\end{split}
\end{equation}
Clearly, we have
\begin{equation}
\widetilde{u}(x,p)=\frac{\cosh\left(\sqrt{\frac{p+\beta}{\alpha}}x\right)}{\sqrt{\frac{p+\beta}{\alpha}}\sinh\sqrt{\frac{p+\beta}{\alpha}}}.
\end{equation}
For example, if $\rho \equiv 1$ then we have $W(p)=-\frac{\alpha}{p+\beta}.$

\begin{theorem}
	Suppose for some $\nu>0$ such that $\beta < \nu < \beta + \pi^{2} \alpha$ the frequency-domain condition 
	\begin{enumerate}
		\label{EQ: FrequencyCondParab}
		\item[\textbf{(PF)}] $\operatorname{Re}W(i \omega - \nu) + 1/\mu_{0} > 0$ \text{ for all } $\omega \in [-\infty,+\infty]$
	\end{enumerate}
    is satisfied. Then there exists a compact self-adjoint operator $P^{*}=P \in \mathcal{L}(\mathbb{H},\mathbb{H})$ such that for some $\delta>0$ we have \textbf{(H1)}, \textbf{(H2)} with $j=1$ and \textbf{(H3)} satisfied for the cocycle $(\psi,\vartheta)$ given in Proposition \ref{PROP: ParabCocycle}.
\end{theorem}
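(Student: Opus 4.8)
The plan is to follow the scheme of the proof of Theorem~\ref{TH: DelayedFeedback}, adapted to the rigged triple $\mathbb{H}_{1} \subset \mathbb{H} \subset \mathbb{H}_{-1}$, and then to supply the one genuinely new ingredient, namely the compactness of $P$. First I would check the hypotheses of the frequency theorem (Theorem~\ref{TH: YakubovichLikhtarnikovC0}, in the form of Theorem~4 of \cite{Likhtarnikov1976}) for the pair $(A+\nu I, B)$ and the form $F$. The regularity of the form associated with $A+\nu I$ follows from the coercivity identity \eqref{EQ: Coercivity}; the $L_{2}$-controllability follows from the exponential stabilizability of $(A+\nu I, B)$ guaranteed by Theorem~3.1 of \cite{Liu2003} (see Subsection~\ref{SUBSEC: LinearPartParab}); and the condition $\alpha_{3}<0$ is exactly \textbf{(PF)}, as the computation of $F^{\mathbb{C}}(-(A+\nu I - i\omega I)^{-1}B\xi, \xi)$ shows (cf.\ \eqref{EQ: DelayFreqDomCond}). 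This produces a self-adjoint $P \in \mathcal{L}(\mathbb{H})$ satisfying $2((A+\nu I)u + B\xi, Pu) + F(u,\xi) \leq -\delta(|u|^{2}+|\xi|^{2})$ for $u \in \mathbb{H}_{1}$, $\xi \in \Xi$.

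Next I would extract \textbf{(H1)} and \textbf{(H2)}. Setting $\xi = 0$ gives $2((A+\nu I)u, Pu) \leq -\delta |u|^{2}$. For $\beta < \nu < \beta + \pi^{2}\alpha$ none of the numbers $\lambda_{k}+\nu = -\alpha\pi^{2}k^{2} - \beta + \nu$ vanishes, so $0$ is a regular value of $A+\nu I$ and $\mathcal{R}(A+\nu I) = \mathbb{H}$; Proposition~\ref{PROP: TrivialKernel} then gives $\operatorname{Ker} P = \{0\}$. Writing $\mathbb{H}^{\pm}$ for the spectral subspaces of $P$ associated with the positive and negative parts of its spectrum, we obtain orthogonal $P$-invariant subspaces with $\mathbb{H} = \mathbb{H}^{+}\oplus\mathbb{H}^{-}$ and $\restr{P}{\mathbb{H}^{\pm}}$ of the respective strict sign, i.e.\ \textbf{(H1)}. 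Applying Propositions~\ref{PROP: EstimatesSubspaces} and \ref{PROP: EstimateEigenvaluesWithStable} to $A+\nu I$, $P$ and the spaces $\mathbb{H}^{s}$, $\mathbb{H}^{u}$ of Subsection~\ref{SUBSEC: LinearPartParab}, where $\dim \mathbb{H}^{u}=1$, yields $\dim \mathbb{H}^{-} = 1$, which is \textbf{(H2)} with $j=1$.

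For \textbf{(H3)} I would substitute $u = u_{1}-u_{2}$ and $\xi = f(t,Cu_{1}) - f(t,Cu_{2})$ into the operator inequality and use the analog of \textbf{(Q1)} to discard $F$, obtaining $2(A(u_{1}-u_{2}) + B(f(t,Cu_{1})-f(t,Cu_{2})), P(u_{1}-u_{2})) \leq -\delta|u_{1}-u_{2}|^{2}$. For two weak solutions $u_{1}, u_{2} \in W(t_{0},T)$ the function $s \mapsto e^{2\nu s} V(u_{1}(s)-u_{2}(s))$ is absolutely continuous, by the chain rule valid in the rigged triple (Chapter~III of \cite{Lions1971}), with derivative bounded above by $-\delta e^{2\nu s}|u_{1}(s)-u_{2}(s)|^{2}$ for a.e.\ $s$; integration over $[l,r]$ gives \eqref{EQ: H3TrajectoryForm}, first for weak solutions and then, by the continuous dependence in Proposition~\ref{PROP: ParabCocycle}, for all trajectories of the cocycle.

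The main obstacle is the compactness of $P$, which has no counterpart in the delayed case (where \textbf{(COM1)} fails). Here I would exploit that the embedding $\mathbb{H}_{1} \subset \mathbb{H}$ is compact, so that $A$ has compact resolvent and the semigroup $e^{\nu t}G(t)$ is compact for every $t>0$. Combining this smoothing with the dichotomy $\mathbb{H} = \mathbb{H}^{s} \oplus \mathbb{H}^{u}$, $\dim\mathbb{H}^{u}=1$, the operator $P$ furnished by the frequency theorem is represented through the dichotomy-split semigroup: on the stable block it is a norm-convergent integral of operators compact for positive times, hence compact, while on the one-dimensional unstable block it is of finite rank. The propositions on the Lyapunov inequality and the dichotomy collected in Appendix~\ref{SEC: LyapunivIneq} provide the precise representation, and together they give that $P$ is compact, so that \textbf{(COM1)} also holds. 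This completes the verification of all the required properties.
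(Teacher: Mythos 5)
Your derivation of \textbf{(H1)}, \textbf{(H2)} and \textbf{(H3)} matches the paper's proof essentially step for step: apply the frequency theorem for evolutionary equations (Theorem 4 of \cite{Likhtarnikov1976} — note that the appendix version, Theorem \ref{TH: YakubovichLikhtarnikovC0}, does not literally apply here, since the boundary operator $B$ lies in $\mathcal{L}(\Xi,\mathbb{H}_{-1})$ and is unbounded into $\mathbb{H}$), then put $\xi=0$ and invoke Propositions \ref{PROP: TrivialKernel}, \ref{PROP: EstimatesSubspaces} and \ref{PROP: EstimateEigenvaluesWithStable}, then put $u=u_{1}-u_{2}$, $\xi=f(t,Cu_{1})-f(t,Cu_{2})$, use \textbf{(Q1)} and integrate along weak solutions. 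One small inaccuracy: in this setting the frequency theorem yields the stronger right-hand side $-\delta(|u|^{2}_{\mathbb{H}_{1}}+|\xi|^{2})$, as recorded in \eqref{EQ: FreqFormIneqParab}; your $\mathbb{H}$-norm version is implied by it and suffices for these steps.

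The compactness of $P$ is where your argument has a genuine gap. You claim that $P$ admits a representation through the dichotomy-split semigroup (a norm-convergent integral of compact operators on the stable block plus a finite-rank piece on $\mathbb{H}^{u}$) and that the propositions of Appendix \ref{SEC: LyapunivIneq} "provide the precise representation." They do not: those propositions only extract sign and kernel information from the Lyapunov inequality for an \emph{arbitrary} solution $P$; no integral representation of $P$ appears anywhere in the paper, and the frequency theorem does not furnish one. Moreover, no argument of this kind can work, because compactness is not a property shared by all solutions of the operator inequality: if $P$ satisfies \eqref{EQ: FreqFormIneqParab}, then so does $P-\epsilon I$ for small $\epsilon>0$, since the extra term
\begin{equation*}
-2\epsilon\left((A+\nu I)u+B\xi,u\right)=2\epsilon\alpha|u_{x}|^{2}+2\epsilon(\beta-\nu)|u|^{2}+2\epsilon\alpha\,\xi u(1)
\end{equation*}
is bounded by $C\epsilon\left(|u|^{2}_{\mathbb{H}_{1}}+|\xi|^{2}\right)$ (using the embedding $W^{1,2}(0,1)\subset C[0,1]$ for the boundary term) and is therefore absorbed by the $\mathbb{H}_{1}$-coercive right-hand side; yet $P-\epsilon I$ is never compact in infinite dimensions. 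Hence compactness must come from the \emph{particular} solution produced by the frequency theorem, and this is exactly what the paper uses: Theorem 4 of \cite{Likhtarnikov1976} asserts, in addition to the inequality, the regularity $P\in\mathcal{L}(\mathbb{H},\mathbb{H}_{1})\cap\mathcal{L}(\mathbb{H}_{-1},\mathbb{H})$, and composing the bounded map $P\colon\mathbb{H}\to\mathbb{H}_{1}$ with the compact embedding $\mathbb{H}_{1}\subset\mathbb{H}$ gives the compactness of $P\colon\mathbb{H}\to\mathbb{H}$ in one line. Your intuition that parabolic smoothing is responsible is in the right spirit (and is consistent with \textbf{(COM1)} failing for delay equations), but the carrier of that smoothing is this regularity clause in the statement of the frequency theorem, not a semigroup representation of $P$.
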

\begin{proof}
	Frequency-domain condition \textbf{(PF)} along with the regularity of the linear problem and the exponential stabilizability of the pair $(A+\nu I, B)$ (see Subsection \ref{SUBSEC: LinearPartParab}) allows us to apply Theorem 4 of \cite{Likhtarnikov1976} to get a self-adjoint operator $P^{*}=P \in \mathcal{L}(\mathbb{H})$ such that 
	\begin{equation}
	\label{EQ: FreqFormIneqParab}
	2 ((A+\nu I)u + B\xi, Pu) + F(u,\xi)  \leq -\delta (|u|^{2}_{\mathbb{H}_{1}} + |\xi|^{2}), \text{ for } u \in \mathbb{H}_{1}, \xi \in \mathbb{R}.
	\end{equation}
	
	\noindent In addition, we have $P \in \mathcal{L}(\mathbb{H},\mathbb{H}_{1}) \cap \mathcal{L}(\mathbb{H}_{-1},\mathbb{H})$. Since the inclusion $\mathbb{H}_{1} \subset \mathbb{H}$ is compact, the operator $P \colon \mathbb{H} \to \mathbb{H}$ is compact.
	
	Putting $\xi = 0$ in \eqref{EQ: FreqFormIneqParab} and since the inclusion $\mathbb{H}_{1} \subset \mathbb{H}$ is bounded with the norm $1$, we get
	\begin{equation}
	\label{EQ: LyapunovInequalityParab}
	((A+\nu I)u,Pu) \leq -\delta |u|^{2}_{\mathbb{H}} \text{ for all } u \in \mathbb{H}_{1}.
	\end{equation}
	But for $u \in W^{2,2}(0,1) =: \mathcal{D}(A) \subset \mathbb{H}_{1}$ we have $Au \in \mathbb{H}$ and, consequently, for such $u$ the inequality in \eqref{EQ: LyapunovInequalityParab} is satisfied in the sense of $\mathbb{H}$. Thus from \eqref{EQ: LyapunovInequalityParab} and Proposition \ref{PROP: TrivialKernel} we get that $\operatorname{Ker}P=0$. In virtue of Propositions \ref{PROP: EstimatesSubspaces} and \ref{PROP: EstimateEigenvaluesWithStable} we also get that $\dim \mathbb{H}^{-}=1$.
	
	Putting $u=u_{1}-u_{2}$ and $\xi=f(Cu_{1})-f(Cu_{2})$ in \eqref{EQ: FreqFormIneqParab} and using property \textbf{(Q1)} of the form $F$ we get
	\begin{equation}
	\label{EQ: ParabFreqMonotonIneq}
	2(A(u_{1}-u_{2}) + B(f(Cu_{1})-f(Cu_{2})),P(u_{1}-u_{2})) \leq -\delta |u_{1}-u_{2}|^{2}.
	\end{equation}
	
	By the above properties of $P$ and \eqref{EQ: ParabFreqMonotonIneq}, for any two weak solutions $u_{j}(t)=u_{j}(t,t_{0},u_{j}(0))$, where $j=1,2$ and $t \geq t_{0}$, to \eqref{EQ: AbstractParabolicProb} we have for almost all $s \geq t_{0}$
	\begin{equation}
	\label{EQ: ParabFormDifferentiation}
	\begin{split}
	\frac{d}{ds}\left[ e^{2\nu s} V(u_{1}(s)-u_{2}(s)) \right] 
	\leq -\delta e^{2\nu s} |u_{1}(s)-u_{2}(s)|^{2}_{\mathbb{H}}.
	\end{split}
	\end{equation}
	Integrating \eqref{EQ: ParabFormDifferentiation} on any segment $[l,r]$, where $l \geq t_{0}$, we get \textbf{(H3)}.
\end{proof}

Summarizing the above, we obtain the following theorem.
\begin{theorem}
	\label{TH: FinalParabBoundaryTh}
	Let $(\psi,\vartheta)$ be the $\sigma$-periodic cocycle generated by \eqref{EQ: AbstractParabolicProb} and suppose \textbf{(PF)} is satisfied with some $\nu>0$ such that $\beta < \nu < \beta + \pi^{2} \alpha$. Then any bounded in the future trajectory converges to a $\sigma$-periodic trajectory.
\end{theorem}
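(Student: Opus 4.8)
The plan is to assemble the hypotheses of the abstract convergence result, Theorem \ref{TH: ConvergenceTh}, for the cocycle $(\psi,\vartheta)$ and then invoke it verbatim. First I would recall that Proposition \ref{PROP: ParabCocycle} guarantees that \eqref{EQ: AbstractParabolicProb} generates a genuine $\sigma$-periodic cocycle in $\mathbb{H}$, so the periodicity hypothesis is in place. Next, the immediately preceding theorem shows that the frequency-domain condition \textbf{(PF)}, under the stated restriction $\beta < \nu < \beta + \pi^{2}\alpha$, produces a compact self-adjoint operator $P \in \mathcal{L}(\mathbb{H})$ for which \textbf{(H1)}, \textbf{(H2)} with $j=1$ and \textbf{(H3)} hold. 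In particular, since that $P$ is compact, assumption \textbf{(COM1)} is satisfied for the cocycle.

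With \textbf{(COM1)} in hand, together with \textbf{(H1)}, \textbf{(H2)} and \textbf{(H3)}, Corollary \ref{COR: CompatTrajectories} applies and yields \textbf{(COM3)}: every bounded-in-the-future semi-trajectory of the cocycle is compact. This supplies the only remaining ingredient required by the convergence theorem. I would then apply Theorem \ref{TH: ConvergenceTh} directly: the cocycle is $\sigma$-periodic, and \textbf{(H1)}, \textbf{(H2)} with $j=1$, \textbf{(H3)} and \textbf{(COM3)} all hold. Given a bounded-in-the-future trajectory $u(t)=\psi^{t}(q,u_{0})$, $t \geq 0$, the theorem produces a $\sigma$-periodic trajectory $u^{*}$ at $q$ with $u(t)-u^{*}(t)\to 0$ as $t\to+\infty$, which is precisely the asserted convergence; moreover, since the convergence in Theorem \ref{TH: ConvergenceTh} is measured in the norm of $\mathbb{H}=L_{2}(0,1)$, the conclusion holds in the $L_{2}$-sense.

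There is essentially no new analytic obstacle at this stage, because all the substantive work has already been done: the construction of the operator $P$ via the Yakubovich--Likhtarnikov frequency theorem and the verification of \textbf{(H1)}--\textbf{(H3)} in the preceding theorem, and the structural and compactness machinery in Section \ref{SEC: AmenableSets}. The only point requiring a moment's care is the logical chaining, namely confirming that the compactness of $P$ (i.e.\ \textbf{(COM1)}) feeds correctly into Corollary \ref{COR: CompatTrajectories} to deliver \textbf{(COM3)}; once this is checked, Theorem \ref{TH: ConvergenceTh} closes the argument immediately. Thus the present theorem is a straightforward synthesis of the abstract reduction and convergence principles with the concrete verification carried out for the parabolic boundary-control problem \eqref{EQ: ParabolicProblem}.
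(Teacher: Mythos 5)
Your proposal is correct and follows exactly the paper's (implicit) argument: the paper's preceding theorem gives the compact operator $P$ with \textbf{(H1)}, \textbf{(H2)} ($j=1$) and \textbf{(H3)}, compactness of $P$ is \textbf{(COM1)}, Corollary \ref{COR: CompatTrajectories} then yields \textbf{(COM3)}, and Theorem \ref{TH: ConvergenceTh} closes the argument. The paper states this theorem with the words ``Summarizing the above,'' which is precisely the synthesis you carried out.
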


In the special case $\rho \equiv 1$ the inequality in \textbf{(PF)} is equivalent to
\begin{equation}
-\frac{\alpha(\beta - \nu)}{(\beta-\nu)^{2} + \omega^{2}} + 1/\mu_{0} > 0
\end{equation}
Obviously, we have it always satisfied if $\nu > \beta$.  However, if it holds with $\nu=0$, i.~e. we have $\mu_{0} < \frac{\beta}{\alpha}$, then there is a unique stationary solution for the unperturbed problem, i.~e. with $g \equiv 0$ and $f$ independent of $t$. If $\mu_{0} \geq \frac{\beta}{\alpha}$ then there may be nontrivial stationary solutions and, consequently, under the conditions of Theorem \ref{TH: FinalParabBoundaryTh} there may be several $\sigma$-periodic trajectories.
\section*{Acknowledgements}

I thank my supervisor V.~Reitmann for many useful discussions on the topic and D.~N.~Cheban for his encouragement.

\appendix
\section{Frequency theorem of the Yakubovich-Likhtarnikov for $C_{0}$-semigroups}
\label{SEC: FrequencyTheorem}
In this section $\mathbb{H}$ and $\Xi$ denote Hilbert spaces over $\mathbb{C}$. Let $A$ be the generator of a $C_{0}$-semigroup $G(t)$, $t \geq 0$, in $\mathbb{H}$ with the domain $\mathcal{D}(A)$. Suppose $B \in \mathcal{L}(\Xi, \mathbb{H})$ is given. The equation
\begin{equation}
\label{EQ: ControlSystemInH}
\dot{u} = Au + B\xi
\end{equation}
is called a \textit{control system}. It is well know that for $T>0$ and every \textit{control function} $\xi \in L^{2}(0,T;\mathcal{U})$ and $u_{0} \in \mathbb{H}$ there exists a unique mild solution $u(t)=u(t,u_{0},\xi)$, $t \in [0,T]$, to \eqref{EQ: ControlSystemInH} satisfying $u(0)=u_{0}$.

For any bounded operator $C \in \mathcal{L}({\mathbb{H}, \Xi})$ the operator $A+BC$ will be the generator of some $C_{0}$-semigroup $G_{C}(t)$, $t \geq 0$ (see Theorem 7.5 in \cite{Krein1971}). The pair $(A,B)$ is called 
\begin{enumerate}
	\item $L_{2}$-\textit{stabilizable} if $C \in \mathcal{L}(\mathbb{H},\Xi)$ exists such that $G_{C}(t)u_{0} \in L_{2}([0,+\infty),\mathbb{H})$ for every $x_{0} \in \mathbb{H}$;
	\item \textit{exponentially stabilizable} if $C \in \mathcal{L}(\mathbb{H},\Xi)$ exists such that for some constants $M>0$ and $\varepsilon>0$ we have $\| G_{C}(t) \| \leq M e^{-\varepsilon t}$;
	\item $L_{2}$-\textit{controllable} if for every $u_{0} \in \mathbb{H}$ there exists a control $\xi \in L_{2}(0,+\infty; \Xi)$ such that $u(\cdot,u_{0},\xi) \in L_{2}(0,+\infty; \mathbb{H})$.
\end{enumerate}

It is not hard to see that an exponential stabilizable pair is $L_{2}$-stabilizable and an $L_{2}$-stabilizable pair is $L_{2}$ controllable. However, it turns out that these properties are equivalent (this is a byproduct of the frequency theorem, see \cite{LouisWexler1991, Likhtarnikov1977}).

Suppose we are given with a continuous Hermitian form on $\mathbb{H} \times \Xi$:
\begin{equation}
F(u,\xi):=(F_{1}u,u) + 2\operatorname{Re}(F_{2}u,\xi) + (F_{3}\xi,\xi),
\end{equation}
where $F^{*}_{1}=F_{1} \in \mathcal{L}(\mathbb{H},\mathbb{H})$, $F_{2} \in \mathcal{L}(\mathbb{H},\Xi)$ and $F^{*}_{3}=F_{3} \in \mathcal{L}(\Xi,\Xi)$. We introduce the number

\begin{equation}
\alpha_{2} :=\sup \frac{F(u,\xi)}{|u|^{2}+|\xi|^{2}},
\end{equation}
where the supremum is taken over all triples $(\omega,u,\xi) \in \mathbb{R} \times \mathbb{H} \times \Xi$ such that $i\omega(u,v) = (u,A^{*}v) + (B\xi,v)$ holds for all $v \in \mathcal{D}(A^{*})$. If for some $\varepsilon>0$ every point from the strip $|\operatorname{Re} \lambda| \leq \varepsilon$ is regular for $A$, we consider also the value
\begin{equation}
\alpha_{3}:=\sup\limits_{\omega \in \mathbb{R}}\sup\limits_{\xi \in \Xi} \frac{F((i\omega I - A)^{-1}B\xi,\xi)}{|\xi|^{2}}.
\end{equation}

The following theorem is originally proved by Yakubovich and Likhtarnikov in \cite{Likhtarnikov1977} (see Theorem 3 therein). The main result of \cite{Likhtarnikov1977} was rediscovered later by Louis and Wexler in \cite{LouisWexler1991} (see Theorem 2 therein).
\begin{theorem}
	\label{TH: YakubovichLikhtarnikovC0}
	Suppose $(A,B)$ is $L_{2}$-controllable. Then the following properties are equivalent 
	\begin{enumerate}
		\item There exists a self-adjoint operator $P^{*}=P \in \mathcal{L}(\mathbb{H},\mathbb{H})$ such that for some $\delta>0$ the inequality
		\begin{equation}
		2\operatorname{Re}(Au+B\xi, Pu) + F(u,\xi) \leq -\delta \left(|u|^{2} + |\xi|^{2} \right)
		\end{equation}
		is satisfied for all $u \in \mathcal{D}(A)$ and $\xi \in \Xi$.
		
		\item $\alpha_{2}<0$.
		
		Moreover, if $\alpha_{3}$ is well-defined in the above sense then (2) is equivalent to $\alpha_{3}<0$.
	\end{enumerate}
\end{theorem}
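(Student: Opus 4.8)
The plan is to prove the two implications separately and then treat the reduction to $\alpha_{3}$ under the spectral-gap hypothesis. The implication $(1)\Rightarrow(2)$ is the soft one. Given $P=P^{*}$ satisfying the dissipation inequality, take any triple $(\omega,u,\xi)$ admissible for $\alpha_{2}$, i.~e. one with $i\omega(u,v)=(u,A^{*}v)+(B\xi,v)$ for all $v\in\mathcal{D}(A^{*})$; this is the weak form of $i\omega u=Au+B\xi$. For $u\in\mathcal{D}(A)$ one computes
\[
2\operatorname{Re}(Au+B\xi,Pu)=2\operatorname{Re}(i\omega u,Pu)=2\operatorname{Re}\big(i\omega(u,Pu)\big)=0,
\]
since $(u,Pu)$ is real. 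Substituting into the operator inequality leaves $F(u,\xi)\leq-\delta(|u|^{2}+|\xi|^{2})$, whence $\alpha_{2}\leq-\delta<0$; a density argument approximating weak solutions of the constraint by elements of $\mathcal{D}(A)$ removes the restriction $u\in\mathcal{D}(A)$.

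For the reduction $\alpha_{2}<0\iff\alpha_{3}<0$ when the strip $|\operatorname{Re}\lambda|\leq\varepsilon$ consists of regular points, I would observe that for real $\omega$ the operator $i\omega I-A$ is boundedly invertible, so the constraint forces $u=(i\omega I-A)^{-1}B\xi$ and the feasible set of $\alpha_{2}$ is exactly $\{((i\omega I-A)^{-1}B\xi,\xi)\}$. The spectral gap yields a uniform resolvent bound $\|(i\omega I-A)^{-1}\|\leq M$ for $\omega\in\mathbb{R}$, hence $|u|\leq M\|B\|\,|\xi|$ and $|u|^{2}+|\xi|^{2}\leq(M^{2}\|B\|^{2}+1)|\xi|^{2}$; this two-sided comparison converts each strict sign statement into the other.

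The substance is $(2)\Rightarrow(1)$, which I would carry out by the optimal-control / value-function method. A Parseval argument first converts the frequency condition into an integral one: for any $\xi\in L_{2}(0,\infty;\Xi)$ whose trajectory from the zero initial state remains in $L_{2}(0,\infty;\mathbb{H})$, the Fourier transforms satisfy the weak constraint $i\omega\hat{u}(\omega)=A\hat{u}(\omega)+B\hat{\xi}(\omega)$ for almost every $\omega$, so $(\hat{u}(\omega),\hat{\xi}(\omega))$ is feasible for $\alpha_{2}$ and
\[
\int_{0}^{\infty}F(u,\xi)\,dt=\frac{1}{2\pi}\int_{-\infty}^{\infty}F(\hat{u}(\omega),\hat{\xi}(\omega))\,d\omega\leq\alpha_{2}\big(\|u\|_{L_{2}}^{2}+\|\xi\|_{L_{2}}^{2}\big)<0.
\]
Using $L_{2}$-controllability to guarantee admissible controls (those keeping $u\in L_{2}$) exist for every $u_{0}$, I would define the value function $\mathcal{V}(u_{0}):=\sup_{\xi}\int_{0}^{\infty}F(u(t,u_{0},\xi),\xi(t))\,dt$ over admissible $\xi$; the integral inequality gives $\mathcal{V}(0)=0$ and, by a completing-the-square comparison of an arbitrary admissible trajectory with a stabilizing one whose difference is zero-initiated and therefore of non-positive integral cost by the display above, the two-sided bound $|\mathcal{V}(u_{0})|\leq c|u_{0}|^{2}$. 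Homogeneity and the parallelogram law show $\mathcal{V}$ is a bounded quadratic form, so by polarization $\mathcal{V}(u_{0})=(Pu_{0},u_{0})$ for a bounded self-adjoint $P$. The Bellman optimality relation for $\mathcal{V}$, differentiated along trajectories starting in $\mathcal{D}(A)$, yields $2\operatorname{Re}(Au+B\xi,Pu)+F(u,\xi)\leq0$; applying this to $F$ replaced by $F-\delta'(|u|^{2}+|\xi|^{2})$, which still satisfies the frequency condition for small $\delta'>0$ because $\alpha_{2}<0$ strictly, upgrades it to the required strict inequality with $\delta=\delta'$.

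The hard part will be the construction and regularity of $\mathcal{V}$: showing the supremum is finite, attained, and depends on $u_{0}$ as a genuinely bounded quadratic form demands a careful combination of $L_{2}$-controllability (to produce admissible controls and the stabilizing comparison trajectory) with the coercive integral form of the frequency condition (to bound the supremum and secure the weak upper semicontinuity needed for existence of an optimizer). The second delicate point is extracting a pointwise operator inequality on $\mathcal{D}(A)$ from the integral optimality relation while $A$ is unbounded, which requires handling the interplay of mild versus strong solutions and the domains $\mathcal{D}(A)$, $\mathcal{D}(A^{*})$ rigorously rather than by the formal differentiation above. This analytic core is exactly the content realized in the Yakubovich--Likhtarnikov and Louis--Wexler proofs cited in the statement.
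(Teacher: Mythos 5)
The paper itself contains no proof of this statement to compare against: Appendix \ref{SEC: FrequencyTheorem} reproduces the theorem ``for the convenience of the reader'' and attributes it to Theorem 3 of \cite{Likhtarnikov1977} and Theorem 2 of \cite{LouisWexler1991}. Judged on its own, your outline of $(2)\Rightarrow(1)$ --- Plancherel to turn $\alpha_{2}<0$ into an integral quadratic constraint on zero-initiated trajectories, $L_{2}$-controllability to supply admissible controls and a comparison trajectory, a value function identified by polarization as a bounded quadratic form $(Pu_{0},u_{0})$, a Bellman-type inequality, and the perturbation $F\mapsto F-\delta'(|u|^{2}+|\xi|^{2})$ to upgrade to strictness --- is exactly the strategy of the cited sources. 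But you defer precisely the steps that constitute the theorem: finiteness and attainment of the supremum, the fact that a supremum of quadratic functionals over an affine set of trajectories is again a bounded quadratic form, and the extraction of a pointwise operator inequality on $\mathcal{D}(A)$ from the integral optimality relation. As written, this direction is a correctly aimed outline, not a proof. (In the easy direction $(1)\Rightarrow(2)$ your ``density argument'' is unnecessary: the weak constraint makes $v\mapsto(u,A^{*}v)$ a bounded functional, hence $u\in\mathcal{D}(A^{**})=\mathcal{D}(A)$ because $A$ is closed and densely defined, so every feasible triple satisfies $i\omega u=Au+B\xi$ strongly and your computation applies verbatim.)

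There is also a step that is false as stated. Regularity of the strip $|\operatorname{Re}\lambda|\leq\varepsilon$ does \emph{not} yield a uniform resolvent bound $\|(i\omega I-A)^{-1}\|\leq M$ for $\omega\in\mathbb{R}$ when $A$ merely generates a $C_{0}$-semigroup on a Hilbert space; this is exactly the phenomenon behind the Gearhart--Pr\"uss theorem. Concretely, $A=\bigoplus_{n}\left((-1+in)I_{n}+N_{n}\right)$ on $\bigoplus_{n}\mathbb{C}^{n}$, with $N_{n}$ the nilpotent Jordan block, generates a contraction semigroup whose spectrum is $\{-1+in\}$, so a strip about $i\mathbb{R}$ is regular, yet $\|(inI-A)^{-1}\|\geq cn\to\infty$. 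Consequently your ``two-sided comparison'' only delivers the trivial implication $\alpha_{2}<0\Rightarrow\alpha_{3}\leq\alpha_{2}<0$ (since $|\xi|^{2}\leq|u|^{2}+|\xi|^{2}$); the converse genuinely needs $\sup_{\omega}\|(i\omega I-A)^{-1}B\|<\infty$, and no rerouting can avoid some such hypothesis: in the example above, with $B=I$ and $F(u,\xi)=-|\xi|^{2}$, the pair $(A,B)$ is exponentially stabilizable (hence $L_{2}$-controllable) and $\alpha_{3}=-1<0$, while $\alpha_{2}=0$ and statement (1) fails. So the $\alpha_{3}$-clause is delicate: the robust form of the frequency condition is the $\alpha_{2}$-form (this is what \cite{LouisWexler1991} use), and the passage to $\alpha_{3}$ is legitimate only under a uniform bound on $\|(i\omega I-A)^{-1}B\|$ --- which does hold in the paper's applications, where the relevant operators have compact resolvent and spectrum at a fixed distance from the axis, but which must be assumed or verified rather than inferred from the spectral gap, as your argument does.
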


\begin{remark}
	In practise control systems are usually considered in real Hilbert spaces and Theorem \ref{TH: YakubovichLikhtarnikovC0} is applied to the complexifications of these spaces and corresponding linear operators. Thus, in this case the operator $P=P^{*}$ from Theorem \ref{TH: YakubovichLikhtarnikovC0} acts in the complexification\footnote{Recall that the \textit{complexification} of a real Hilbert space $\mathbb{H}$ with the scalar product $(\cdot,\cdot)_{\mathbb{H}}$ is the external direct sum $\mathbb{H}^{\mathbb{C}} := \mathbb{H} \oplus \mathbb{H}$ endowed with the multiplication $(a+ i b) (u,v) := (au - bv, av + bu)$ for $a,b \in \mathbb{R}$ and $u,v \in \mathbb{H}$ and the inner product
		\begin{equation*}
		\langle (u_{1},v_{1}), (u_{2},v_{2}) \rangle := (u_{1}, u_{2})_{\mathbb{H}} - i(u_{1},v_{2})_{\mathbb{H}} + i (v_{1},u_{2})_{\mathbb{H}} + (v_{1},v_{2})_{\mathbb{H}}.
		\end{equation*}
		For a bounded linear operator $A \colon \mathbb{H} \to \mathbb{H}$ the complexification $A^{\mathbb{C}} \colon \mathbb{H}^{\mathbb{C}} \to \mathbb{H}^{\mathbb{C}}$ is defined by $A^{\mathbb{C}}(u,v):=(Au,Av)$, where $u,v \in \mathbb{H}$. For every quadratic form $F$ in $\mathbb{H}$ there is corresponding \textit{Hermitian} extension $F^{\mathbb{C}}$ of $F$ defined by $F^{\mathbb{C}}(u,v):=F(u)+F(v)$ for $u,v \in \mathbb{H}$.
	} 
	$\mathbb{H}^{\mathbb{C}}$ of some real Hilbert space $\mathbb{H}$. Note that any operator $P \colon \mathbb{H}^{\mathbb{C}} \to \mathbb{H}^{\mathbb{C}}$ can be represented by a $2\times 2$-matrix
	\begin{equation}
	\label{EQ: MatrixRepresentation}	
	\begin{bmatrix}
	\widetilde{P} & Q\\
	-Q & \widetilde{P}
	\end{bmatrix},
	\end{equation}
	where $\widetilde{P}, Q \in \mathcal{L}(\mathbb{H})$. It can be shown that the equality $P=P^{*}$ is equivalent to the following two conditions: $\widetilde{P}^{*} = \widetilde{P}$ and $(Qu,u)=0$, for all $u \in \mathbb{H}$. If the control problem \eqref{EQ: ControlSystemInH} is posed in real Hilbert spaces $\mathbb{H}$ and $\Xi$ with the real quadratic form $F$, from Theorem \ref{TH: YakubovichLikhtarnikovC0} applied to complexifications $A^{\mathbb{C}}, B^{\mathbb{C}}$ and the Hermitian extension $F^{\mathbb{C}}$ of $F$ we get the operator $P=P^{*} \colon \mathbb{H}^{\mathbb{C}} \to \mathbb{H}^{\mathbb{C}}$. It is easy to check that the self-adjoint operator $\widetilde{P} \colon \mathbb{H} \to \mathbb{H}$ defined in \eqref{EQ: MatrixRepresentation} satisfy for all $u \in \mathcal{D}(A)$ and $\xi \in \Xi$ the inequality
	\begin{equation}
	2(Au+B\xi,\widetilde{P} u) + F(u,\xi) \leq -\delta ( |u|^{2} + |\xi|^{2} ).
	\end{equation}
	It is clear that $\widetilde{P}$ inherits the compactness of $P$ if it holds.
\end{remark}
\section{Spectral properties of solutions to Lyapunov inequalities}
\label{SEC: LyapunivIneq}

In this section $A \colon \mathcal{D}(A) \subset \mathbb{H} \to \mathbb{H}$ is the generator of a $C_{0}$-semigroup $G(t)$, $t \geq 0$, in a real Hilbert space $\mathbb{H}$ and $P$ is a bounded self-adjoint operator in $\mathbb{H}$. Using the functional calculus of self-adjoint operators (\cite{Helemskii2006}, p. 370--371) we get the decomposition of $\mathbb{H}$ into three orthogonal $P$-invariant subspaces as $\mathbb{H} = \mathbb{H}^{+}$ $\oplus$ $\mathbb{H}^{-} \oplus \mathbb{H}^{0}$, where $\restr{P}{\mathbb{H}^{+}}>0$, $\restr{P}{\mathbb{H}^{-}}< 0$ and $\restr{P}{\mathbb{H}^{0}}=0$, i.~e. $\mathbb{H}^{0}=\operatorname{Ker}P$.

\begin{proposition}
	\label{PROP: TrivialKernel}
	Suppose that for some $\delta>0$ the inequality
	\begin{equation}
	\label{EQ: LyapunovIneqProp}
	(Au,Pu) \leq -\delta |u|^{2}
	\end{equation}
	holds for all $u \in \mathcal{D}(A)$. If $\mathcal{R}(A)=\mathbb{H}$ (in particular, if $0 \notin \sigma(A)$) then $\operatorname{Ker}(P)=\{ 0 \}$.
\end{proposition}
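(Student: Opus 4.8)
The plan is to argue by contradiction, exploiting the surjectivity of $A$ to test the Lyapunov inequality \eqref{EQ: LyapunovIneqProp} against a cleverly chosen vector. First I would assume $\operatorname{Ker} P \neq \{0\}$ and fix some $w \in \operatorname{Ker} P$ with $w \neq 0$, so that $Pw = 0$. The crucial use of the hypothesis $\mathcal{R}(A) = \mathbb{H}$ is that it lets me realize $w$ itself as a value of $A$: there exists $u \in \mathcal{D}(A)$ with $Au = w$. Since $w \neq 0$, necessarily $u \neq 0$; this nonvanishing is precisely what will make the right-hand side of \eqref{EQ: LyapunovIneqProp} strictly negative and thereby yield the contradiction.

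With this $u$ in hand I would evaluate the bilinear form in \eqref{EQ: LyapunovIneqProp}. Because $Au = w$, the left-hand side equals $(Au, Pu) = (w, Pu)$, and here the self-adjointness of $P$ does all the work: $(w, Pu) = (Pw, u) = (0, u) = 0$. On the other hand, \eqref{EQ: LyapunovIneqProp} applied to the same $u$ gives $(Au, Pu) \leq -\delta |u|^2$. Combining the two yields $0 \leq -\delta |u|^2$, which is impossible since $\delta > 0$ and $u \neq 0$. Hence no nonzero element of $\operatorname{Ker} P$ can exist, i.e. $\operatorname{Ker} P = \{0\}$. The parenthetical case $0 \notin \sigma(A)$ is subsumed, since invertibility of $A$ forces $\mathcal{R}(A) = \mathbb{H}$.

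I expect essentially no analytic obstacle in this argument: one does not even need the bounded inverse of $A$ or its injectivity, but only that $w$ lies in the range of $A$ (so that the test vector $u$ exists) together with the identity $(Au, Pu) = (Pw, u)$ coming from $P = P^{*}$. The only points worth a word of care are the observation that $u \neq 0$, so that the strict inequality $-\delta |u|^2 < 0$ is genuinely available, and the fact that $P$ is a bounded self-adjoint operator defined on all of $\mathbb{H}$, which makes the manipulation $(w, Pu) = (Pw, u)$ legitimate for arbitrary $u, w \in \mathbb{H}$.
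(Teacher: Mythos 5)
Your proof is correct and is essentially the paper's own argument: both use surjectivity of $A$ to write the kernel element as $Au$, then the self-adjointness of $P$ to get $(Au,Pu)=(Pw,u)=0$, and conclude from the Lyapunov inequality that $u=0$. The only cosmetic difference is that you phrase it as a contradiction, while the paper argues directly that any $v\in\operatorname{Ker}(P)$ must be zero.
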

\begin{proof}
	Indeed, let $v \in \operatorname{Ker}(P)$. Then there is $u \in \mathcal{D}(A)$ such that $Au=v$. From \eqref{EQ: LyapunovIneqProp} we get
	\begin{equation}
	(Au,Pu) = (PAu,u) = 0 \leq -\delta |u|^{2}
	\end{equation}
	and, consequently, $u=0$ and $Au=v=0$.
\end{proof}

Inequality \eqref{EQ: LyapunovIneqProp} is called \textit{Lyapunov inequality}. In the applications given in Sections \ref{SEC: DelayedSystems} and \ref{SEC: Parabolic} the operator $P$ will be obtained as a solution to certain operator inequalities that in particular include \eqref{EQ: LyapunovIneqProp}. Proposition \ref{PROP: TrivialKernel} gives a simple criteria for the space $\mathbb{H}^{0}$ to be zero dimensional. In order to study the dimension of $\mathbb{H}^{-}$ the following simple lemmas are useful.

\begin{proposition}
	\label{PROP: EstimatesSubspaces}
	Suppose $\mathbb{H} = \mathbb{H}^{s} \oplus \mathbb{H}^{u}$ with $\dim \mathbb{H}^{u}=j < \infty$. We have the following 
	\begin{enumerate}
		\item[1)] If $\restr{P}{\mathbb{H}^{s}} \geq 0$ then $\dim \mathbb{H}^{-} \leq j$.
		\item[2)] If $\restr{P}{\mathbb{H}^{u}} < 0$ then $\dim \mathbb{H}^{-} \geq j$.
 	\end{enumerate}
\end{proposition}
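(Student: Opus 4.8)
The plan is to prove both inequalities by the same device: exhibit an injective linear map between $\mathbb{H}^{-}$ and one of the model subspaces, and then read off the dimension bound. The only structural facts I would use are that the quadratic form $V(u)=(Pu,u)$ is \emph{strictly} negative on $\mathbb{H}^{-}\setminus\{0\}$ (this is exactly the meaning of $\restr{P}{\mathbb{H}^{-}}<0$ in the spectral splitting $\mathbb{H}=\mathbb{H}^{+}\oplus\mathbb{H}^{-}\oplus\mathbb{H}^{0}$) and that it is nonnegative on $\mathbb{H}^{\geq 0}:=\mathbb{H}^{+}\oplus\mathbb{H}^{0}$, since $\restr{P}{\mathbb{H}^{+}}>0$ and $\restr{P}{\mathbb{H}^{0}}=0$. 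The two items then become a symmetric pair of elementary sign arguments.

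For item 1) I would use the (in general non-orthogonal) projection $\Pi^{u}\colon\mathbb{H}\to\mathbb{H}^{u}$ determined by the algebraic splitting $\mathbb{H}=\mathbb{H}^{s}\oplus\mathbb{H}^{u}$, so that $\ker\Pi^{u}=\mathbb{H}^{s}$. I claim its restriction to $\mathbb{H}^{-}$ is injective. Indeed, if $u\in\mathbb{H}^{-}$ satisfies $\Pi^{u}u=0$, then $u\in\mathbb{H}^{s}$, whence $(Pu,u)\geq 0$ by the hypothesis $\restr{P}{\mathbb{H}^{s}}\geq 0$; but $u\in\mathbb{H}^{-}$ forces $(Pu,u)<0$ unless $u=0$. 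Hence $u=0$, and an injective linear map from $\mathbb{H}^{-}$ into the $j$-dimensional space $\mathbb{H}^{u}$ yields $\dim\mathbb{H}^{-}\leq j$.

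For item 2) I would run the mirror-image argument with the orthogonal projection $\pi^{-}\colon\mathbb{H}\to\mathbb{H}^{-}$, whose kernel is $\mathbb{H}^{\geq 0}$, restricted this time to $\mathbb{H}^{u}$. If $u\in\mathbb{H}^{u}$ satisfies $\pi^{-}u=0$, then $u\in\mathbb{H}^{\geq 0}$, so $(Pu,u)\geq 0$; on the other hand $\restr{P}{\mathbb{H}^{u}}<0$ gives $(Pu,u)<0$ for $u\neq 0$. Thus $u=0$, the map $\restr{\pi^{-}}{\mathbb{H}^{u}}$ is injective, and $j=\dim\mathbb{H}^{u}\leq\dim\mathbb{H}^{-}$.

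The argument is essentially routine, and the one point deserving care is the strictness $(Pu,u)<0$ for nonzero $u\in\mathbb{H}^{-}$, which is genuinely stronger than $(Pu,u)\leq 0$. This is precisely the content of the functional-calculus decomposition recalled at the start of the appendix: $\mathbb{H}^{-}$ is the range of the spectral projection of $P$ onto $(-\infty,0)$, with the kernel of $P$ absorbed into $\mathbb{H}^{0}$, so the spectral measure $d(E_{\lambda}u,u)$ of a nonzero $u\in\mathbb{H}^{-}$ is supported away from $0$ and $(Pu,u)=\int_{(-\infty,0)}\lambda\,d(E_{\lambda}u,u)<0$. Granting this, the sign clash between the two subspaces closes each injectivity claim at once, and no compactness or a priori finite-dimensionality of $\mathbb{H}^{-}$ is needed: in item 1) finiteness of $\dim\mathbb{H}^{-}$ is itself forced by the existence of an injection into the $j$-dimensional space $\mathbb{H}^{u}$.
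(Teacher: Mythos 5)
Your proof is correct and follows essentially the same route as the paper's: the injectivity of the projection onto $\mathbb{H}^{u}$ restricted to $\mathbb{H}^{-}$ in your item 1) is exactly the paper's use of $\mathbb{H}^{-}\cap\mathbb{H}^{s}=\{0\}$ to pull linear dependence of the $\mathbb{H}^{u}$-components back to the original vectors, and the injectivity of $\pi^{-}$ restricted to $\mathbb{H}^{u}$ in your item 2) is the contrapositive of the paper's production of a nonzero vector in $\left(\mathbb{H}^{-}\right)^{\bot}\cap\mathbb{H}^{u}$. The differences are purely presentational; if anything, your explicit spectral-measure justification of the strict inequality $(Pu,u)<0$ for nonzero $u\in\mathbb{H}^{-}$ makes precise a point the paper leaves implicit.
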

\begin{proof}
	1) Suppose $k > j$ and $v_{1},\ldots,v_{k} \in \mathbb{H}^{-}$. For every $l=1,\ldots,k$ there exists a unique representation
	\begin{equation}
	\label{EQ: RepresentationLemma}
	v_{l}=v^{s}_{l}+v^{u}_{l}
	\end{equation}
	with $v^{s}_{l} \in \mathbb{H}^{s}$ and $v^{u}_{l} \in \mathbb{H}^{u}$. Since $k > j = \dim \mathbb{H}^{u}$, the vectors $v^{u}_{1},\ldots, v^{u}_{k}$ are linearly dependent and, consequently, we have 
	\begin{equation}
	\sum_{l=1}^{k}c_{l} v^{u}_{l} = 0
	\end{equation}
	for some numbers $c_{1},\ldots, c_{l}$. From \eqref{EQ: RepresentationLemma} we have
	\begin{equation}
	\sum_{l=1}^{k}c_{l} v_{l} = \sum_{l=1}^{k}c_{l} v^{s}_{l} + \sum_{l=1}^{k}c_{l} v^{u}_{l} = \sum_{l=1}^{k}c_{l} v^{s}_{l} = 0,
	\end{equation}
	where the last equality is due to $\mathbb{H}^{-} \cap \mathbb{H}^{s} = \{ 0 \}$. Therefore every $k > j$ vectors from $\mathbb{H}^{-}$ are linearly dependent. So, $\dim \mathbb{H}^{-} \leq j$.
	
	2) Suppose that $\dim \mathbb{H}^{-} < j$. Then there exists a non-zero vector $v \in \left(\mathbb{H}^{-}\right)^{\bot} \cap \mathbb{H}^{u}$. So $(Pv,v) < 0$ and $(Pv,v) \geq 0$ at the same time. This is a contradiction.
\end{proof}

\begin{proposition}
	\label{PROP: EstimateEigenvaluesWithStable}
	Suppose \eqref{EQ: LyapunovIneqProp} is satisfied and $\mathbb{H}$ splits into the direct sum $\mathbb{H} = \mathbb{H}^{s} \oplus \mathbb{H}^{u}$ of $G(t)$-invariant subspaces $\mathbb{H}^{s}$ and $\mathbb{H}^{u}$ with $\dim \mathbb{H}^{u} = j < \infty$. Then
	\begin{enumerate}
		\item[1)] If $G(t)u_{0} \to 0$ as $t \to +\infty$ for $u_{0} \in \mathbb{H}^{s}$ then $\restr{P}{\mathbb{H}^{s}} > 0$.
		\item[2)] If $G(t)$ is invertible on $\mathbb{H}^{u}$ and  $G(t)u_{0} \to 0$ as $t \to -\infty$ for $u_{0} \in \mathbb{H}^{u}$ then $\restr{P}{\mathbb{H}^{u}} < 0$.
	\end{enumerate}
\end{proposition}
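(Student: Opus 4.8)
The plan is to exploit the Lyapunov inequality \eqref{EQ: LyapunovIneqProp} by recognizing that $V(u):=(Pu,u)$ is a strict Lyapunov function that decreases along every orbit of the semigroup, and then to read off the sign of $V$ at an initial point from the behaviour of the orbit in the appropriate time direction.

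First I would record the differential inequality along trajectories. For $u_{0}\in\mathcal{D}(A)$ the orbit $u(t):=G(t)u_{0}$ stays in $\mathcal{D}(A)$, is differentiable with $\dot u(t)=Au(t)$, and since $P=P^{*}$ one computes $\frac{d}{dt}V(u(t))=2(Au(t),Pu(t))\le-2\delta|u(t)|^{2}$ by \eqref{EQ: LyapunovIneqProp}. Integrating gives $V(u(t))-V(u(s))\le-2\delta\int_{s}^{t}|u(r)|^{2}dr$ for $s\le t$. Because $\mathcal{D}(A)$ is dense in $\mathbb{H}$ and, for fixed $s\le t$, both sides depend continuously on $u_{0}$ (strong continuity and local boundedness of $G(\cdot)$, boundedness of $P$, and dominated convergence for the integral term), this integrated inequality extends to every $u_{0}\in\mathbb{H}$.

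For part 1) I would take $0\neq u_{0}\in\mathbb{H}^{s}$, put $s=0$, and let $t\to+\infty$. Invariance of $\mathbb{H}^{s}$ together with the hypothesis give $u(t)\to0$, hence $V(u(t))\to0$ since $P$ is bounded; passing to the limit yields $V(u_{0})\ge2\delta\int_{0}^{\infty}|u(r)|^{2}dr$. The right-hand side is strictly positive, because $r\mapsto u(r)$ is continuous with $u(0)=u_{0}\neq0$, so the integrand is bounded away from $0$ on a small interval. Thus $(Pu_{0},u_{0})>0$ for every nonzero $u_{0}\in\mathbb{H}^{s}$, i.e. $\restr{P}{\mathbb{H}^{s}}>0$ in the quadratic-form sense required by Proposition \ref{PROP: EstimatesSubspaces}. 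For part 2) I would take $0\neq u_{0}\in\mathbb{H}^{u}$; since $\mathbb{H}^{u}$ is finite-dimensional it lies in $\mathcal{D}(A)$ and $G(\cdot)$ restricts there to $e^{tA_{u}}$ for a matrix $A_{u}$, which by the assumed invertibility defines the orbit for all $t\in\mathbb{R}$. Taking $t=0$ and $s\to-\infty$, the hypothesis gives $u(s)\to0$, so $V(u(s))\to0$ and the integrated inequality yields $V(u_{0})\le-2\delta\int_{-\infty}^{0}|u(r)|^{2}dr<0$, whence $\restr{P}{\mathbb{H}^{u}}<0$.

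The computations themselves (differentiating $V$, the sign of the integral) are routine. The points requiring care, and the ones I expect to be the main obstacle, are the density/continuity extension of the integrated inequality to initial data outside $\mathcal{D}(A)$ needed in part 1), and the backward-in-time extension of the orbit in part 2); the latter is harmless precisely because $\mathbb{H}^{u}$ is finite-dimensional and $G(\cdot)$ is invertible on it, so the orbit is a genuine smooth curve on all of $\mathbb{R}$ to which the differential inequality applies directly.
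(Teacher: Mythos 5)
Your proposal is correct and follows essentially the same route as the paper: differentiate $V(G(t)u_{0})=(PG(t)u_{0},G(t)u_{0})$ on $\mathcal{D}(A)$, integrate the resulting inequality, extend by density, and pass to the limit $t\to+\infty$ (resp. $s\to-\infty$ along the backward orbit in the finite-dimensional invariant subspace $\mathbb{H}^{u}$) to read off the sign of $V$. Your write-up is in fact slightly more careful than the paper's, which leaves the strict positivity of the integral and the backward extension on $\mathbb{H}^{u}$ implicit.
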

\begin{proof}
	1) Put $V(u):=(Pu,u)$. Differentiating\footnote{Since $A$ is the generator of the $C_{0}$-semigroup $G(t)$, the function $t \mapsto G(t)u_{0}$ is continuously differentiable provided $u_{0} \in \mathcal{D}(A)$ \cite{Krein1971}.} $V(G(t)u_{0})$ w. r. t. $t$ for $u_{0} \in \mathcal{D}(A)$, using \eqref{EQ: LyapunovIneqProp}, then integrating it on $[0,t]$ and extending the inequality for all $u_{0} \in \mathbb{H}$ by the continuity we have
	\begin{equation}
	\label{EQ: LyapunovInequality}
	V(G(t)u_{0}) - V(u_{0}) \leq -\delta \int_{0}^{t}|G(s)u_{0}|^{2}ds.
	\end{equation}
	Taking it to the limit in \eqref{EQ: LyapunovInequality} as $t \to \infty$ for $u_{0} \in \mathbb{H}^{s}$ we have
	\begin{equation}
	V(u_{0}) \geq \delta \int_{0}^{\infty} |G(t)u_{0}|^{2}ds.
	\end{equation}
	2) Analogously to $1)$ for $u_{0} \in \mathbb{H}^{u}$ one may deduce
	\begin{equation}
	V(u_{0}) \leq -\delta \int_{-\infty}^{0} |G(t)u_{0}|^{2}ds.
	\end{equation}
\end{proof}




\begin{thebibliography}{00}

\bibitem{Anik2020PB}
Anikushin M.~M. The Poincar\'{e}-Bendixson theory for certain semi-flows in Hilbert spaces, \textit{arXiv preprint} arXiv:2001.08627 (2020).

\bibitem{Anikushin2019+OnCom} Anikushin M. M. On the compactness of solutions to certain operator inequalities arising from the Likhtarnikov-Yakubovich frequency theorem (2020, to appear)

\bibitem{Anikushin2019Vestnik}
Anikushin M. M. On the Liouville phenomenon in estimates of fractal dimensions of forced quasi-periodic oscillations, \textit{Vestnik St. Petersb. Univ. Math.}, \textbf{52}(3), 234--243 (2019).

\bibitem{AnikushinRR2019}
Anikushin M. M., Reitmann V., Romanov A. O. Analytical and numerical estimates of the fractal dimension
of forced quasiperiodic oscillations in control systems, \textit{Differential Equations and Control Processes} (Differencialnie Uravnenia i Protsesy Upravlenia), \textbf{87}(2) (2019).

\bibitem{Anikushin2019ND}
Anikushin M. M. On the Smith reduction theorem for almost periodic ODEs satisfying the squeezing property, \textit{Rus. J. Nonlin. Dyn.}, \textbf{15}(1), 97--108 (2019).

\bibitem{ArovYakub1982}
Arov D. Z. Yakubovich V. A. Semiboundedness conditions for quadratic functionals on Hardy spaces, \textit{Vestn. Leningr. Univ., Ser. Mat. Mekh. Astron.}, 1, 7--13 (1982).

\bibitem{BatkaiPiazzera2005}
B\'{a}tkai A., and Piazzera S. \textit{Semigroups for Delay Equations}. A K Peters, Wellesley (2005).

\bibitem{Cartwright1969}
Cartwright M. L. Almost periodic differential equations and almost periodic flows, \textit{J. Differ. Equations}, \textbf{5}, 167–181 (1969).

\bibitem{CarvalhoLangaRobinson2012}
Carvalho A., Langa J. A., Robinson J. \textit{Attractors for Infinite-Dimensional Non-Autonomous Dynamical Systems}. Springer Science \& Business Media (2012).

\bibitem{Cheban2020Book}
Cheban D. N. \textit{Nonautonomous Dynamics: Nonlinear oscillations and Global attractors}, Springer Nature (2019).

\bibitem{Cheban2019}
Cheban D., Liu Z. Poisson stable motions of monotone nonautonomous dynamical systems, \textit{Science China Mathematics}, \textbf{62}(7), 1391–1418 (2019).

\bibitem{Faheem1987} Faheem M., Rao M. R. M. Functional differential equations of delay type and nonlinear evolution in $L_{p}$-spaces, \textit{J. Math. Anal. and Appl.}, \textbf{123}(1), 73--103 (1987).

\bibitem{FengWangWu2017}
Feng L., Wang Yi and Wu J. Semiflows “Monotone with Respect to High-Rank Cones" on a Banach Space. \textit{SIAM J. Math. Anal.}, \textbf{49}(1), 142--161 (2017).

\bibitem{Gelig1978}
Gelig A. Kh., Leonov G. A., Yakubovich V. A. \textit{Stability of Nonlinear Systems with Non-Unique Equilibrium State}. Nauka, Moscow (1978).

\bibitem{Hale}
Hale J. K., Lunel S. M. V. \textit{Introduction to Functional Differential Equations}. Springer Science \& Business Media (1993).

\bibitem{Helemskii2006}
Helemskii A. Ya. \textit{Lectures and Exercises on Functional Analysis}. Providence, RI: American Mathematical Society, 2006.

\bibitem{KalininReitmann2012}
Kalinin Yu. N., Reitmann V. Almost periodic solutions in control systems with monotone nonlinearities. \textit{Differential Equations and Control Processes} (Differencialnie Uravnenia i Protsesy Upravlenia), \textbf{61}(4) (2012).

\bibitem{Krein1971}
Krein S. G. \textit{Linear Differential Equations in Banach Space}, AMS, (1971).

\bibitem{KuzLeoReit2019}
Kuznetsov N. V., Leonov G. A., Reitmann V. \textit{Attractor Dimension Estimates for Dynamical Systems: Theory and Computation}. Switzerland: Springer International Publishing AG (2020).

\bibitem{Lasiecka1983}
Lasiecka I., Triggiani R. Stabilization of Neumann boundary feedback of parabolic equations: The case of trace in the feedback loop, \textit{Applied Mathematics and Optimization}, \textbf{10}(1), 307--350 (1983).

\bibitem{LeoBurShep1996}
Leonov G. A., Burkin I. M., Shepeljavyi A. I. \textit{Frequency Methods in Oscillation Theory}. Kluwer Academic Publishers (1996).

\bibitem{LevitanZhikov1982}
Levitan B. M., Zhikov V. V. \textit{Almost Periodic Functions and Differential Equations}. CUP Archive (1982).

\bibitem{Likhtarnikov1977}
Likhtarnikov A. L., Yakubovich V. A. The frequency theorem for continuous one-parameter semigroups. \textit{Math. USSR-Izv.} \textbf{41}(4), 895--911 (1977) [in Russian].

\bibitem{Likhtarnikov1976}
Likhtarnikov A. L., Yakubovich V. A. The frequency theorem for equations of evolutionary type, \textit{Sib. Math. J.}, \textbf{17}(5), 790--803 (1976).

\bibitem{Lions1971}
Lions J. L. \textit{Optimal Control of Systems Governed by Partial Differential Equations}, Springer-Verlag (1971).

\bibitem{Liu2003}
Liu W. Boundary feedback stabilization of an unstable heat equation, \textit{SIAM journal on control and optimization}. \textbf{42}(3), 1033--1043 (2003).

\bibitem{LouisWexler1991}
Louis J.-Cl., Wexler D. The Hilbert space regulator problem and operator Riccati equation under stabilizability, \textit{Annales de la Soci\'{e}t\'{e} Scientifique de Bruxelles}, \textbf{105}(4), 137--165 (1991).

\bibitem{Marcus1989}
Marcus C. M., Westervelt R. M. Stability of analog neural networks with delay, \textit{Phys. Rev. A}, \textbf{39}(1), 347--359 (1989).

\bibitem{Massera1950}
Massera J. L. The existence of periodic solutions of systems of differential equations. \textit{Duke Math. J.}, \textbf{17}(4), 457--475 (1950).

\bibitem{Pliss1966}
Pliss V. A. \textit{Nonlocal Problems of the Theory of Oscillations}, Academic Press, New York, (1966).

\bibitem{Popov2014}
Popov S., Reitmann V. Frequency domain conditions for finite-dimensional projectors and determining observations for the set of amenable solutions. \textit{Discrete \& Continuous Dynamical Systems-A.}, \textbf{34}(1), 249--267 (2014).

\bibitem{Smith1994}
Smith R. A. Orbital stability and inertial manifolds for certain reaction diffusion systems. \textit{P. Lond. Math. Soc.}, \textbf{3}(1), 91--120 (1994).

\bibitem{Smith1990}
Smith R. A. Convergence theorems for periodic retarded functional differential equations, \textit{P. Lond. Math. Soc.}, \textbf{3}(3), 581--608 (1990).

\bibitem{Smith1986}
Smith R. A. Massera's convergence theorem for periodic nonlinear differential equations, \textit{J. Math. Anal. and Appl.}, \textbf{120}(2), 679--708 (1986).

\bibitem{Webb1976}
Webb G. F. Functional differential equations and nonlinear semigroups in $L^{p}$-spaces. \textit{J. Differ. Equations}, \textbf{20}(1), 71--89 (1976).

\bibitem{Webb1981}
Webb G. F., Badii M. Nonlinear nonautonomous functional differential equations in $L^{p}$ spaces, \textit{Nonlinear Anal-Theor.}, \textbf{5}(2), 203--223 (1981).

\end{thebibliography}


\end{document}